\newtheorem{theorem}{Theorem}[section]
\newtheorem{lemma}[theorem]{Lemma}
\newtheorem{corollary}[theorem]{Corollary}
\theoremstyle{definition}
\numberwithin{equation}{section}
\author[X. Tao]{Xiangxing Tao}
\address{Xiangxing Tao: Department of Mathematics, School of Science, Zhejiang University of Science and Technology,
Hangzhou 310023, People's Republic of China}
\email{xxtao@zust.edu.cn}
\author[G. Hu]{Guoen Hu}
\address{Guoen Hu:  Department of Mathematics, School of Science, Zhejiang University of Science and Technology,
Hangzhou 310023, People's Republic of China}
\email{guoenxx@163.com}
\thanks{The research of the first author was supported by the NNSF of
China under grant \#12071437}
\thanks{Guoen Hu is the corresponding author}
\keywords{rough singular integral operator, maximal operator, bilinear sparse dominate, quantitative weighted bound, approximation}
\subjclass[2010]{42B20}
\begin{document}
\title[rough operator]{A bilinear sparse domination for the  maximal  singular integral operators with rough kernels}

\begin{abstract}
Let $\Omega$ be homogeneous of degree zero, integrable on $S^{d-1}$ and   have mean value zero, $T_{\Omega}$  be the homogeneous singular integral operator  with kernel $\frac{\Omega(x)}{|x|^d}$ and $T_{\Omega}^*$ be the maximal operator associated to $T_{\Omega}$. In this paper, the authors prove  that if $\Omega\in L^{\infty}(S^{d-1})$, then for all $r\in (1,\,\infty)$, $T_{\Omega}^*$ enjoys a $(L^\Phi,\,L^r)$ bilinear sparse domination with $\Phi(t)=t\log\log ({\rm e}^2+t)$. Some applications of this bilinear sparse domination are also given.
\end{abstract}
\maketitle

\section{Introduction}
In this paper, we will work on $\mathbb{R}^d$, $d\geq 2$. Let $\Omega$ be
homogeneous of degree zero, integrable on
the unit sphere ${S}^{d-1}$ and have mean value zero. Define the  singular integral operator
${T}_{\Omega}$ by
\begin{eqnarray}\label{eq1.1}{T}_{\Omega}f(x)={\rm p.\,v.}\int_{\mathbb{R}^d} \frac
{\Omega( y')}{|y|^d}f(x-y)dy,\end{eqnarray}  where and in the following, $y'=y/|y|$ for $y\in\mathbb{R}^d$. This operator was introduced by
Calder\'on and Zygmund \cite{cz1}, and then studied by many authors
in the last sixty years.  Calder\'on and Zygmund \cite {cz2} proved that
if $\Omega\in L\log L({S}^{d-1})$, then $T_{\Omega}$ is bounded on
$L^p(\mathbb{R}^d)$ for $p\in (1,\,\infty)$.   Ricci
and Weiss \cite{rw} improved the result of Calder\'on and Zygmund, and
showed that $\Omega\in H^1(S^{d-1})$ guarantees the
$L^p(\mathbb{R}^d)$ boundedness on $L^p(\mathbb{R}^d)$ for $p\in
(1,\,\infty)$. Seeger \cite{se} showed that $\Omega\in L\log
L(S^{d-1})$ is a sufficient condition such that $T_{\Omega}$ is bounded
from $L^1(\mathbb{R}^d)$ to $L^{1,\,\infty}(\mathbb{R}^d)$, see also \cite{chr2}.

Let   $A_p(\mathbb{R}^d)$ $(p\in [1,\,\infty))$ be the weight function class of Muckenhoupt, that is, $$A_{p}(\mathbb{R}^d)=\{w: \, w\,\hbox{is\,\,nonnegative\,\,and\,\,locally\,\,integrable\,\,in}\,\,\mathbb{R}^d,\,[w]_{A_p}<\infty\},$$
where and in what follows, $$[w]_{A_p}:=\sup_{Q}\Big(\frac{1}{|Q|}\int_Qw(x)dx\Big)\Big(\frac{1}{|Q|}\int_{Q}w^{1-p'}(x)dx\Big)^{p-1},\,\,\,p\in (1,\,\infty),$$
and
$$[w]_{A_1}:=\sup_{x\in\mathbb{R}^d}\frac{Mw(x)}{w(x)},$$
with  $M$   the Hardy-Littlewood maximal operator.  Given $w\in A_p(\mathbb{R}^d)$, $1\leq p<\infty$, the constant $[w]_{A_p}$ just defined is known the $A_p$ constant of $w$ (see  \cite[Chapter 9]{graf1} for the properties of $A_p(\mathbb{R}^d)$). For $w\in \cup_{p>1}A_p(\mathbb{R}^d)$,
$[w]_{A_{\infty}}$,  the $A_{\infty}$ constant of $w$,  is defined by
$$[w]_{A_{\infty}}=\sup_{Q\subset \mathbb{R}^d}\frac{1}{w(Q)}\int_{Q}M(w\chi_Q)(x)dx,$$
see \cite{fuj} or \cite{wil}. Duoandikoetxea and Rubio de Francia \cite{drf} considered the   boundedness on weighted $L^p$ spaces with $A_p$ weights  for $T_{\Omega}$ when $\Omega\in L^q(S^{d-1})$ for some $q\in (1,\,\infty)$, see also
\cite{wat}.
In the last
decade, considerable attention has been paid to the quantitative weighted bounds for the operator $T_{\Omega}$. The first work in this area is the paper of  Hyt\"onen, Roncal  and Tapiola
\cite{hyta}.
By a quantitative weighted estimate for the Calder\'on-Zygmund operators,  approximation to the identity and interpolation, Hyt\"onen et al.
(see Theorem 1.4 in \cite{hyta}) proved that
\begin{theorem}\label{dingli1.1} Let $\Omega$ be homogeneous of degree zero,   have mean value zero on $S^{d-1}$ and $\Omega\in L^{\infty}(S^{d-1})$.
Then for $p\in (1,\,\infty)$ and $w\in A_{p}(\mathbb{R}^d)$,
\begin{eqnarray}\label{eq:1.estimate}\|T_{\Omega}f\|_{L^p(\mathbb{R}^d,\,w)}&\lesssim & \|\Omega\|_{L^{\infty}(S^{d-1})}[w]_{A_p}^{\frac{1}{p}}\max\{[w]_{A_{\infty}}^{\frac{1}{p'}},\,[w^{1-p'}]_{A_{\infty}}^{\frac{1}{p}}\}\\
&&\times\max\{[w]_{A_{\infty}},\,[w^{1-p'}]_{A_{\infty}}\}
\|f\|_{L^p(\mathbb{R}^d,\,w)}.\nonumber
\end{eqnarray}
\end{theorem}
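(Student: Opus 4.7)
The plan is to follow the strategy of Hytönen, Roncal, and Tapiola: approximate $T_{\Omega}$ by standard Calderón-Zygmund operators with controlled constants, apply a sharp weighted bound to each approximant, and balance the approximation error against the growth of these constants. The key input is the Hytönen-Pérez $A_p$-$A_\infty$ estimate, which says that for any Calderón-Zygmund operator $T$ with Calderón-Zygmund constant $C_T$,
\[\|Tf\|_{L^p(\mathbb{R}^d,w)}\lesssim C_T\,[w]_{A_p}^{1/p}\max\{[w]_{A_\infty}^{1/p'},\,[w^{1-p'}]_{A_\infty}^{1/p}\}\|f\|_{L^p(\mathbb{R}^d,w)}.\]

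For the construction I would start from a smooth dyadic partition $\Omega(y')/|y|^d=\sum_{j\in\mathbb{Z}} K_j(y)$ with $\mathrm{supp}\,K_j\subset\{2^j\le|y|<2^{j+2}\}$, and for each integer $s\ge 1$ produce a smoother kernel $K_j^s$ by convolving $K_j$ with a bump at scale $2^{j-s}$; set $T^sf=\sum_j K_j^s\ast f$ and $R^s=T_\Omega-T^s$. A direct size and smoothness computation shows that $T^s$ is a Calderón-Zygmund operator with $C_{T^s}\lesssim s\|\Omega\|_{L^\infty(S^{d-1})}$, while a Fourier estimate on each $\widehat{K_j-K_j^s}$ in the spirit of Duoandikoetxea and Rubio de Francia \cite{drf} yields the unweighted decay $\|R^sf\|_{L^2}\lesssim 2^{-\delta s}\|\Omega\|_{L^\infty}\|f\|_{L^2}$ for some $\delta>0$, together with a weighted $L^2(w)$ version whose constant grows polynomially in $[w]_{A_2}$.

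Applying the $A_p$-$A_\infty$ estimate to $T^s$ gives
\[\|T^sf\|_{L^p(w)}\lesssim s\|\Omega\|_{L^\infty}[w]_{A_p}^{1/p}\max\{[w]_{A_\infty}^{1/p'},[w^{1-p'}]_{A_\infty}^{1/p}\}\|f\|_{L^p(w)},\]
while Rubio de Francia extrapolation from the weighted $L^2$ decay of $R^s$ produces a weighted $L^p$ bound with decay $2^{-\delta' s}$ and a prefactor polynomial in $\max\{[w]_{A_\infty},[w^{1-p'}]_{A_\infty}\}$. Choosing $s$ on the order of $\log\max\{[w]_{A_\infty},[w^{1-p'}]_{A_\infty}\}$ balances the two contributions and yields \eqref{eq:1.estimate}. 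The principal obstacle is the last step: one must track the constants in the extrapolation carefully so that the polynomial prefactor for $R^s$, combined with the linear $s$-growth in the $T^s$ estimate, produces exactly one additional power of $\max\{[w]_{A_\infty},[w^{1-p'}]_{A_\infty}\}$ beyond the $A_p$-$A_\infty$ factor, rather than a higher power, which is what pins down the precise form of the stated bound.
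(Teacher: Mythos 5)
The paper does not prove Theorem~\ref{dingli1.1} itself; it quotes it from Theorem~1.4 of Hyt\"onen--Roncal--Tapiola \cite{hyta}, describing their argument as resting on ``a quantitative weighted estimate for the Calder\'on--Zygmund operators, approximation to the identity and interpolation.'' Your skeleton---write $T_\Omega=T^s+R^s$ with $T^s f=\sum_j (K_j*\psi_{j-s})*f$, apply the sharp $A_p$--$A_\infty$ theorem to $T^s$ with Dini constant $\lesssim s$, control $R^s$ via its $L^2$ decay, and then optimize in $s$---is the right framework. However, your control of the remainder $R^s$ has a real gap, and it is precisely the point that produces the extra polynomial factor $\max\{[w]_{A_\infty},[w^{1-p'}]_{A_\infty}\}$ in \eqref{eq:1.estimate}.

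You assert that the Fourier computation that gives $\|R^s\|_{L^2\to L^2}\lesssim 2^{-\delta s}$ also yields ``a weighted $L^2(w)$ version whose constant grows polynomially in $[w]_{A_2}$,'' and that extrapolation then gives $\|R^s\|_{L^p(w)\to L^p(w)}\lesssim 2^{-\delta' s}\,\mathrm{poly}([w])$ with a \emph{weight-independent} exponent $\delta'>0$. That is not available: Plancherel does not act on $L^2(w)$, and no Fourier argument produces a weighted decay estimate with the same geometric rate. What Hyt\"onen--Roncal--Tapiola actually do is interpolate with change of measure between the unweighted $L^2$ decay and a weighted bound for $R^s$ that carries \emph{no} decay at all (coming from bounding $T_\Omega$ and $T^s$ separately, with weight constant polynomial in $[w]_{A_p}$). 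The reverse H\"older inequality permits $w^{1+\varepsilon}\in A_p$ only for $\varepsilon\sim 1/\max\{[w]_{A_\infty},[w^{1-p'}]_{A_\infty}\}$, so after the change-of-measure interpolation the decay rate one actually obtains has the form
\[
\|R^s\|_{L^p(w)\to L^p(w)}\ \lesssim\ 2^{-c\delta s/\max\{[w]_{A_\infty},[w^{1-p'}]_{A_\infty}\}}\cdot \mathrm{poly}\bigl([w]_{A_p}\bigr),
\]
i.e., the exponential rate degrades by a factor of the $A_\infty$ constants. To absorb the polynomial prefactor one is therefore forced to take $s$ of order $\max\{[w]_{A_\infty},[w^{1-p'}]_{A_\infty}\}$ (up to an inessential logarithm), not $\log\max\{\cdots\}$ as you propose. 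Feeding $s\sim\max\{[w]_{A_\infty},[w^{1-p'}]_{A_\infty}\}$ into $C_{T^s}\lesssim s$ is exactly what produces the final factor $\max\{[w]_{A_\infty},[w^{1-p'}]_{A_\infty}\}$ in \eqref{eq:1.estimate}. Note that your own choice $s\sim\log\max\{\cdots\}$ is internally inconsistent with the stated conclusion: combined with $C_{T^s}\lesssim s$ it would give only a \emph{logarithm} of the $A_\infty$ constants in place of the linear factor, which is stronger than \eqref{eq:1.estimate} and not something the argument delivers; the last sentence of your proposal already signals that you have not located where the missing power comes from. The missing idea is the Stein--Weiss interpolation with change of measure together with the reverse H\"older inequality, which replaces the step you attribute to Fourier estimates and extrapolation.
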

For a family of cubes $\mathcal{S}$, we say that $\mathcal{S}$ is $\eta$-sparse, $\eta\in (0,\,1)$, if  for each fixed $Q\in \mathcal{S}$, there exists a measurable subset $E_Q\subset Q$, such that $|E_Q|\geq \eta|Q|$ and $\{E_{Q}\}$ are pairwise disjoint.

Let $\Phi_1,\,\Phi_2$ be two Young functions, and $\mathcal{S}$ be a sparse family. Associated with $\Phi_1,\,\Phi_2$  and $\mathcal{S}$, the bilinear sparse operator $\mathcal{A}_{\mathcal{S},\,\Phi_1,\,\Phi_2}$ is defined  by
$$\mathcal{A}_{\mathcal{S},\,\Phi_1,\,\Phi_2}(f_1,\,f_2)=\sum_{Q\in\mathcal{S}}\langle |f_1|\rangle_{\Phi_1,Q}\langle |f_2|\rangle_{\Phi_2,Q}|Q|.
$$
where and in what follows, $\langle |f|\rangle_{\Phi_1,\,Q}$ is the Luxemburgh norm defined by
$$\langle |f|\rangle_{\Phi_1,Q}=\inf\Big\{\lambda>0:\,\frac{1}{|Q|}\int_{Q}\Phi_1\big(\frac{|f(x)|}{\lambda}\big)dx\leq 1\Big\},$$
see \cite{rr}. For the case of $\Phi_1(t)=t^p$, we denote  $\langle f\rangle_{\Phi_1,\,Q}$ by $\langle f\rangle_{p,Q}$  for simplicity.

Conde-Alonso,   Culiuc,  Di Plinio and  Ou   \cite{ccdo}  proved that for all $p\in (1,\,\infty)$,  bounded functions $f$ and $g$ with compact supports,
$$
\Big|\int_{\mathbb{R}^d}g(x)T_{\Omega}f(x)dx\Big|\lesssim p'\|\Omega\|_{L^{\infty}(S^{d-1})}\sup_{\mathcal{S}}\mathcal{A}_{\mathcal{S},\,L^1,L^p}(f,\,g),$$
where the supremum is take over sparse families.
Using this bilinear sparse domination  and  some new estimates for bilinear sparse operators, Li, P\'erez, Rivera-Rios and  Roncal \cite{lpr} proved that
\begin{eqnarray*}\|T_{\Omega}f\|_{L^p(\mathbb{R}^d,\,w)}&\lesssim & \|\Omega\|_{L^{\infty}(S^{d-1})}[w]_{A_p}^{\frac{1}{p}}\max\{[w]_{A_{\infty}}^{\frac{1}{p'}},\,[w^{1-p'}]_{A_{\infty}}^{\frac{1}{p}}\}\\
&&\times\min\{[w]_{A_{\infty}},\,[w^{1-p'}]_{A_{\infty}}\}
\|f\|_{L^p(\mathbb{R}^d,\,w)},\nonumber
\end{eqnarray*}
which improved (\ref{eq:1.estimate}). Moreover, Li et al. \cite{lpr} proved that for any $w\in A_1(\mathbb{R}^d)$,
\begin{eqnarray*}\|T_{\Omega}f\|_{L^{1,\,\infty}(\mathbb{R}^d,\,w)}&\lesssim & \|\Omega\|_{L^{\infty}(S^{d-1})}[w]_{A_1}[w]_{A_{\infty}}\log ({\rm e}+[w]_{A_{\infty}})
\|f\|_{L^1(\mathbb{R}^d,\,w)},
\end{eqnarray*}
and established the weighted inequality of Coifman-Fefferman type that for $p\in [1,\,\infty)$ and $w\in A_{\infty}(\mathbb{R}^d)$, $$\|T_{\Omega}f\|_{L^p(\mathbb{R}^d,\,w)}\lesssim\|\Omega\|_{L^{\infty}(S^{d-1})}
[w]_{A_{\infty}}^2\|Mf\|_{L^p(\mathbb{R}^d,\,w)}.$$

Now let $T_{\Omega}^*$ be the maximal singular integral operator associated to $T_{\Omega}$, that is,
$$T_{\Omega}^*f(x)=\sup_{\epsilon>0}\Big|\int_{|y|>\epsilon}\frac{\Omega(y)}{|y|^d}f(x-y)dy\Big|.
$$
Also, $\Omega\in L\log L(S^{d-1})$ is a sufficient condition such that $T_{\Omega}^*$ is bounded on $L^p(\mathbb{R}^d)$ for all $p\in (1,\,\infty)$ (see \cite{cz2}). Duoandikoetxea and Rudio de Francia \cite{drf} proved that if $\Omega\in L^{\infty}(S^{d-1})$, then $T_{\Omega}^*$ is bounded on $L^p(\mathbb{R}^d,\,w)$ for all $p\in (1,\,\infty)$ and $w\in A_p(\mathbb{R}^d)$. For a long time, the weak type endpoint estimate for $T_{\Omega}^*$ is an open problem even for $\Omega\in L^{\infty}(S^{d-1})$. Honz\'{i}k \cite{hon} established a local weak type endpoint estimate for $T_{\Omega}^*$ when $\Omega\in L^{\infty}(S^{d-1})$.  Bhojak and Mohanty \cite{bhmo} improved the result of Honz\'{i}k, and proved the following result.
\begin{theorem}\label{dingliweak}
Let $\Omega$ be homogeneous of degree zero, have mean value zero on $S^{d-1}$, and $\Omega\in L^{\infty}(S^{d-1})$. Then for all $\alpha>0$,
\begin{eqnarray}\label{eq1.weak}
&&w\big(\{x\in\mathbb{R}^d:\,T_{\Omega}^*f(x)>\alpha\}\big)\\
&&\quad\lesssim [w]_{A_1}[w]_{A_{\infty}}\log ({\rm e}+[w]_{A_{\infty}})
\int_{\mathbb{R}^d}\Phi\big(\frac{|f(x)|}{\alpha}\big)dx.\nonumber
\end{eqnarray}
Throughout this paper,  $\Phi(t)=t\log\log ({\rm e}^2+t)$.
\end{theorem}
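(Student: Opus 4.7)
The plan is to combine a weighted Calder\'on-Zygmund decomposition of $f$ with a Seeger-type microlocal analysis of the kernel, tracking carefully the dependence on $[w]_{A_1}$, $[w]_{A_\infty}$, and on the Orlicz modular of $f$.

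First, I would perform a Calder\'on-Zygmund decomposition of $f$ at level $c\alpha$ for a small absolute constant $c$, obtaining $f = g + \sum_{Q} b_Q$ with $\|g\|_{L^\infty}\lesssim \alpha$ and each $b_Q$ supported on a dyadic cube $Q$, of mean zero, and satisfying $\|b_Q\|_{L^1}\lesssim \alpha |Q|$. The weighted measure of the exceptional set $E := \bigcup 5Q$ is controlled by
\[
w(E)\lesssim [w]_{A_1}\sum_Q|Q| \lesssim [w]_{A_1}\int_{\mathbb R^d}\Phi\bigl(|f(x)|/\alpha\bigr)\,dx,
\]
which already produces the $[w]_{A_1}$-factor. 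On the good part $g$, the $L^2$-boundedness of $T_\Omega^*$ (Duoandikoetxea-Rubio de Francia) together with an $A_\infty$-weighted $L^2$ inequality of Fefferman-Stein type yields a term of the right order.

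The core task is to bound the contribution of $b = \sum_Q b_Q$ outside $E$. I would decompose $T_\Omega$ into kernel-scale pieces $T_\Omega = \sum_{j\in\mathbb Z} T_{\Omega,j}$ where $T_{\Omega,j}$ has kernel supported in $\{|y|\sim 2^j\}$, and use the elementary identity $T_\Omega^*f(x) = \sup_N\bigl|\sum_{j\ge N}T_{\Omega,j}f(x)\bigr|$. Each $b_Q$ (at dyadic scale $2^s$) is then paired against $T_{\Omega,j}$: when $|j-s|$ is large, Seeger-type estimates on $\widehat{\Omega\,d\sigma}$ yield geometric $L^2$-decay $2^{-\delta|j-s|}$ using $\Omega\in L^\infty$ and the zero-mean property of $b_Q$; when $|j-s|$ is small, a $TT^*$ computation together with classical $L^2$-orthogonality suffices. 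Transferring these unweighted $L^2$-estimates to weighted ones via the quantitative reverse H\"older inequality (with constant governed by $[w]_{A_\infty}\log(e+[w]_{A_\infty})$ in the Hyt\"onen-P\'erez form) and summing in $j$ introduces exactly the factor $[w]_{A_\infty}\log(e+[w]_{A_\infty})$ on the right-hand side.

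The principal obstacle, and the only real difference with the weak endpoint of $T_\Omega$ treated in \cite{lpr}, is the supremum over truncation parameters. A na\"ive union bound over truncation levels costs a full logarithm in the input and yields only $L\log L$ on the right. The refined analysis must show that the truncation parameter can be essentially synchronized with the kernel-scale decomposition, the surviving error being controlled pointwise by a Hardy-Littlewood maximal function. This synchronization reduces the extra cost to a \emph{double} logarithm, which is precisely the origin of the Young function $\Phi(t) = t\log\log(e^2+t)$ appearing in the theorem; combined with the weight tracking above, this gives the claimed bound.
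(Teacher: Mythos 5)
Your proposal takes a genuinely different route from the paper: you attempt a direct weighted Calder\'on--Zygmund argument, whereas the paper derives Theorem~\ref{dingliweak} as a corollary of the bilinear sparse domination of Theorem~\ref{dingli1.3} (via the abstract transfer principle of Theorem~\ref{thm4.1}). A direct route is not unreasonable in principle --- it is essentially the Bhojak--Mohanty strategy --- but as written your sketch has two gaps that go to the heart of the theorem.

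First, the mechanism you propose for the $\log\log$ is wrong. You claim it comes from ``synchronizing'' the truncation parameter with the kernel-scale decomposition, with a surviving pointwise error dominated by $Mf$. Synchronizing $\sup_k$ with the kernel scales is indeed a necessary reduction (it is how one passes to the lacunary maximal operator $T_\Omega^{**}$ and to the maximal operators $\mathcal M_{T_l^{**}}$ of Lemma~\ref{lem2.6}), but that step alone costs nothing more than a factor $l$ for each fixed approximation scale $l$; it does not, by itself, produce a $\log\log$. The double logarithm arises from an entirely separate layer: the bad part $b$ must be stratified by height into pieces $b_P^l = f\chi_{P(2^{c_12^{l-1}}\lambda^{-1}<|f|\leq 2^{c_12^l}\lambda^{-1})}$ (note the double-exponential spacing in $l$), and for each level $l$ one approximates the kernels by mollification at scale $2^{l+N_0}$, using the crucial $L^2$ gain $\|T_\Omega - T_{2^{l}}\|_{L^2\to L^2}\lesssim 2^{-\kappa 2^l}$ from Lemma~\ref{lem2.4}. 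Balancing the $L^2$ size of $G^l$ (which is $\sim 2^{c_12^l}$) against this geometric decay $2^{-\kappa 2^l}$ forces $c_1<\kappa$, and the counting of levels $l$ attached to each point is $\sim\log\log(\mathrm{e}^2+|f|)$, which is where $\Phi$ comes from. Your sketch never introduces this level-set decomposition, so nothing in it actually yields the Young function $\Phi$.

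Second, you attribute the factor $[w]_{A_\infty}\log(\mathrm{e}+[w]_{A_\infty})$ to the sharp reverse H\"older inequality. The Hyt\"onen--P\'erez reverse H\"older theorem gives an exponent gain $\varepsilon\sim 1/[w]_{A_\infty}$ with no logarithm attached; pushing an unweighted $L^2$ bound through reverse H\"older does not produce this $\log$. In the paper (and in Li--P\'erez--Rivera-R\'ios--Roncal and Hu--Lai, which this theorem's derivation mimics), the logarithm arises from the $r'$-dependence of the bilinear sparse bound $r'\mathcal A_{\mathcal S,L^1,L^r}+\mathcal A_{\mathcal S,L^\Phi,L^r}$: one optimizes the choice $r = 1 + c/[w]_{A_\infty}$, which makes $\langle|g|\rangle_{r,Q}$ comparable to $\langle|g|\rangle_Q$ at a multiplicative cost and turns $r'$ into $[w]_{A_\infty}$, with the extra $\log(\mathrm{e}+[w]_{A_\infty})$ generated by the Orlicz bump. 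Without this machinery, or an equivalent sharp maximal-function optimization, there is no reason your sketch closes with the stated constant.

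In short: the high-level structure (CZ decomposition, scale decomposition of the kernel, good/bad split) is a plausible alternative to the sparse route, but the two quantitatively sharp ingredients --- the double-exponential stratification of $b$ with matched mollification scales, and the optimization in $r$ that converts an $r'$-sparse bound into $[w]_{A_\infty}\log(\mathrm e+[w]_{A_\infty})$ --- are missing, and the reasons you give for the $\log\log$ and for the weight constant are not the actual ones.
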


Di Plinio, Hyt\"onen and Li \cite{dhl} established a bilinear sparse domination for $T_{\Omega}^*$ when $\Omega\in L^{\infty}(S^{d-1})$. They proved that
\begin{theorem}\label{dingli1.3origin}
Let $\Omega$ be homogeneous of degree zero, have mean value zero on $S^{d-1}$, and $\Omega\in L^{\infty}(S^{d-1})$.   Then for each $p\in (1,\,\infty)$, bounded functions $f$ and $g$ with compact supports, there exists a sparse family $\mathcal{S}$, such that  $$\Big|\int_{\mathbb{R}^d}g(x)T_{\Omega}f(x)dx\Big|\lesssim p'\|\Omega\|_{L^{\infty}(S^{d-1})}\sup_{\mathcal{S}}\mathcal{A}_{\mathcal{S},\,L^p,L^p}(f,\,g).$$
\end{theorem}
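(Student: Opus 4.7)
My plan is to follow the Conde-Alonso–Culiuc–Di Plinio–Ou strategy from \cite{ccdo}, which in fact already yields the strictly stronger $(L^1,L^p)$ version of the sparse domination; the $(L^p,L^p)$ conclusion stated here then follows immediately from the pointwise comparison $\langle |f|\rangle_{1,Q}\leq \langle |f|\rangle_{p,Q}$ valid on every cube $Q$ by H\"older's inequality. For a self-contained sketch I would nevertheless carry out the argument directly in the symmetric $(L^p,L^p)$ form, since this is the template the later results in the paper will adapt to $T_{\Omega}^*$.

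First I would decompose the kernel dyadically: $K(y)=\Omega(y')/|y|^d=\sum_{j\in\mathbb{Z}}K_j(y)$ with $K_j$ supported in $\{2^j\leq |y|<2^{j+1}\}$, and set $T_jf=K_j*f$. The main analytic ingredient is the uniform bound
$$\Big\|\sum_{j\in F}T_j\Big\|_{L^p\to L^p}\lesssim p'\,\|\Omega\|_{L^\infty(S^{d-1})}, \qquad F\subset\mathbb{Z}\ \text{finite},$$
with constant independent of $F$. This follows from the Duoandikoetxea–Rubio de Francia splitting of $T_j$ into a smooth and a cancellative part, combined with a Littlewood–Paley and Rubio de Francia extrapolation that produces the linear growth in $p'$ as $p\to 1^+$.

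Next I would set up the standard recursive stopping-time construction. Fix a cube $Q_0$ containing $\operatorname{supp}f\cup\operatorname{supp}g$ and define the exceptional set
$$E_{Q_0}=\{x\in Q_0:M_pf(x)>C\langle |f|\rangle_{p,3Q_0}\}\cup\{x\in Q_0:M_pg(x)>C\langle |g|\rangle_{p,3Q_0}\},$$
where $M_ph=M(|h|^p)^{1/p}$; choosing $C$ large enough via the weak-type bound for $M$, we get $|E_{Q_0}|\leq\tfrac12|Q_0|$. Let $\{Q_j^{(1)}\}$ be the maximal dyadic sub-cubes of $Q_0$ lying inside $E_{Q_0}$. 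These are the first-generation stopping cubes; iterating the construction inside each $Q_j^{(1)}$ produces a $\tfrac12$-sparse family $\mathcal{S}$ in the sense of the definition in the excerpt.

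To bound $\int_{Q_0}g\,T_\Omega f$ I would split $T_\Omega=T_\Omega^{\mathrm{in}}+T_\Omega^{\mathrm{out}}$ at the scale of $Q_0$, with $T_\Omega^{\mathrm{in}}=\sum_{2^j\lesssim\ell(Q_0)}T_j$. On $Q_0\setminus\bigcup_j Q_j^{(1)}$ the tail $T_\Omega^{\mathrm{out}}f$ is essentially constant and is absorbed into $\langle |f|\rangle_{p,3Q_0}$. For the inner piece, the $p'$-linear $L^p$ bound above applied to $f\chi_{3Q_0}$, together with the pointwise control of $g$ on the good set, yields the principal contribution $p'\|\Omega\|_\infty\langle |f|\rangle_{p,3Q_0}\langle |g|\rangle_{p,3Q_0}|Q_0|$. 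The residual pieces, supported in each $Q_j^{(1)}$, are handled by recursion, and summing the geometric series over all generations gives the claimed sparse bound. The main obstacle is the uniform $p'$-linear $L^p$ estimate for $\sum_{j\in F}T_j$: this rests on Seeger's Fourier-support decomposition together with a careful tracking of constants through extrapolation, and is where every quantitative feature of the final sparse bound originates.
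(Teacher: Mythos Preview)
The paper does not actually prove this theorem: it is quoted verbatim from Di~Plinio--Hyt\"onen--Li \cite{dhl} as background, so there is no ``paper's own proof'' to compare against. More importantly, the displayed inequality in the statement almost certainly contains a typo: the surrounding text attributes the result to \cite{dhl} as a sparse bound for the \emph{maximal} operator $T_\Omega^*$, and the subsequent Theorem~\ref{dinglidhl} and Theorem~\ref{dingli1.3} are explicitly about $T_\Omega^*$. Read literally, with $T_\Omega$ in place of $T_\Omega^*$, the statement is already weaker than the $(L^1,L^p)$ bound of Conde-Alonso--Culiuc--Di~Plinio--Ou \cite{ccdo} that you cite, and your observation that $\langle|f|\rangle_{1,Q}\le\langle|f|\rangle_{p,Q}$ finishes it in one line; the recursive argument you sketch is then correct but unnecessary.

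If, however, the intended operator is $T_\Omega^*$ --- which the context strongly suggests --- then your proposal has a genuine gap. The stopping-time recursion you describe controls a single bilinear form $\int g\,T_\Omega f$, but for $T_\Omega^*$ one must dominate $\int g\,T_\Omega^* f$ with the supremum over truncation parameters sitting \emph{inside} the integral. The key analytic input in \cite{dhl} is not just a uniform $L^p$ bound for $\sum_{j\in F}T_j$, but a quantitative decay estimate for truncated pieces acting on functions with separated Fourier support (a ``single-tree'' or ``constant-stopping'' estimate adapted to the maximal truncation), together with a Carleson-embedding style argument that handles the sup over $k$ in $\sup_k|\sum_{j\ge k}T_j f|$. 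Your sketch contains neither ingredient, and the naive recursion does not commute with the supremum. So as written your argument proves the $T_\Omega$ statement but not the $T_\Omega^*$ statement that the paper is actually quoting.
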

Moreover, Di Plinio et al. \cite{dhl} established the following weighted norm inequality of Coifman-Fefferman type from their sparse estimate (see \cite[Theorem B]{dhl}).

\begin{theorem}\label{dinglidhl} Let $\Omega$ be homogeneous of degree zero,   have mean value zero on $S^{d-1}$ and $\Omega\in L^{\infty}(S^{d-1})$. Let $r\in (1,\,\infty)$.
Then for $p\in [1,\,\infty)$ and  $w\in A_{\infty}(\mathbb{R}^d)$,
$$\|T^*_{\Omega}f\|_{L^p(\mathbb{R}^d,\,w)}\lesssim r'\|\Omega\|_{L^{\infty}(S^{d-1})}[w]_{A_{\infty}}^2\|M_rf\|_{L^p(\mathbb{R}^d,\,w)}.$$
provided that $f$ is a bounded function with compact support, where and in the following, $M_{r}f(x)=[M(|f|^r)(x)]^{1/r}$.
\end{theorem}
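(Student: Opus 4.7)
The plan is to derive the Coifman--Fefferman-type bound directly from the bilinear sparse domination of Theorem~\ref{dingli1.3origin}. Fix $r \in (1,\infty)$, $p \in [1,\infty)$, $w \in A_\infty(\mathbb{R}^d)$, and a bounded function $f$ with compact support. Using the $L^p(w)$ duality with pairing $\int \phi\psi w\, dx$, write
\[\|T_\Omega^* f\|_{L^p(w)} = \sup_{\|g\|_{L^{p'}(w)} \leq 1}\int T_\Omega^* f(x)\, g(x)\, w(x)\, dx,\]
with the obvious simplification $g\equiv 1$ when $p=1$. Setting $h = gw$ and invoking Theorem~\ref{dingli1.3origin} with exponent $r$ produces a sparse family $\mathcal{S}$ with
\[\int T_\Omega^* f \cdot h \, dx \lesssim r' \|\Omega\|_{L^\infty(S^{d-1})} \sum_{Q \in \mathcal{S}} \langle f\rangle_{r,Q}\langle h\rangle_{r,Q}|Q|.\]

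The core task is therefore the weighted sparse estimate
\[\sum_{Q \in \mathcal{S}} \langle f\rangle_{r,Q}\langle h\rangle_{r,Q}|Q| \lesssim [w]_{A_\infty}^2 \|M_r f\|_{L^p(w)}\|g\|_{L^{p'}(w)}.\]
My approach is a principal-cubes (stopping-time) decomposition of $\mathcal{S}$ with respect to the $L^r$-averages of $f$: build $\mathcal{F} \subset \mathcal{S}$ by iteratively selecting maximal descendants $F' \subsetneq F$ with $\langle f\rangle_{r,F'} > 2\langle f\rangle_{r,F}$. The weak-$(r,r)$ bound for the localized $\mathcal{S}$-maximal operator makes $\mathcal{F}$ sparse. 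On each tree $\{Q\in\mathcal{S}:\pi_\mathcal{F}(Q)=F\}$ one has $\langle f\rangle_{r,Q} \leq 2\langle f\rangle_{r,F} \leq 2\inf_{F}M_r f$, reducing the sum to
\[\sum_{F\in\mathcal{F}} \Big(\inf_{F} M_r f\Big) \sum_{Q:\,\pi_\mathcal{F}(Q)=F} \langle h\rangle_{r,Q}|Q|.\]
The inner sum is controlled by an $A_\infty$ Carleson embedding for the measure $w\,dx$, producing one factor of $[w]_{A_\infty}$; a parallel stopping argument applied to $h$ (or, equivalently, a second $A_\infty$-weighted Carleson embedding) yields the second $[w]_{A_\infty}$ factor. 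Summing over $F\in\mathcal{F}$ and applying H\"older's inequality in $L^p(w)$ closes the estimate.

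The main obstacle is tracking the precise power $[w]_{A_\infty}^2$. Each factor arises from an application of the Hyt\"onen--P\'erez quantitative reverse H\"older inequality for $A_\infty$ weights: the first when converting the Carleson sum coming from the $f$-principal cubes into $\|M_r f\|_{L^p(w)}$, and the second when handling $h=gw$. Because $w$ is only in $A_\infty$ rather than in some $A_p$, the average $\langle h\rangle_{r,Q}=\langle gw\rangle_{r,Q}$ cannot be tamed by a single one-weight estimate; the second Carleson-type argument is precisely what replaces the missing $A_p$ structure, at the cost of the extra $[w]_{A_\infty}$ factor. A secondary technical concern is to ensure that the implicit constants remain uniform in $r\in(1,\infty)$ beyond the explicit $r'$ factor inherited from Theorem~\ref{dingli1.3origin}.
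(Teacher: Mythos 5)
First, a point of reference: the paper does not prove Theorem~\ref{dinglidhl} at all -- it is quoted verbatim from Di Plinio--Hyt\"onen--Li \cite[Theorem~B]{dhl}, so there is no ``paper's proof'' to compare against; I am judging your argument on its own.

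Your overall scaffolding (dualize with $h=gw$, run a principal--cubes decomposition, appeal to $A_\infty$--Carleson embedding, and try to pin the factor $[w]_{A_\infty}^2$ on two uses of reverse H\"older) is the right genre of argument, but there is a genuine gap at the very first step, and it propagates. You invoke Theorem~\ref{dingli1.3origin} with exponent equal to the \emph{given} $r$ and are then left to bound $\sum_{Q\in\mathcal S}\langle f\rangle_{r,Q}\langle gw\rangle_{r,Q}|Q|$. For a general $w\in A_\infty$ and a fixed $r>1$ this quantity is simply not controllable: the local averages $\langle gw\rangle_{r,Q}=\bigl(\tfrac1{|Q|}\int_Q|g|^rw^r\bigr)^{1/r}$ can be infinite. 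Concretely, take $w(x)=|x|^{-d+\delta}$ with $0<\delta<d/r$; this is an $A_1$ (hence $A_\infty$) weight with bounded $[w]_{A_\infty}$, yet $w^r=|x|^{-rd+r\delta}$ is not locally integrable, so $\langle gw\rangle_{r,Q}=\infty$ for any cube $Q$ containing the origin and any $g$ nonvanishing there. In particular the ``$A_\infty$ Carleson embedding'' you appeal to cannot be applied: that embedding controls sums of $w(Q)=\langle w\rangle_Q|Q|$, not $\langle gw\rangle_{r,Q}|Q|$, and reverse H\"older for $w\in A_\infty$ only reaches exponents up to $1+c/[w]_{A_\infty}$, which is strictly below any prescribed $r>1$ once $[w]_{A_\infty}$ is large.

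The fix, and the reason the theorem nonetheless holds, is that the exponent fed into the bilinear sparse bound must be chosen \emph{depending on the weight}, not equal to the target $r$: apply Theorem~\ref{dingli1.3origin} with $s=\min\bigl(r,\,1+c_d/[w]_{A_\infty}\bigr)$, so that $w$ satisfies a reverse H\"older inequality at exponent $s$, and then use the trivial monotonicity $\langle f\rangle_{s,Q}\le\langle f\rangle_{r,Q}\le\inf_QM_rf$ to pass to $M_r$ on the $f$--side. The constant $s'$ is then $\lesssim r'+[w]_{A_\infty}\lesssim r'[w]_{A_\infty}$, reverse H\"older at exponent $s$ tames $\langle gw\rangle_{s,Q}$ and supplies a power of $[w]_{A_\infty}$, and a Carleson embedding for the sparse family (now legitimately applicable) supplies the rest; this is how the $r'[w]_{A_\infty}^2$ constant assembles. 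As written, with $s=r$, your argument cannot recover the $[w]_{A_\infty}^2$ dependence and in fact breaks down already at the level of finiteness of the sparse form.
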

The purpose of this paper is to establish a more refined bilinear sparse domination for $T_{\Omega}^*$. Our main result can be stated as follows.
\begin{theorem}\label{dingli1.3}
Let $\Omega$ be homogeneous of degree zero, have mean value zero on $S^{d-1}$, and $\Omega\in L^{\infty}(S^{d-1})$.   Then for each $r\in (1,\,\infty)$ and bounded function $f$, there exists a sparse family $\mathcal{S}$, such that for all bounded function $g$ with compact support,
\begin{eqnarray}\label{eq1.bilinear}&&\Big|\int_{\mathbb{R}^d}g(x)T^*_{\Omega}f(x)dx\Big|\lesssim \|\Omega\|_{L^{\infty}(S^{d-1})}\big(r'\mathcal{A}_{\mathcal{S},\,L^{1},L^r}(f,\,g)+
\mathcal{A}_{\mathcal{S},L^{\Phi},L^r}(f,g)\big).
\end{eqnarray}
\end{theorem}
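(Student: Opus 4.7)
The plan is to follow the recursive stopping-time framework of Lerner and Conde-Alonso--Culiuc--Di Plinio--Ou that underlies the proof of Theorem~\ref{dingli1.3origin}, but to feed the iteration with two complementary endpoint estimates in place of the single strong $L^p$ bound used in \cite{dhl}. The first is a local weak-type $(r,r)$ bound for $T^*_{\Omega}$ with constant of order $r'$, which will be responsible for the $r'\mathcal{A}_{\mathcal{S},L^1,L^r}$ term on the right-hand side of \eqref{eq1.bilinear}; the second is the sharp weak-$L^{\Phi}$ bound furnished by Theorem~\ref{dingliweak} specialized to $w\equiv 1$, which will be responsible for the $\mathcal{A}_{\mathcal{S},L^{\Phi},L^r}$ term.

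After normalizing $\|\Omega\|_{L^{\infty}(S^{d-1})}=1$, I would reduce in the standard way to constructing a sparse family inside each dyadic cube $Q_0$ containing the support of $g$. Then define the local grand maximal operator
$$\mathcal{M}_{\Omega,Q_0}f(x)=\sup_{Q\ni x,\,Q\subset Q_0}\bigl\|T^*_{\Omega}(f\chi_{3Q_0\setminus 3Q})\bigr\|_{L^{\infty}(Q)}.$$
The central technical step is to establish two localized weak endpoint estimates: for $f$ supported in $3Q_0$ and all $\alpha>0$,
$$\bigl|\{x\in Q_0:\mathcal{M}_{\Omega,Q_0}f(x)>\alpha\}\bigr|\lesssim\int_{3Q_0}\Phi(|f(x)|/\alpha)\,dx,$$
together with
$$\bigl|\{x\in Q_0:\mathcal{M}_{\Omega,Q_0}f(x)>\alpha\}\bigr|\lesssim(r')^{r}\alpha^{-r}\|f\|_{L^r(3Q_0)}^{r}.$$

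Granted these two bounds, the sparse family is built by the usual iteration. Starting from $Q_0\in\mathcal{S}$, for each $Q\in\mathcal{S}$ one collects the maximal dyadic subcubes $Q'\subset Q$ on which one of $M(f\chi_{3Q})(x)>\lambda_1\langle |f|\rangle_{1,3Q}$, $\mathcal{M}_{\Omega,Q}f(x)>\lambda_2\bigl(r'\langle |f|\rangle_{1,3Q}+\langle |f|\rangle_{\Phi,3Q}\bigr)$, or $\langle |g|\rangle_{r,Q'}>\lambda_3\langle |g|\rangle_{r,Q}$ holds. With $\lambda_1,\lambda_2,\lambda_3$ sufficiently large, the two weak-type bounds above together with the $L^1\to L^{1,\infty}$ and $L^r\to L^{r,\infty}$ boundedness of $M$ and $M_r$ guarantee that the total measure of the selected children is at most $|Q|/2$, giving the required $1/2$-sparseness and permitting iteration. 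Outside the selected children, $T^*_{\Omega}f$ is pointwise controlled by $r'\langle |f|\rangle_{1,3Q}+\langle |f|\rangle_{\Phi,3Q}$, and H\"older's inequality applied to $g\chi_{Q\setminus\bigcup Q'}$ against $\langle |g|\rangle_{r,Q}$ produces exactly the two bilinear sparse forms appearing on the right-hand side of \eqref{eq1.bilinear}.

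The main obstacle is producing the weak-$L^{\Phi}$ bound for the local grand maximal operator $\mathcal{M}_{\Omega,Q_0}$, since Theorem~\ref{dingliweak} is stated only for the global $T^*_{\Omega}$. To deduce the localized version one needs a Cotlar-type pointwise inequality of the shape $\mathcal{M}_{\Omega,Q_0}f(x)\lesssim T^*_{\Omega}(f\chi_{3Q_0})(x)+M_{r}f(x)$, valid with constants independent of $Q_0$; the first summand is then handled by Theorem~\ref{dingliweak} with $w\equiv 1$ and the second by the weak-$(r,r)$ boundedness of $M_r$. Controlling the truncation supremum uniformly in $\epsilon$ in the passage from the global to the local operator, while keeping track of the dyadic annular decomposition of $\Omega$ used in \cite{bhmo} without destroying the sharp $\Phi$-endpoint, is the delicate technical core of the argument.
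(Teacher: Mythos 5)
Your high-level skeleton (iterated stopping time plus two complementary endpoint bounds, one producing the $r'\mathcal{A}_{\mathcal{S},L^1,L^r}$ piece and one producing the $\mathcal{A}_{\mathcal{S},L^\Phi,L^r}$ piece) is structurally the same as the paper's, but the crucial input is mis-identified and cannot be produced by the route you sketch. You build the stopping time around the $L^\infty$-based grand maximal truncation $\mathcal{M}_{\Omega,Q_0}f(x)=\sup_{Q\ni x,Q\subset Q_0}\|T^*_\Omega(f\chi_{3Q_0\setminus 3Q})\|_{L^\infty(Q)}$ and assert that it satisfies a local weak-$L^\Phi$ bound with no loss. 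For rough $\Omega\in L^\infty(S^{d-1})$ such a bound is exactly what one cannot establish: the Cotlar-type pointwise inequality $\mathcal{M}_{\Omega,Q_0}f\lesssim T^*_\Omega(f\chi_{3Q_0})+M_rf$ that you propose as your ``delicate core'' requires angular regularity of $\Omega$ and is simply not available here — this is precisely the obstruction that led Lerner to replace $\mathcal{M}_T$ by the rearrangement operator $M_{\lambda,T}$ and the $L^p$-average grand maximal $\mathscr{M}_{p,T}$. Moreover, even granting your Cotlar inequality, substituting the known weak bounds would yield $|\{\mathcal{M}_{\Omega,Q_0}f>\alpha\}|\lesssim\int\Phi(|f|/\alpha)+\alpha^{-r}\int|f|^r$; the $M_r$ tail is not dominated by $\int\Phi(|f|/\alpha)$ as $\alpha\to 0$ or on large data, so the claimed pure $\Phi$-endpoint for $\mathcal{M}_{\Omega,Q_0}$ would not follow. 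In fact even the weaker $\mathscr{M}_{p,T^*_\Omega}$ does not have a bound of that form without a $p$-dependent factor on the $L^1$ piece, which is what the paper's estimate (3.3) records.

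What the paper actually does is to prove a new quantitative weak endpoint estimate for $M_{\lambda,T_\Omega^{**}}$ (Theorem \ref{dingli2.1}): $|\{M_{\lambda,T_\Omega^{**}}f>\alpha\}|\lesssim(1+\log(1/\lambda))\int|f|/\alpha+\int\Phi(|f|/\alpha)$, whose proof (all of Section 2: the refined Calder\'on--Zygmund decomposition with the doubly-exponential level sets $2^{c_12^l}\lambda^{-1}$, the approximation operators $T_l$, $T_l^{**}$, $H^{**}_m$, Lemmas \ref{lem2.4}--\ref{lemm2.9}, and the four-term splitting $E_1,\dots,E_4$) is the real content of the result. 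Lemma \ref{lem3.1} then converts this into the estimate (3.3) for $\mathscr{M}_{p,T^*_\Omega}$, and only then does the sparse stopping-time argument of \cite{hulai,ler5} — using (3.3) for the off-diagonal piece and the global Bhojak--Mohanty estimate (3.4) for the diagonal piece — produce \eqref{eq1.bilinear}. Your proposal bypasses Theorem \ref{dingli2.1} entirely and replaces it with an assertion that does not hold for rough kernels, so the gap is fatal rather than cosmetic.
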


Theorem \ref{dingli1.3} leads to an alternative proof of Theorem \ref{dingliweak}. We will show this in Section 3. Moreover,   the sparse domination (\ref{eq1.bilinear}) is more refined than the sparse domination in   Theorem \ref{dingli1.3origin}. This follows from the fact that for $r\in (1,\,\infty)$ and cube $Q\subset \mathbb{R}^d$,
\begin{eqnarray}\label{equa4.1} \langle|f |\rangle_{\Phi,\,Q}\lesssim \log r'\langle|f|\rangle_{Q}+\langle |f|\rangle_{r,\,Q}.\end{eqnarray}
To prove this, we may assume that $\langle |f|\rangle_{r,\,Q}=1$. Write
\begin{eqnarray*}
\int_{Q}\Phi(|f(x)|)dx&=&\int_{Q(|f|>4{\rm e}^2)}\Phi(|f(x)|)dx+\int_{Q(|f|\leq 4{\rm e}^2)}\Phi(|f(x)|)dx\\
&\leq &\int_{Q(|f|>4{\rm e}^2)}\Phi(|f(x)|)dx+2\int_{Q}|f(x)|dx.
\end{eqnarray*}
Recall that  when $t>4{\rm e}^2$, $\log t\leq \frac{t^{r-1}}{r-1}$ and
\begin{eqnarray*}\log \log ({\rm e}^2+t)\leq \log \Big(\frac{(2 t)^{r-1}}{r-1}\Big)\leq (r-1)\log 2+t^{r-1}+\log (\frac{1}{r-1}) . \end{eqnarray*}
This implies that
\begin{eqnarray*}
\int_{Q(|f|>4{\rm e}^2)}\Phi(|f(x)|)dx \lesssim \int_Q|f(x)|^{r}dx+\int_Q|f(x)|dx\log r'.
\end{eqnarray*}
Combining the estimate above leads to that
$$\frac{1}{|Q|}\int_{Q}\Phi(|f(x)|)dx \lesssim1+\log ( r')\langle|f|\rangle_Q,
$$
which yields (\ref{equa4.1}).

We make some conventions. In what follows, $C$ always denotes a
positive constant that is independent of the main parameters
involved but whose value may differ from line to line. We use the
symbol $A\lesssim B$ to denote that there exists a positive constant
$C$ such that $A\le CB$.   Constant with subscript such as $c_1$, does not change
in different occurrences. For a set $E\subset\mathbb{R}^d$,
$\chi_E$ denotes its characteristic function. For a cube $Q$, we use $\ell(Q)$ to denote the side length of $Q$, and $\lambda Q$  denotes the cube with the same center as $Q$  whose
side length is $\lambda$ times that of $Q$. For a suitable function $f$, we denote $\widehat{f}$ the Fourier transform of $f$.

%%%%%%%%%%%%%%%%%%%%%%%%%%%%%%%%%%%%%%
\section{An endpoint estimate}
Given a sublinear operator $T$, define the maximal operator ${M}_{\lambda,\,T}$ by
$${M}_{\lambda,\,T}f(x)=\sup_{Q\ni x}\big(T(f\chi_{\mathbb{R}^d\backslash 3Q})\chi_{Q}\big)^*(\lambda |Q|),\,\,(0<\lambda<1),$$
where the supremum is taken over all cubes $Q\subset \mathbb{R}^d$ containing $x$, and $h^*$ denotes the non-increasing rearrangement of $h$.
The operator ${M}_{\lambda,\,T}$ was introduced by Lerner \cite{ler5}, and is useful in the study of bilinear sparse dominations for the operator $T_{\Omega}$, see \cite{ler5,hulai}. Also, for $p\in (0,\,\infty]$, define the maximal operator $\mathscr{M}_{p,T}$ by
$$\mathscr{M}_{p,\,T}f(x)=\sup_{Q\ni x}\Big(\frac{1}{|Q|}\int_{Q}|T(f\chi_{\mathbb{R}^d\backslash 3Q})(y)|^pdy\Big)^{1/p}.
$$
We denote $\mathscr{M}_{\infty,\,T}$ by $\mathcal{M}_{T}$.
Obviously, for any $x\in\mathbb{R}^d$ and  $p\in (0,\,\infty]$,
$$M_{\lambda,\,T}f(x)\leq \lambda^{-1/p}\mathscr{M}_{p,\,T}f(x).$$

Let $\eta\in C^{\infty}_0(\mathbb{R}^d)$ be a radial function  such that ${\rm supp}\, \eta\subset\{x:\frac{1}{2}\leq |x|\leq 2\}$ and
$\sum_{j\in\mathbb{Z}}\eta_j(x)=1$ for all $x\in \mathbb{R}^d\backslash\{0\}$, where $\eta_j(x)=\eta(2^{-j}x)$.
For $j\in\mathbb{Z}$ and  a function $\Omega$ on $S^{d-1}$, let $K_j(x)=\frac{\Omega(x)}{|x|^{d}}\eta_j(x)$. Then the singular integral operator $T_\Omega$ can be written as $T_\Omega f(x) = \sum_{j\in\mathbb{Z}}K_j*f(x)$. Let $T_{\Omega}^{**}$ be the lacunary maximal operator associated with $T_{\Omega}$, defined by
$$T_{\Omega}^{**}f(x)=\sup_{k\in\mathbb{Z}}\Big|\sum_{j\geq k}\int_{\mathbb{R}^d}K_j(x-y)f(y)dy\Big|.$$
Our main purpose in this section is to establish the following weak type endpoint estimate for $M_{\lambda,\,T_{\Omega}^{**}}$.

\begin{theorem}\label{dingli2.1}
Let $\Omega$ be homogeneous of degree zero and have mean value zero.
Suppose that $\Omega\in L^{\infty}(S^{d-1})$ with $\|\Omega\|_{L^{\infty}(S^{d-1})}=1$. Then for $\lambda\in (0,\,1)$,
\begin{eqnarray}\label{eqn2.8}
&&|\{x\in\mathbb{R}^d:M_{\lambda,T_{\Omega}^{**}}f(x)>\alpha\}|\lesssim \big(1+\log \big(\frac{1}{\lambda}\big)\big)\int_{\mathbb{R}^d}\frac{|f(x)|}{\alpha}dx+\int_{\mathbb{R}^d}\Phi\big(\frac{|f(x)|}{\alpha}\big)dx.\end{eqnarray}
\end{theorem}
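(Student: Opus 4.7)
The plan is to combine a Calderón--Zygmund decomposition of $f$ at height $\alpha$ with a Seeger--Christ-type decomposition of the rough kernel $\Omega(y')/|y|^d$, using the rearrangement truncation at scale $\lambda|Q|$ in the definition of $M_{\lambda, T_\Omega^{**}}$ to extract the $\log(1/\lambda)$ factor. By homogeneity one may normalize so that $\alpha = 1$ and $\|\Omega\|_{L^\infty(S^{d-1})} = 1$, and apply the standard CZ decomposition at height $1$ to write $f = g + b$ with $\|g\|_\infty \lesssim 1$, $\|g\|_2^2 \lesssim \|f\|_1$, and $b = \sum_i b_i$, where each $b_i$ is supported in a cube $Q_i$ with $\int b_i = 0$, $\|b_i\|_1 \lesssim |Q_i|$, and $\sum_i |Q_i| \lesssim \|f\|_1$. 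Sublinearity of $M_{\lambda, T_\Omega^{**}}$ then splits the level set $\{M_{\lambda, T_\Omega^{**}}f > 1\}$ into a $g$-contribution and a $b$-contribution.

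For the good part, the classical $L^2$-boundedness of $T_\Omega^{**}$ (for $\Omega \in L^\infty(S^{d-1})$) upgrades, via the standard comparison with the Hardy--Littlewood maximal function, to an $L^2$ bound for $\mathscr{M}_{2, T_\Omega^{**}}$. Combined with the pointwise inequality $M_{\lambda, T_\Omega^{**}}g \le \lambda^{-1/2}\mathscr{M}_{2, T_\Omega^{**}}g$ noted in the excerpt and Chebyshev, one obtains the preliminary estimate $|\{M_{\lambda, T_\Omega^{**}}g > 1/2\}| \lesssim \lambda^{-1}\|f\|_1$, which is too crude. To replace the $\lambda^{-1}$-factor by $\log(1/\lambda)$, I would decompose $g$ into layers via nested CZ decompositions at doubling heights $2^k$ for $k = 0, 1, \ldots, \lceil\log_2(1/\lambda)\rceil$, applying the $L^2$-rearrangement bound to each bounded layer at the level calibrated so that the Chebyshev loss is compensated by the layer-$L^2$ improvement, and summing across the $O(\log(1/\lambda))$ layers to produce the $(1 + \log(1/\lambda))\|f\|_1$ term.

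For the bad part, I would adapt the Bhojak--Mohanty argument underlying Theorem \ref{dingliweak}. Split each $K_j$ as $K_j \ast \phi_{j - N} + (K_j - K_j \ast \phi_{j - N})$, where $\phi_{j-N}$ is a Schwartz mollifier at scale $2^{j - N}$ and $N$ is a parameter to be tuned against the scale mismatch $|j - \log_2\ell(Q_i)|$. The smooth convolution is a CZ-type kernel, and its contribution to $T_\Omega^{**}b$ is handled by the usual size/smoothness cancellation against $\int b_i = 0$, with the exceptional set $\bigcup_i 3 Q_i$ of measure $\lesssim \|f\|_1$. The rough remainder has its Fourier symbol vanishing near the origin thanks to the mollifier, giving a Parseval-type $L^2$ bound with geometric decay in $|j - \log_2\ell(Q_i)|$ amplified by $N$. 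Optimizing $N$ against the scale-counting, as in the Seeger/Bhojak--Mohanty schemes, yields the $\log\log(e^2 + \cdot)$ refinement characteristic of the Orlicz function $\Phi$, producing the $\int \Phi(|f|/\alpha)\,dx$ term.

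The main obstacle is producing the sharp $\log(1/\lambda)$ dependence for the good-part estimate: the layered CZ iteration requires a delicate $L^\infty$-versus-$L^1$ balance at each layer so that the $\lambda^{-1}$-loss from Chebyshev does not blow up exponentially across the $O(\log(1/\lambda))$ layers. A secondary difficulty is that the supremum over cubes $Q \ni x$ in the definition of $M_{\lambda, T_\Omega^{**}}$ is not a priori coupled to the CZ cubes $Q_i$, so the Seeger-type kernel decomposition must be executed uniformly over all test cubes $Q$, and the $\lambda|Q|$-exceptional sets must be absorbed consistently with the external supremum over $Q$ rather than at a single fixed scale.
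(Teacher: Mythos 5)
Your high-level framework (Calder\'on--Zygmund decomposition plus a Seeger-type approximation of the rough kernel) is the right family of ideas, and you correctly identify the two places where the argument must be steered: producing $\log(1/\lambda)$ for the good part and $\log\log$ for the bad part. However, the mechanisms you propose for both are not the ones that actually work, and you yourself flag the good-part balancing as an obstacle you do not know how to resolve; here that obstacle is fatal to your route, not just a technicality.

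\textbf{Good part.} You do CZ at height $\alpha=1$, obtaining $\|g\|_\infty \lesssim 1$, then propose to use $M_{\lambda,T^{**}_\Omega}g\le \lambda^{-1/2}\mathscr{M}_{2,T^{**}_\Omega}g$ and repair the resulting $\lambda^{-1}\|f\|_1$ by an iterated CZ over $O(\log(1/\lambda))$ dyadic heights. This is not the paper's approach, and it is unclear how to make the layerwise Chebyshev losses stop compounding; you acknowledge the issue. The paper instead does CZ at height $\lambda^{-1}$, so that $\|g\|_\infty\lesssim\lambda^{-1}$ and $\|g\|_2^2\lesssim\lambda^{-1}\|f\|_1$, and then splits $T^{**}_\Omega = (T^{**}_\Omega - T^{**}_{l_0}) + T^{**}_{l_0}$ with $l_0$ chosen so that $\kappa l_0\sim 1+\log(1/\lambda)$. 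The approximation error is controlled by Lemma~\ref{lem2.2} together with the $L^2$-decay $\|T_\Omega f - T_{l_0}f\|_{L^2}\lesssim 2^{-\kappa l_0}\|f\|_{L^2}$ of Lemma~\ref{lem2.4}: the factor $2^{-2\kappa l_0}$ exactly annihilates the $\lambda^{-1}$ coming from Chebyshev and from $\|g\|_2^2$. The smooth truncated operator $T^{**}_{l_0}$ is then handled through $\mathcal{M}_{T^{**}_{l_0}}$, whose weak (1,1) norm is $\lesssim l_0 \sim 1+\log(1/\lambda)$ (Lemma~\ref{lem2.6}). So the $\log(1/\lambda)$ does not come from counting CZ layers at all; it is the weak-(1,1) cost of a single smoothed operator $T_{l_0}$ whose Dini modulus has $\int_0^1\omega(t)\,dt/t\lesssim l_0$.

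\textbf{Bad part.} You propose a single mollification scale $N$ ``tuned against the scale mismatch'' and say that optimizing $N$ produces the $\log\log$ term. This misses the decomposition that actually generates $\Phi(t)=t\log\log(e^2+t)$. The paper stratifies the bad function $b_P$ as $b_P=\sum_l b_P^l$ where $b_P^l$ lives on the set $\{2^{c_1 2^{l-1}}\lambda^{-1}<|f|\le 2^{c_1 2^l}\lambda^{-1}\}$, i.e.\ a \emph{double-exponential} stratification in the size of $|f|$, so that layer index $l$ is $\sim\log\log|f|$. Each layer $l$ is matched with a mollification scale $2^{l+N_0}$, the difference $T_{2^{l+N_0},j}-T_{2^{N_0},j}$ is telescoped into $\sum_{m\le l}(T_{2^{m+N_0},j}-T_{2^{m+N_0-1},j})$, and the mean-zero pieces $b^l_{P,2}$ are fed into the Seeger long-range/short-range decomposition (Lemma~\ref{lemm2.8}) together with Honz\'{\i}k's rotation/averaging bound (Lemma~\ref{lemm2.9}). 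The $\Phi$-integral then emerges because the number of relevant scales $s$ at layer $l$ is $\lesssim c_2 l + N_\lambda$, producing $\sum_l (l + \log(1/\lambda))\sum_P\|b_P^l\|_1 \lesssim (1+\log(1/\lambda))\|f\|_1 + \int \Phi(|f|)\,dx$. Without this explicit stratification by $\log\log|f|$, there is no way a single-parameter optimization over $N$ will output the Orlicz function $\Phi$, and your sketch of the bad part as written would only yield a bound with $\|f\|_1$ multiplied by a single $\lambda$-dependent constant, not the $\Phi$-integral.

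In short: the CZ-at-height-$\lambda^{-1}$ choice (not height $1$) and the double-exponential layering $b_P^l$ are the two load-bearing ideas of the proof, and both are absent from your proposal.
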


To prove Theorem \ref{dingli2.1}, we will employ some lemmas.

\begin{lemma}\cite{ler5} \label{lem2.2}
Let $T$ be a sublinear operator which is bounded on $L^2(\mathbb{R}^d)$. Then for $\lambda\in (0,\,1)$,
$$\|M_{\lambda,\,T}f\|_{L^{2,\,\infty}(\mathbb{R}^d)}\lesssim \lambda^{-1/2}\|T\|_{L^2(\mathbb{R}^d)\rightarrow L^2(\mathbb{R}^d)}\|f\|_{L^2(\mathbb{R}^d)}.$$
\end{lemma}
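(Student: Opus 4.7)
The plan is to prove the lemma by reducing the rearrangement-based maximal operator $M_{\lambda,T}$ to the $L^2$-averaged variant $\mathscr{M}_{2,T}$, then bounding $\mathscr{M}_{2,T}$ pointwise by two classical maximal operators whose weak-$L^2$ mapping properties are standard.

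First I would exploit the elementary fact that for any measurable function $h$,
\[
h^*(\lambda|Q|) \leq \left(\frac{1}{\lambda|Q|}\int_0^{|Q|}(h^*(t))^2\,dt\right)^{1/2} = \lambda^{-1/2}\left(\frac{1}{|Q|}\int_Q |h(y)|^2\,dy\right)^{1/2},
\]
applied with $h = T(f\chi_{\mathbb{R}^d\setminus 3Q})\chi_Q$. Taking the supremum over $Q\ni x$ this yields the pointwise domination
\[
M_{\lambda,T}f(x) \leq \lambda^{-1/2}\mathscr{M}_{2,T}f(x),
\]
which already accounts for the $\lambda^{-1/2}$ factor in the target estimate. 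It then suffices to show that $\mathscr{M}_{2,T}$ sends $L^2$ to $L^{2,\infty}$ with constant comparable to $\|T\|_{L^2\to L^2}$.

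Next, for fixed $x\in Q$, I would split $f = f\chi_{3Q} + f\chi_{\mathbb{R}^d\setminus 3Q}$, so that sublinearity of $T$ gives $|T(f\chi_{\mathbb{R}^d\setminus 3Q})(y)| \leq |Tf(y)| + |T(f\chi_{3Q})(y)|$. Averaging in $L^2$ over $Q$ and using the $L^2$-boundedness of $T$ on the local piece,
\[
\left(\frac{1}{|Q|}\int_Q |T(f\chi_{3Q})(y)|^2\,dy\right)^{1/2} \leq \frac{\|T\|_{L^2\to L^2}}{|Q|^{1/2}}\left(\int_{3Q}|f|^2\right)^{1/2} \lesssim \|T\|_{L^2\to L^2}\,M_2f(x),
\]
while the global piece is clearly dominated by $M_2(Tf)(x)$. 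Taking the supremum over $Q\ni x$ produces the pointwise inequality
\[
\mathscr{M}_{2,T}f(x) \lesssim M_2(Tf)(x) + \|T\|_{L^2\to L^2}\,M_2f(x).
\]

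Finally, since $M_2f = (M(|f|^2))^{1/2}$ and the Hardy--Littlewood maximal operator is of weak type $(1,1)$, $M_2$ maps $L^2$ to $L^{2,\infty}$ with an absolute constant. Hence
\[
\|\mathscr{M}_{2,T}f\|_{L^{2,\infty}} \lesssim \|M_2(Tf)\|_{L^{2,\infty}} + \|T\|_{L^2\to L^2}\|M_2f\|_{L^{2,\infty}} \lesssim \|T\|_{L^2\to L^2}\|f\|_{L^2},
\]
and combining with the first step yields the desired weak-type bound. There is no real obstacle here: the only subtle point is making sure the $\lambda^{-1/2}$ factor comes out cleanly from the rearrangement inequality, and that the sublinearity of $T$ is used correctly when splitting the input; beyond that the argument reduces to classical maximal-function estimates.
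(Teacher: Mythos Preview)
The paper does not supply its own proof of this lemma; it is simply quoted from Lerner \cite{ler5}. Your argument is correct and is essentially the standard one: the pointwise bound $M_{\lambda,T}f(x)\le \lambda^{-1/2}\mathscr{M}_{2,T}f(x)$ is already recorded in the paper (just below the definition of $\mathscr{M}_{p,T}$), and your decomposition $|T(f\chi_{\mathbb{R}^d\setminus 3Q})|\le |Tf|+|T(f\chi_{3Q})|$ followed by the $L^2\to L^{2,\infty}$ bound for $M_2$ is exactly the natural route. The only point worth being explicit about is that the sublinearity hypothesis must include $|T(-g)|=|Tg|$ (i.e., absolute homogeneity) so that $|T(f-f\chi_{3Q})|\le |Tf|+|T(f\chi_{3Q})|$ is legitimate; with that understood there is nothing to add.
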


We  recall  some definitions  from \cite{lernaza}. Given
a cube $Q_0$, we denote by $\mathcal{D}(Q_0)$ the   set of dyadic cubes with respect to $Q_0$, that is, the cubes from $\mathcal{D}(Q_0)$ are formed by repeating subdivision of $Q_0$ and each of descendants into $2^n$ congruent sub-cubes.

Let $\mathscr{D}$ be a  collection of cubes in $\mathbb{R}^d$. We say  that $\mathscr{D}$  is a dyadic lattice if
\begin{itemize}
\item[\rm (i)]if $Q\in\mathscr{D}$, then $\mathcal{D}(Q)\subset \mathscr{D}$;
\item[\rm (ii)] every two cubes $Q',\, Q''\in\mathscr{D}$ have a common ancestor,  that is, there
exists $Q\in\mathscr{D}$ such that $Q',\,Q'' \in \mathcal{D}(Q)$;
\item[\rm (iii)] for every compact set $K\subset \mathbb{R}^d$,  there exists a cube $Q\in\mathscr{D}$
containing $K$.
\end{itemize}

We also have the following three lattice theorem, see \cite[Theorem 2.4]{ler5} or \cite{lernaza}.
\begin{lemma}\label{lem2.3}  For every dyadic lattice
$\mathscr{D}$, there exist $3^d$ dyadic lattices $\mathscr{D}_1$,\,\dots,\,$\mathscr{D}_{3^d}$  such that
$$\{3Q : Q \in\mathscr{D}\} = \cup_{j=1}^{3^d}\mathscr{D}_j$$
and for every cube $Q \in \mathscr{D}$ and $j = 1,\,\dots,\, 3^d$, there exists a unique cube
$R \in  \mathscr{D}_j$ of side length $\ell(R)=3\ell(Q)$ containing $Q$.
\end{lemma}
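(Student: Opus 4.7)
The plan is to construct the $3^d$ dyadic lattices explicitly by indexing them with vectors $\alpha \in \{0,1,2\}^d$ and using shifts that vary compatibly across scales. After reducing to the case where $\mathscr{D}$ is (locally, on any common ancestor) the standard dyadic lattice $\{Q_{k,m} = 2^k([0,1)^d + m)\}_{k \in \mathbb{Z}, m \in \mathbb{Z}^d}$, we have $3Q_{k,m} = 2^k([0,3)^d + m - \mathbf{1})$ with $\mathbf{1} = (1,\dots,1)$, a cube of side $3\cdot 2^k$ whose lower-left corner sits on $2^k\mathbb{Z}^d$. For each $\alpha \in \{0,1,2\}^d$, let $\nu_k^\alpha \in \{0,1,2\}^d$ be the coordinatewise representative of $2^k\alpha \bmod 3$, and define
\[
\mathscr{D}_\alpha := \bigl\{\, 2^k\bigl([0,3)^d + \nu_k^\alpha + 3m'\bigr) : k \in \mathbb{Z},\ m' \in \mathbb{Z}^d \,\bigr\}.
\]

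The congruence $\nu_{k+1}^\alpha \equiv 2\nu_k^\alpha \pmod 3$, built into the definition, is exactly the condition required so that the $2^d$ dyadic children of each cube of $\mathscr{D}_\alpha$ at generation $k+1$ again lie in $\mathscr{D}_\alpha$ at generation $k$; this yields the nestedness property (i). Properties (ii) and (iii) for $\mathscr{D}_\alpha$ follow from those of $\mathscr{D}$ together with the fact that the cubes of $\mathscr{D}_\alpha$ at any fixed scale tile $\mathbb{R}^d$, so $\mathscr{D}_\alpha$ is a bona fide dyadic lattice. The covering identity $\{3Q : Q \in \mathscr{D}\} = \bigcup_\alpha \mathscr{D}_\alpha$ follows by a direct computation: $3Q_{k,m} \in \mathscr{D}_\alpha$ iff $m - \mathbf{1} \equiv \nu_k^\alpha \pmod 3$ coordinatewise, which singles out exactly one $\alpha$ given $(k,m)$, while the reverse inclusion is immediate from the definition of $\mathscr{D}_\alpha$.

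For the last assertion, given $Q = Q_{k,m}$ and $\alpha \in \{0,1,2\}^d$, the unique $R \in \mathscr{D}_\alpha$ of side $3\ell(Q)$ containing $Q$ is obtained by solving $3m' \leq m - \nu_k^\alpha \leq 3m' + 2$ coordinatewise, whence $m' = \lfloor (m - \nu_k^\alpha)/3 \rfloor$, which manifestly exists and is unique. The only genuine obstacle in the argument is the combinatorial bookkeeping to guarantee that the shifts $\nu_k^\alpha$ scale compatibly across generations; the explicit choice $\nu_k^\alpha \equiv 2^k\alpha \pmod 3$ is engineered precisely to achieve this, using that $2$ has multiplicative order $2$ in $(\mathbb{Z}/3\mathbb{Z})^\times$ so that the relation $\nu_{k+1}^\alpha \equiv 2\nu_k^\alpha \pmod 3$ is automatically consistent in both directions along $k$. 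Beyond this, the remainder is routine verification.
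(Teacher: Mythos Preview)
The paper does not give its own proof of this lemma; it merely cites \cite[Theorem~2.4]{ler5} and \cite{lernaza}. Your construction---indexing the lattices by $\alpha\in\{0,1,2\}^d$, taking shifts $\nu_k^\alpha\equiv 2^k\alpha\pmod 3$, and using the compatibility $\nu_{k+1}^\alpha\equiv 2\nu_k^\alpha\pmod 3$ to obtain nestedness---is precisely the argument in those references, and the verifications you outline (nestedness, the covering identity $\{3Q:Q\in\mathscr{D}\}=\bigcup_\alpha\mathscr{D}_\alpha$, and the unique container via $m'=\lfloor(m-\nu_k^\alpha)/3\rfloor$) are all correct.

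The one step you glide over is the ``reduction to the standard dyadic lattice.'' Note that the standard lattice $\{2^k([0,1)^d+m)\}$ is \emph{not} itself a dyadic lattice in the sense of the paper: axiom~(ii) fails, since for instance $[-1,0)$ and $[0,1)$ have no common ancestor. For a genuine Lerner--Nazarov lattice $\mathscr{D}$, the cubes at scale $k$ have lower-left corners on a coset $c_k+2^k\mathbb{Z}^d$, with the consistency $c_{k+1}\equiv c_k\pmod{2^k}$ but with $c_{k+1}-c_k=2^k\delta_k$ for some $\delta_k\in\mathbb{Z}^d$ that need not vanish. Carrying these shifts through your computation, the nestedness condition for $\mathscr{D}_\alpha$ becomes $\nu_k^\alpha\equiv 2\nu_{k+1}^\alpha+\delta_k\pmod 3$ rather than the homogeneous relation you wrote. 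This is still solvable (since $2$ is invertible mod $3$, one may define the $\nu_k^\alpha$ recursively starting from $\nu_0^\alpha=\alpha$), and with that adjustment the rest of your proof goes through unchanged. Your parenthetical ``locally, on any common ancestor'' gestures at this, but the global consistency of the $\mathscr{D}_\alpha$ across scales really does require tracking the $\delta_k$'s.
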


Let  $\psi\in C^{\infty}_0(\mathbb{R}^d)$ be a nonnegative radial function such that
${\rm supp}\, \psi\subset \{x:\,|x|\leq 1/4\}$ and $\int_{\mathbb{R}^d}\psi(x)dx=1$.
For $j\in \mathbb{Z}$, set $\psi_j(x) = 2^{-jd}\psi(2^{-j}x)$. For a positive integer $l$, define
$$K^{l}(x)=\sum_{j\in\mathbb{Z}}K_j*\psi_{j-l}(x).$$
Let $T_l$, $W_l$,  $T_l^{**}$ be the operators defined by
$$T_lf(x)=K^l*f(x),$$
$$
W_lf(x)=\sum_{j\in\mathbb{Z}}\big(K_j-K_j*\psi_{j-l}\big)*f(x),
$$
and
$$
T_l^{**}f(x)=\sup_{k\in\mathbb{Z}}\Big|\sum_{j\geq k}\int_{\mathbb{R}^d}K_j*\psi_{j-l}(x-y)f(y)dy\Big|
$$
respectively.

\begin{lemma}\label{lem2.4}
Let $l\in\mathbb{N}$, $\Omega$ be homogeneous of degree zero and have mean value zero, and let $\Omega\in L^{\infty}(S^{d-1})$. Then, for every $0<\kappa<1$, one has
\begin{eqnarray}\label{eqn2.1}
\|T_{\Omega}f-T_{l}f\|_{L^2(\mathbb{R}^d)}\lesssim 2^{-\kappa l}\|\Omega\|_{L^{\infty}(S^{d-1})}\|f\|_{L^2(\mathbb{R}^d)}
\end{eqnarray}
and
\begin{eqnarray}\label{eqn2.2}
\|W^{**}_{l}f\|_{L^2(\mathbb{R}^d)}\lesssim 2^{-\kappa l}\|\Omega\|_{L^{\infty}(S^{d-1})}\|f\|_{L^2(\mathbb{R}^d)},
\end{eqnarray}
where and in the following,
\begin{eqnarray*}
W^{**}_{l}f(x)=\sup_{k\in\mathbb{Z}}\Big|\sum_{j\geq k}\int_{\mathbb{R}^d}\big(K_j*\psi_{j-l}(x-y)-K_j(x-y)\big)f(y)dy\Big|.
\end{eqnarray*}
\end{lemma}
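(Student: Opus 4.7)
The plan is Fourier-analytic. Set $S_j=K_j-K_j*\psi_{j-l}$, so that $\widehat{S_j}(\xi)=\widehat{K_j}(\xi)\bigl(1-\widehat{\psi}(2^{j-l}\xi)\bigr)$. The two key inputs are: (i) the classical Fourier decay
\[
|\widehat{K_j}(\xi)|\lesssim \|\Omega\|_{L^\infty(S^{d-1})}\min\bigl\{2^{j}|\xi|,\,(2^{j}|\xi|)^{-\alpha}\bigr\}\qquad(\alpha\in(0,1/2)),
\]
coming from mean zero of $\Omega$ in the regime $2^j|\xi|\lesssim 1$ and from van der Corput in the regime $2^j|\xi|\gtrsim 1$; and (ii) $|1-\widehat{\psi}(\eta)|\lesssim\min\{|\eta|,\,1\}$, from $\int\psi=1$ and the smoothness of $\psi$.

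For (2.1), Plancherel reduces matters to bounding the multiplier $m^l(\xi)=\sum_{j\in\mathbb{Z}}\widehat{S_j}(\xi)$ in $L^\infty$. Fix $\xi$, let $j_0\in\mathbb{Z}$ satisfy $2^{j_0}|\xi|\sim 1$, and partition the sum into three ranges. For $j\le j_0$ use $|\widehat{K_j}(\xi)|\lesssim 2^{j}|\xi|$ and $|1-\widehat{\psi}(2^{j-l}\xi)|\lesssim 2^{j-l}|\xi|$, giving a geometric sum $\lesssim 2^{-l}$. For $j_0<j\le j_0+l$ use $(2^{j}|\xi|)^{-\alpha}\cdot 2^{j-l}|\xi|$; the dominant term at $j=j_0+l$ gives $\lesssim 2^{-\alpha l}$. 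For $j>j_0+l$ use $(2^{j}|\xi|)^{-\alpha}$ against the trivial bound $1$, summing to $\lesssim 2^{-\alpha l}$. Any $\kappa\in(0,\alpha)$ then works in (2.1).

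For (2.2), decompose
\[
W_l^{**}f(x)\le|T_\Omega f(x)-T_lf(x)|+V_l^*f(x),\quad V_l^*f(x)=\sup_{k\in\mathbb{Z}}\Bigl|\sum_{j<k}S_j*f(x)\Bigr|,
\]
so the first term is already controlled by (2.1). For $V_l^*$, the same three-range analysis applied to $|\widehat{S_j}(\xi)|^2$ yields the square-function bound
\[
\sup_{\xi\in\mathbb{R}^d}\sum_{j\in\mathbb{Z}}|\widehat{S_j}(\xi)|^2\lesssim 2^{-2\kappa l}.
\]
Crucially, for each fixed $\xi$ only $O(l)$ of the $\widehat{S_j}(\xi)$ are non-negligible (those with $j$ in a window of length $\sim l$ around $j_0$), and the functions $\widehat{S_j}$ are almost orthogonal in frequency since each is essentially supported on $|\xi|\sim 2^{-j}$. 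Combining these, a Rademacher--Menshov maximal inequality on the effective range of $O(l)$ terms controls $\|V_l^*f\|_{L^2}$ by the square function with a loss of $\log l$, which is absorbed into the exponential gain: $\|V_l^*f\|_{L^2}\lesssim (\log l)\,2^{-\kappa l}\|f\|_{L^2}\lesssim 2^{-\kappa' l}\|f\|_{L^2}$ for any $\kappa'<\kappa$.

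The main obstacle is the maximal estimate in (2.2): converting the uniform-in-$k$ multiplier bound into a genuine $L^2$ bound for $\sup_k$ requires both the $\ell^2$-decay of $\{\widehat{S_j}(\xi)\}$ and the observation that at any given frequency only $O(l)$ terms contribute significantly, keeping the Rademacher--Menshov logarithmic loss manageable against the exponential gain $2^{-\kappa l}$.
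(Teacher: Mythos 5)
Your proof of (\ref{eqn2.1}) is fine and is essentially the standard Fourier-multiplier computation that the paper outsources to Watson \cite{wat}. The issue is with (\ref{eqn2.2}).

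The square-function bound $\sup_{\xi}\sum_j|\widehat{S_j}(\xi)|^2\lesssim 2^{-2\kappa l}$ is correct and elementary. But the passage from it to the maximal estimate is where your argument has a genuine gap, and it is precisely the step the paper's proof spends all its effort on (the three-term decomposition $\mathrm{I}_l^k,\mathrm{II}_l^k,\mathrm{III}_l^k$ in the style of Duoandikoetxea--Rubio de Francia). Your claim that ``each $\widehat{S_j}$ is essentially supported on $|\xi|\sim 2^{-j}$'' is not accurate: $K_j$ is compactly supported in space, so $\widehat{K_j}$ (hence $\widehat{S_j}$) lives on all of $\mathbb{R}^d$ with only \emph{polynomial} decay $\min\{2^j|\xi|,\,(2^j|\xi|)^{-\alpha}\}$ away from $|\xi|\sim 2^{-j}$. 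Consequently the $S_j*f$ are not mutually orthogonal, and the ``effective range of $O(l)$ terms'' is $\xi$-dependent. You cannot invoke a Rademacher--Menshov maximal inequality over a fixed finite family of orthogonal functions: for any fixed $x$ the supremum over $k$ genuinely ranges over all of $\mathbb{Z}$, and the set of scales $j$ that matter varies with the frequency content of $f$. To run your idea one would first need to localize $f$ in frequency (this is what the smooth cut-offs $\Psi_k$ with $\widehat{\Psi_k}=\omega(2^k\cdot)$ do in the paper), then show that \emph{for each frequency band} only $O(l)$ scales contribute up to exponentially small tails, apply Rademacher--Menshov band by band, and finally re-sum over bands using almost-orthogonality quantified through the polynomial decay. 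None of these steps is automatic, and the last re-summation is exactly the place where a naive argument overcounts.

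The paper's route sidesteps the maximal-over-$k$ difficulty altogether: writing $\sum_{j\geq k}S_j*f=\Psi_k*W_lf-\Psi_k*\!\sum_{j<k}S_j*f+\sum_{j\geq k}(\delta-\Psi_k)*S_j*f$, it bounds $\sup_k|\mathrm{I}_l^k|$ by $M(T_\Omega f-T_lf)$ (so (\ref{eqn2.1}) is recycled), and bounds the remaining two suprema pointwise by genuine square functions, each of which is then handled by Plancherel. The key advantage is that $\sup_k|\Psi_k*g|\lesssim Mg$ for the smoothed term, and the error terms become honest $\ell^2$ sums with no maximal operator left. If you want to keep a Rademacher--Menshov flavor you would need to reproduce something equivalent to this smoothing-and-splitting step; as stated, your proposal asserts the conclusion of that step rather than proving it.
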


\begin{proof} The estimate (\ref{eqn2.1}) was proved in \cite[p. 396]{wat}. To prove the estimate (\ref{eqn2.2}), we  will employ the
ideas used in \cite{drf}, with appropriate modifications. Let $S_l^jf(x)=\big(K_j-K_j*\psi_{j-l}\big)*f(x)$. Let
$\omega\in C^{\infty}_0(\mathbb{R}^d)$ such that $${\rm supp}\,\omega\subset\{x\in
\mathbb{R}^d:\, |x|\leq 1\},\,\,\omega(x)\equiv
1\,\,\hbox{if}\,\,|x|\leq 1/2.$$ For each integer $k$, let
$\Psi_k\in \mathcal{S}(\mathbb{R}^d)$ such that
$\widehat{\Psi}_k(\xi)=\omega(2^k\xi).$    For each fixed $k\in
\mathbb{Z}$, write
\begin{eqnarray*}\sum_{j=k}^{\infty}S_l^j*f(x)&=&
\Psi_k*\big(T_{\Omega}f-T_lf\big)(x)-\Psi_k*\Big(\sum_{j=-\infty}^{k-1}S_l^j*f\Big)(x)\\
&&+\sum_{j=k}^{\infty}(\delta-\Psi_k)*S_l^j*f(x)\\
&=&{\rm I}_l^kf(x)+{\rm II}_l^kf(x)+{\rm III}_l^kf(x),
\end{eqnarray*}
with $\delta$ the Dirac distribution. It is obvious that
$$\big|{\rm I}_l^kf(x)\big|\lesssim M\big(T_{\Omega}f-T_{l}f\big)(x),$$
and so by (\ref{eqn2.1}),
$$\big\|\sup_{k\in\mathbb{Z}}|{\rm
I}_l^kf|\big\|_{L^{2}(\mathbb{R}^d)}\lesssim\|T_{\Omega}f-T_{l}f\|_{L^2(\mathbb{R}^d)}\lesssim
2^{-\kappa l}\|f\|_{L^{2}(\mathbb{R}^d)}.$$ Now write
$$\sup_{k\in \mathbb{Z}}|{\rm II}_l^kf(x)|\lesssim
\Big(\sum_{u=-\infty}^{\infty}\Big|\Psi_u*\sum_{j=-\infty}^{u-1}S_l^j*f(x)\Big|^2\Big)^{1/2}.
$$Note that for any $\xi\in \mathbb{R}^d$,
\begin{eqnarray*}
\Big|\omega(2^u\xi)\sum_{j=-\infty}^{u-1}\widehat{K_j}(\xi)
\big(\widehat{\psi}(2^{j-l}\xi)-1\big)\Big|
&\lesssim&\Big|\omega(2^u\xi)\sum_{j=-\infty}^{u-1}|2^{j-l}\xi|\Big|\\
&\lesssim&2^{-l}\omega(2^u\xi)|2^u\xi|.
\end{eqnarray*}
It then follows from  Plancherel's theorem that
\begin{eqnarray*}
&&\big\|\sup_{k\in \mathbb{Z}}|{\rm
II}_l^kf|\big\|_{L^2(\mathbb{R}^d)}^2=\sum_{u=-\infty}^{\infty}\Big\|\Psi_u*\sum_{j=-\infty}^{u-1}S_l^j*f\Big\|_{L^2(\mathbb{R}^d)}^2\\
&&\qquad=\sum_{u=-\infty}^{\infty}\int_{\mathbb{R}^d}\Big|\sum_{j=-\infty}^{u-1}
\widehat{K_{j}}(\xi)
\big(\widehat{\psi}(2^{j-l}\xi)-1\big)\Big|^2|\omega(2^u\xi)\widehat{f}(\xi)|^2{\rm d}\xi\\
&&\qquad\lesssim
2^{-2l}\int_{\mathbb{R}^d}\sum_{u=-\infty}^{\infty}|\omega(2^u\xi)|^2|2^u\xi|^2\big|\widehat{f}(\xi)\big|^2\,{\rm
d}\xi.
\end{eqnarray*}
Recalling that $|\omega(x)|\lesssim \min\{1,\,|x|^{-2}\}$, we deduce that
\begin{eqnarray*}
\sum_{u=-\infty}^{\infty}|\omega(2^u\xi)|^2|2^u\xi|^2\lesssim\sum_{u:\, |2^u\xi|\leq 1}|2^u\xi|^2+\sum_{u:\,|2^u\xi|>1}|2^u\xi|^{-2}\lesssim 1.
\end{eqnarray*}
This, in turn, implies that
\begin{eqnarray*}\big\|\sup_{k\in \mathbb{Z}}|{\rm
II}_l^kf|\big\|_{L^2(\mathbb{R}^d)}\lesssim
2^{-l}\|f\|_{L^2(\mathbb{R}^d)}.\end{eqnarray*}

It remains to estimate term
$\sup_{k\in \mathbb{Z}}|{\rm III}_l^kf|$. write
\begin{eqnarray*}\sup_{k\in \mathbb{Z}}|{\rm III}_l^kf(x)|&\leq &
\sum_{j=0}^{\infty}\sup_{k\in\mathbb{Z}}\big|(\delta-\Psi_k)*S_{l}^{j+k}*f(x)\big|\\
&\lesssim&
\sum_{j=0}^{\infty}\Big(\sum_{u=-\infty}^{\infty}\Big|(\delta-\Psi_{u-j})*S_{l}^u*f(x)\Big|^2\Big)^{1/2}.
\end{eqnarray*}
As it is well known (see \cite{drf}), there exists a constant $\kappa_1\in (0,\,1)$ such that
$$
|\widehat{K_j}(\xi)|\lesssim \|\Omega\|_{L^{\infty}(S^{d-1})} |2^j\xi|^{-\kappa_1},
$$Therefore, \begin{eqnarray*}
&&\Big\|\Big(\sum_{u=-\infty}^{\infty}\Big|(\delta-\Psi_{u-j})*S_{l}^u*f(x)\Big|^2\Big)^{1/2}\Big\|_{L^2(\mathbb{R}^d)}^2\\
&&\quad=\sum_{u=-\infty}^{\infty}\int_{\mathbb{R}^d}\big|1-\omega(2^{u-l}\xi)\big|^2\Big|\widehat{K}_{u}(\xi)
\big(\widehat{\psi}(2^{u-l}\xi)-1\big)\Big|^2|\widehat{f}(\xi)|^2{\rm d}\xi\\
&&\quad\lesssim
\int_{\mathbb{R}^d}\sum_{u=-\infty}^{\infty}|1-\omega(2^{u-l}\xi)|^2|2^{u}\xi|^{-2\kappa_1}\big)\big|\widehat{f}(\xi)\big|^2\,{\rm d}\xi\\
&&\quad\lesssim 2^{-2l\kappa_1}\|f\|_{L^2(\mathbb{R}^d)}^2,
\end{eqnarray*}
since for each $\xi\in \mathbb{R}^d\backslash\{0\}$,
\begin{eqnarray*}\sum_{u=-\infty}^{\infty}|1-\psi(2^{u-l}\xi)|^2|2^{u}\xi|^{-2\kappa_1}&\lesssim&
\sum_{u:\,|2^u\xi|\geq
2^{l-1}}|2^{u}\xi|^{-2\kappa_1}\lesssim2^{-2l\kappa_1}.\end{eqnarray*} Thus,
$$\big\|\sup_{k\in \mathbb{Z}}|{\rm
III}_l^kf|\big\|_{L^2(\mathbb{R}^d)}\lesssim
2^{-\kappa_1 l}\|f\|_{L^2(\mathbb{R}^d)}.$$
Combining the estimates
for $\sup_{k\in \mathbb{Z}}|{\rm I}_l^kf|$, $\sup_{k\in \mathbb{Z}}|{\rm
II}_l^kf|$ and $\sup_{k\in \mathbb{Z}}|{\rm III}_l^kf|$ leads to our desired conclusion and then completes the proof of Lemma \ref{lem2.4}.\end{proof}

For $m\in\mathbb{N}$, let $H_m^{**}$ be the operator defined by
$$
H^{**}_{m}f(x)=\sup_{k\in\mathbb{Z}}\Big|\sum_{j\geq k}\int_{\mathbb{R}^d}\big(K_j*\psi_{j-2^m}(x-y)-K_j*\psi_{j-2^{m-1}}(x-y)\big)f(y)dy\Big|.
$$
Lemma \ref{lem2.4} tells us that for $m\in\mathbb{N}$,
\begin{eqnarray}\label{eqn2.approx}
\|H^{**}_mf\|_{L^{2}(\mathbb{R}^d)}\lesssim  2^{-\kappa 2^m}\|f\|_{L^2(\mathbb{R}^d)},
\end{eqnarray}
see  Lemma 8 in \cite{hon} for another proof of (\ref{eqn2.approx}).

\begin{lemma}\label{lem2.5} Let $l\in\mathbb{N}$, $\Omega$ be homogeneous of degree zero and have mean value zero.
Suppose that $\Omega\in L^{\infty}(S^{d-1})$. Then the operators $T_l$ and $T_l^{**}$ are all bounded from $L^{1}(\mathbb{R}^d)$ to $L^{1,\,\infty}(\mathbb{R}^d)$ with bound no more than $Cl\|\Omega\|_{L^{\infty}(S^{d-1})}$.
\end{lemma}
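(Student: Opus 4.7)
The plan is to run a Calder\'on--Zygmund decomposition and control the bad part by precise size/smoothness estimates on the smoothed kernels $K_j*\psi_{j-l}$. The factor $l$ will come from a narrow ``transition'' range of dyadic scales where neither support nor smoothness alone suffices. I would work directly with the maximal operator $T_l^{**}$; since $|T_lf|\le T_l^{**}f$, the estimate for $T_l$ then follows for free.

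First I would verify that $T_l^{**}$ is bounded on $L^2(\mathbb{R}^d)$ with norm $\lesssim\|\Omega\|_{L^\infty(S^{d-1})}$ \emph{uniformly} in $l$. Writing $T_l^{**}\le T_\Omega^{**}+W_l^{**}$, Lemma~\ref{lem2.4} controls the second piece by $2^{-\kappa l}\|\Omega\|_{L^\infty(S^{d-1})}$, while the lacunary maximal operator $T_\Omega^{**}$ is $L^2$-bounded by classical Littlewood--Paley arguments for $\Omega\in L^\infty(S^{d-1})$. Given $\alpha>0$, perform the Calder\'on--Zygmund decomposition $f=g+\sum_Q b_Q$ at height $\alpha$ in the usual way. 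Chebyshev combined with the $L^2$ bound absorbs the contribution of $g$; setting $E:=\bigcup_Q 2Q$, which has $|E|\lesssim\|f\|_1/\alpha$, it remains to show
\[
\int_{\mathbb{R}^d\setminus E}T_l^{**}b(x)\,dx\lesssim l\|\Omega\|_{L^\infty(S^{d-1})}\|f\|_1.
\]
Using $\sup_k|\sum_Q c_Q(k)|\le\sum_Q\sup_k|c_Q(k)|$ and the mean-zero of each $b_Q$, this reduces to proving that for every cube $Q$ and every $y\in Q$,
\[
\sum_{j\in\mathbb{Z}}\int_{\mathbb{R}^d}\bigl|K_j*\psi_{j-l}(x-y)-K_j*\psi_{j-l}(x-c_Q)\bigr|\,dx\lesssim l\|\Omega\|_{L^\infty(S^{d-1})}.
\]

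The kernel estimates driving this are: (i) $K_j*\psi_{j-l}$ is supported in $\{2^{j-2}\le|x|\le 2^{j+2}\}$ and has $L^1$ norm $\lesssim\|\Omega\|_{L^\infty(S^{d-1})}$; and (ii) writing $\nabla(K_j*\psi_{j-l})=K_j*\nabla\psi_{j-l}$, the bounds $\|K_j\|_\infty\lesssim\|\Omega\|_{L^\infty(S^{d-1})}2^{-jd}$ and $\|\nabla\psi_{j-l}\|_1\lesssim 2^{l-j}$ yield $\|\nabla(K_j*\psi_{j-l})\|_\infty\lesssim\|\Omega\|_{L^\infty(S^{d-1})}2^{l-j(d+1)}$. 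Setting $j_Q:=\log_2\ell(Q)$, split the $j$-sum into three ranges. For $j\le j_Q-C$ the two translated kernels have disjoint supports, contributing zero. For the transition range $j_Q-C<j\le j_Q+l+C'$ (about $l$ indices) use (i), contributing $\lesssim\|\Omega\|_{L^\infty(S^{d-1})}$ per index and $\lesssim l\|\Omega\|_{L^\infty(S^{d-1})}$ in total. For $j>j_Q+l+C'$ combine (ii) with $|y-c_Q|\lesssim\ell(Q)$ and the support volume $\lesssim 2^{jd}$ to get $\lesssim\|\Omega\|_{L^\infty(S^{d-1})}\ell(Q)2^{l-j}$ per index, a geometric series summing to $\lesssim\|\Omega\|_{L^\infty(S^{d-1})}$. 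Finally sum over $Q$ using $\sum_Q\|b_Q\|_1\lesssim\|f\|_1$.

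The main obstacle will be arranging the three-range split so that exactly one factor of $l$ appears (from the transition range of length $\sim l$) and so that the geometric tail at scales $2^j\gg 2^l\ell(Q)$ does not spoil this accounting. A secondary point is ensuring that the $L^2$ bound for $T_l^{**}$ is uniform in $l$; this is precisely what Lemma~\ref{lem2.4} provides once one invokes the classical $L^2$-boundedness of the lacunary maximal operator associated to $T_\Omega$.
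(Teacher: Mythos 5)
Your plan replicates the paper's proof in expanded form. Both arguments rest on the same two ingredients: (a) the $L^2$ bound with norm $\lesssim\|\Omega\|_{L^\infty(S^{d-1})}$, which you and the paper both extract from Lemma~\ref{lem2.4}; and (b) the observation that the smoothed kernel is a Calder\'on--Zygmund kernel with modulus of continuity $\omega(t)=\min\{1,2^lt\}$, whose Dini integral $\int_0^1\omega(t)\,dt/t\approx l$ is exactly where the factor $l$ enters. The paper states the size bound $|K^l(x)|\lesssim|x|^{-d}$ and gradient bound $|\nabla K^l(x)|\lesssim 2^l|x|^{-d-1}$ for the aggregated kernel $K^l=\sum_jK_j*\psi_{j-l}$ and then cites the standard machinery (Grafakos, Ch.~4.3); you run the Calder\'on--Zygmund decomposition explicitly, block by block, with your three-range split playing precisely the role of the Dini integral. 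So same approach, different level of detail.

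One line of your write-up needs repair. The reduction to
\begin{equation*}
\sum_{j\in\mathbb{Z}}\int_{\mathbb{R}^d}\bigl|K_j*\psi_{j-l}(x-y)-K_j*\psi_{j-l}(x-c_Q)\bigr|\,dx\lesssim l\|\Omega\|_{L^{\infty}(S^{d-1})}
\end{equation*}
is false as stated, and the justification you give for the low-$j$ range is wrong: if the two translated kernels had disjoint supports, the integral would equal $\int|A|+\int|B|\lesssim\|\Omega\|_{L^\infty(S^{d-1})}$, not $0$, and then the sum over $j\le j_Q-C$ diverges. What actually kills the low-$j$ contribution is the restriction $x\in\mathbb{R}^d\setminus E$, which you had in the preceding line but dropped in passing to the Hörmander integral: for $j$ small both translated kernels are supported near $Q$, hence inside $E$ (for this you should take $E=\bigcup_Q cQ$ with $c\gtrsim\sqrt d$, not $c=2$, so that the supports of $K_j*\psi_{j-l}(\cdot-y)$ for $y\in Q$ and $j\le j_Q-C$ really fall inside $E$). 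Keeping the restriction, the transition range of length $\sim l$ and the geometric far range then go through exactly as you describe.
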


\begin{proof} The proof of Lemma \ref{lem2.5} was essentially given in \cite[Section 2]{ler5}, with a different definition of $T^l$. We present the proof here for the sake of self-contained. Without loss of generality, we may assume that $\|\Omega\|_{L^{\infty}(S^{d-1})}=1$. Obviously, $K^l$ satisfies the size condition that
\begin{eqnarray}\label{eq2.lem2.21}|K^l(x)|&\leq& \sum_{j\in\mathbb{Z}}\int_{\mathbb{R}^d}|K_j(x-y)||\psi_{j-l}(y)|dy\\
&\lesssim&\sum_{j\in\mathbb{Z}}\frac{1}{|x|^d}\chi_{\{2^{j-2}\leq |x|\leq 2^{j+2}\}}(x)\int_{\mathbb{R}^d}|\psi_{j-l}(y)|dy\lesssim
|x|^{-d}.\nonumber
\end{eqnarray}
On the other hand, a trivial computation leads to that
\begin{eqnarray*}
|\nabla K^l(x)|&\leq& \sum_{j\in\mathbb{Z}}\int_{\mathbb{R}^d}|K_j(y)||\nabla \psi_{j-l}(x-y)|dy\\
&\lesssim&\sum_{j\in\mathbb{Z}}2^{l-j}\frac{1}{|x|^d}\chi_{\{2^{j-2}\leq |x|\leq 2^{j+2}\}}(|x|)\int_{\mathbb{R}^d}|K_j(y)|dy\lesssim 2^l
|x|^{-d-1}.
\end{eqnarray*}
Therefore, for $x,\,y\in\mathbb{R}^d$ with $|y|\leq |x|/4$, we have that
\begin{eqnarray}\label{eq2.lem2.22}|K^l(x-y)-K^l(x)|\lesssim \frac{1}{|x|^d}\omega\big(\frac{|y|}{x}\big),
\end{eqnarray}
with
$$\omega(t)=\min\{1,\,2^lt\}.$$
Obviously,
$$\int_0^1\omega(t)\frac{dt}{t}\lesssim l.
$$
Lemma \ref{lem2.4} tells us that
\begin{eqnarray}\label{eq2.lem2.23}\|T_lf\|_{L^2(\mathbb{R}^d)}\lesssim \|f\|_{L^2(\mathbb{R}^d)}.
\end{eqnarray}
Inequalities (\ref{eq2.lem2.21})-(\ref{eq2.lem2.23}), via argument in \cite[Chap. 4.3]{gra}, implies that
$$\|T_l^{**}f\|_{ L^{1,\,\infty}(\mathbb{R}^d)}\lesssim l\|f\|_{L^1(\mathbb{R}^d)}.
$$
This completes the proof of Lemma \ref{lem2.5}.
\end{proof}

\begin{lemma}\label{lem2.6} Let $\Omega$ be homogeneous of degree zero and have mean value zero.
Suppose that $\Omega\in L^{\infty}(S^{d-1})$ and $l\in\mathbb{Z}$. Then $\mathcal{M}_{T_l^{**}}$ is bounded from $L^{1}(\mathbb{R}^d)$ to $L^{1,\,\infty}(\mathbb{R}^d)$ with bound $Cl\|\Omega\|_{L^{\infty}(S^{d-1})}$.
\end{lemma}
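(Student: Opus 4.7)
The plan is to establish a pointwise Cotlar-type inequality
\begin{equation*}
\mathcal{M}_{T_l^{**}}f(x)\lesssim T_l^{**}f(x)+l\,\|\Omega\|_{L^\infty(S^{d-1})}\,Mf(x),\qquad \text{a.e.}\ x\in \mathbb{R}^d,
\end{equation*}
and then to invoke the weak $(1,1)$ bound for $T_l^{**}$ from Lemma \ref{lem2.5} together with the Hardy--Littlewood weak-$(1,1)$ bound for $M$. Since the $L^{1,\infty}$ quasinorm is sublinear up to an absolute constant, this yields $\|\mathcal{M}_{T_l^{**}}f\|_{L^{1,\infty}(\mathbb{R}^d)}\lesssim l\,\|\Omega\|_{L^\infty(S^{d-1})}\|f\|_{L^1(\mathbb{R}^d)}$.

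To prove the pointwise inequality, fix a cube $Q\ni x$ and a point $y\in Q$, and let $k_Q\in \mathbb{Z}$ satisfy $2^{k_Q}\sim \ell(Q)$. For each $k\in\mathbb{Z}$, write $K^l_{\geq k}=K^l-K^l_{<k}$, where $K^l_{<k}=\sum_{j<k}K_j*\psi_{j-l}$ is supported in $|u|\leq 2^{k+2}$. I then distinguish three regimes for $k$: if $k\leq k_Q-2$, then for $z\notin 3Q$ we have $|y-z|\geq \ell(Q)>2^{k+2}$, so $K^l_{<k}(y-z)=0$ and the contribution collapses to $T_l(f\chi_{\mathbb{R}^d\setminus 3Q})(y)$, independent of $k$; if $|k-k_Q|\leq C$, the local piece $K^l_{\geq k}*(f\chi_{3Q})(y)$ involves only $O(1)$ dyadic annuli and is dominated by $CMf(x)$ via the size estimate \eqref{eq2.lem2.21}; if $k\geq k_Q+4$, the local piece vanishes entirely because $|y-z|\leq 2\ell(Q)<2^{k-2}$ for $z\in 3Q$. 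Combining these,
\begin{equation*}
T_l^{**}(f\chi_{\mathbb{R}^d\setminus 3Q})(y)\leq |T_l(f\chi_{\mathbb{R}^d\setminus 3Q})(y)|+T_l^{**}f(y)+CMf(x).
\end{equation*}
A Cotlar-type estimate for the Calder\'on--Zygmund operator $T_l$, whose kernel $K^l$ has modulus of continuity $\omega(t)=\min\{1,2^l t\}$ with Dini norm $\lesssim l$ (cf.\ the proof of Lemma \ref{lem2.5}), then yields $|T_l(f\chi_{\mathbb{R}^d\setminus 3Q})(y)|\lesssim T_l^{**}f(x)+l\,Mf(x)$, using $|T_lf(x)|\leq T_l^{**}f(x)$. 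Taking suprema over $y\in Q$ and $Q\ni x$ produces the desired pointwise inequality.

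The main obstacle is the final Cotlar-type bound for $T_l$: the naive pointwise estimate $\int_{\mathbb{R}^d\setminus 3Q}|K^l(y-z)||f(z)|\,dz$ does \emph{not} give the correct $l\,Mf(x)$ control, since the resulting sum over dyadic annuli $2^i\geq \ell(Q)$ diverges as a pointwise bound by $Mf(x)$. One must instead compare $T_l(f\chi_{\mathbb{R}^d\setminus 3Q})(y)$ with $T_lf(x)$, splitting the difference into a local part (over $z\in 3Q$, controlled by $Mf(x)$ through \eqref{eq2.lem2.21}) and a far part (over $z\notin 3Q$, controlled by $l\,Mf(x)$ through the Dini estimate derived in \eqref{eq2.lem2.22}). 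This is the standard Cotlar argument for Dini-continuous singular integrals; the key feature is that the Dini norm $\int_0^1\omega(t)\,dt/t\lesssim l$ propagates the factor $l$ (rather than $2^l$) through the telescoping sum over dyadic scales, matching the weak-type bound of Lemma \ref{lem2.5}.
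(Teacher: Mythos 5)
Your overall plan is the same as the paper's: derive the pointwise Cotlar-type inequality $\mathcal{M}_{T_l^{**}}f(x)\lesssim T_l^{**}f(x)+l\,Mf(x)$ (this is precisely (\ref{eqn2.7})) and then conclude via Lemma~\ref{lem2.5} and the weak $(1,1)$ bound for $M$. But there is a real gap in how you derive that pointwise inequality. In regimes 2 and 3 of your $k$-decomposition you arrive at a bound containing $T_l^{**}f(y)$, not $T_l^{**}f(x)$: you write
\begin{equation*}
T_l^{**}(f\chi_{\mathbb{R}^d\setminus 3Q})(y)\leq |T_l(f\chi_{\mathbb{R}^d\setminus 3Q})(y)|+T_l^{**}f(y)+CMf(x),
\end{equation*}
and then only apply a Cotlar argument to the first term $|T_l(f\chi_{\mathbb{R}^d\setminus 3Q})(y)|$. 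The middle term $T_l^{**}f(y)$ is left untouched. Since $\mathcal{M}_{T_l^{**}}f(x)$ is an $L^\infty$-type maximal function, passing to the supremum over $y\in Q$ and $Q\ni x$ turns $T_l^{**}f(y)$ into $\sup_{Q\ni x}\sup_{y\in Q}T_l^{**}f(y)$, which is essentially $\|T_l^{**}f\|_{L^\infty}$ and is not controlled by $T_l^{**}f(x)+lMf(x)$. So the inequality as written does not imply the claimed pointwise bound, and there is no way to conclude from it.

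The repair is exactly what the paper does: in the regimes where the tail $\sum_{j\geq k}K_j*\psi_{j-l}*f$ remains, one must evaluate it at the \emph{anchor} point $x$, not at $y$, and absorb the difference via the Dini estimate for $K^l$. Concretely, the paper inserts an intermediate ball $B_x\supset 3Q$ centered at $x$ and writes $T_l^{**}(f\chi_{\mathbb{R}^d\setminus 3Q})(\xi)$ as the sum of a bounded local piece over $B_x\setminus 3Q$ (which gives $Mf(x)$), the far part evaluated at $x$ (which gives $T_l^{**}f(x)+Mf(x)$), and the difference $\sup_k|\sum_{j\ge k}(T_{l,j}(f\chi_{\mathbb{R}^d\setminus B_x})(\xi)-T_{l,j}(f\chi_{\mathbb{R}^d\setminus B_x})(x))|$, which is bounded by $\sum_{i\geq 1}\min\{1,2^{l-i}\}\,Mf(x)\lesssim l\,Mf(x)$; this is the sole source of the factor $l$. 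You correctly carry out that $y\!\to\!x$ comparison for the $T_l$ term arising from the $k\leq k_Q-2$ regime, but omit it precisely in the regimes that produce $T_l^{**}f(y)$. Once that comparison is inserted in regimes 2 and 3 as well, your argument coincides with the paper's.
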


\begin{proof} The arguement is similar to the proof  of Lemma 2.3 in \cite{ler3}. Again we assume that $\|\Omega\|_{L^{\infty}(S^{d-1})}=1$.
Let $Q\subset \mathbb{R}^d$ be a cube, $x, \xi\in Q$, and $B_x$ be the closed ball centered at $x$ and having radius $r_B=2d\ell(Q)$. Then $3Q\subset B_x$. For $j\in \mathbb{Z}$ and $l\in \mathbb{N}$, let $T_{l,j}$ be the operator defined by  \begin{eqnarray}\label{eq2.tlj}
T_{l,j}f(x)=K_j*\psi_{j-l}*f(x).\end{eqnarray}
Write
\begin{eqnarray*}
|T_l^{**}(f\chi_{\mathbb{R}^d\backslash 3Q})(\xi)| &\leq&  |T_l^{**}(f\chi_{B_x\backslash 3Q})(\xi)| + |T_l^{**}(f\chi_{\mathbb{R}^d\backslash B_x})(x)|\\
&& + \sup_{k\in\mathbb{Z}}\Big|\sum_{j\geq k}\Big(T_{l,j}(f\chi_{\mathbb{R}^d\backslash B_x})(\xi)-T_{l,j}(f\chi_{\mathbb{R}^d\backslash B_x})(x)\Big)\Big|.
\end{eqnarray*}
Note that $$|K_j*\psi_{j-l}(y)|\lesssim 2^{-jd}\|\Omega\|_{L^{\infty}(S^{d-1})}\chi_{\{y: 2^{j-2}<|y|\leq 2^{j+2}\}},$$ we have
\begin{eqnarray}\label{eqn2.5}
&&|T_l^{**}(f\chi_{B_x\backslash 3Q})(\xi)| \leq\sum_{j\in \mathbb{Z}}\int_{B_x\backslash 3Q} |K_j*\psi_{j-l}(\xi-y)| |f(y)|dy\\
&&\quad\quad \lesssim \sum_{j: 2^j\approx \ell(Q)}2^{-jd}\int_{B_x}  |f(y)|dy \lesssim Mf(x).\nonumber
\end{eqnarray}
Let $k_0\in\mathbb{Z}$ such that $2^{k_0-1}\leq r_B< 2^{k_0}$. Since
\begin{eqnarray*}
&&\sum_{j\in \mathbb{Z}}\int_{|y-x|>r_B} K_j*\psi_{j-l}(x-y) f(y)dy\\
&&\quad\quad = \Big(\sum_{j\geq k_0+2}\int_{\mathbb{R}^d}  + \sum_{j= k_0-2}^{k_0+1}\int_{r_B<|y-x|\leq 2^{j+2}}\Big) K_j*\psi_{j-l}(x-y) f(y)dy,
\end{eqnarray*}
we have
\begin{eqnarray}\label{eqn2.6}
|T_l^{**}(f\chi_{\mathbb{R}^d\backslash B_x})(x)| \leq  |T_l^{**}(f)(x)| + Mf(x).
\end{eqnarray}
Also note that for any $k\in\mathbb{Z}$,
\begin{eqnarray*}
&&\Big|\sum_{j\geq k}\int_{|y-x|>r_B} \Big(K_j*\psi_{j-l}(\xi-y)- K_j*\psi_{j-l}(x-y)\Big)f(y)dy\Big|\\
&&\quad \lesssim\sum_{i=1}^\infty \sum_{j: 2^j\approx 2^ir_B}\int_{2^{i-1} r_B<|y-x|\leq 2^{i}r_B} 2^{-jd}\|\Omega\|_{L^{\infty}(S^{d-1})} \min\{1, \frac{r_B}{2^{j-l}}\} |f(y)|dy\\
\\
&&\quad \lesssim\sum_{i=1}^\infty \min\{1, 2^{l-i}\}  Mf(x),
\end{eqnarray*}
which yields
\begin{eqnarray*}
\sup_{k\in\mathbb{Z}}\Big|\sum_{j\geq k}\Big(T_{l,j}(f\chi_{\mathbb{R}^d\backslash B_x})(\xi)-T_{l,j}(f\chi_{\mathbb{R}^d\backslash B_x})(x)\Big)\Big| \lesssim  l Mf(x).
\end{eqnarray*}
This, together with (\ref{eqn2.5}) and (\ref{eqn2.6}), gives that
\begin{eqnarray}\label{eqn2.7}
\mathcal{M}_{T_l^{**}}f(x) \lesssim  |T_l^{**}(f)(x)|  + l Mf(x).
\end{eqnarray}
From this and Lemma \ref{lem2.5}, we get the lemma.
\end{proof}

\subsection{Proof of Theorem \ref{dingli2.1}}
We employ the ideas in the proof of Lemma 2.3 in \cite{ler5}, together with the ideas used in \cite{bhmo} and some more refined decomposition. Let $\mathscr{D}$ and $\mathscr{D}'$ be two dyadic lattices  and $\mathcal{F}$ be a finite family of cubes $Q$ from $\mathscr{D}$ such that $3Q\in \mathscr{D}'$. Set
$$M_{\lambda,\,T_{\Omega}^{**}}^{\mathcal{F}}f(x)=\Bigg\{ \begin{array}{ll}
\max_{Q\ni x,\,Q\in \mathcal{F}}\big(T_{\Omega}^{**}(f\chi_{\mathbb{R}^d\backslash 3Q})\chi_{Q}\big)^*(\lambda |Q|),\,&x\in \cup_{Q\in\mathcal{F}}Q\\
0,\,&\hbox{otherwise}.\end{array}$$
As it was pointed out in \cite[p. 1590]{ler5}, it suffices to prove (\ref{eqn2.8}) with $M_{\lambda,T_{\Omega}^{**}}$ replaced by $M^{\mathcal{F}}_{\lambda,T_{\Omega}^{**}}$.

By  homogeneity, it suffices to prove (\ref{eqn2.8}) for the case of $\alpha=1$. Let $M^{\mathscr{D'}}$ be the maximal operator defined by
$$
M^{\mathscr{D'}}f(x)=\sup_{Q\ni x,\,Q\in\mathscr{D'}}\langle |f|\rangle_Q.
$$
Now let $f\in L^1(\mathbb{R}^d)$, $0<\lambda<1$. Decompose $\{x\in\mathbb{R}^d:\,M^{\mathscr{D}'}f(x)>\lambda^{-1}\}$ as $\{x\in\mathbb{R}^d:\,M^{\mathscr{D}'}f(x)>\lambda^{-1}\}=\cup_{P\in\mathcal{P}} P$, with $P$ the maximal cubes in $\mathscr{D}'$ such that
$$
\lambda^{-1} < \frac{1}{|P|}\int_P|f(x)|dx .
$$
For each $P\in\mathcal{P}$, let
$$
b_P(x)=f(x)\chi_{P(|f|> 2^{c_1}\lambda^{-1})}(x),
$$
and $0<c_1<\frac 14$ be a constant which will be chosen later.
We decompose $f$ as $f=g+b= g+ \sum_{P\in\mathcal{P}}b_P$, where
$$
g(x)=f\chi_{\mathbb{R}^d\backslash \cup_{P\in\mathcal{P}}P}(x)+\sum_{P\in\mathcal{P}}f(x)\chi_{P(|f|\leq 2^{c_1}\lambda^{-1})}(x).
$$
We further decompose $b_P$ into $b_P=\sum_{l\in\mathbb{N}} b^l_P$, where
$$
b^l_P(x)=f(x)\chi_{P(2^{c_12^{l-1}}\lambda^{-1}<|f|\leq 2^{c_12^l}\lambda^{-1})}(x).
$$
For $l\in\mathbb{N}$, let
$$b_{P,\,1}^l(x)=\frac{1}{|P|}\int_{P(2^{c_12^{l-1}}\lambda^{-1}<|f|\leq 2^{c_12^l}\lambda^{-1})}f(y)dy\chi_{P}(x),$$
and
$$b_{P,2}^l(x)=b^l_P(x)-b_{P,1}^l(x).$$
Then
$$b(x)= \sum_{P\in\mathcal{P}}\sum_{l=1}^{\infty}(b_{P,1}^l(x)+b_{P,2}^l(x)).$$
For each fixed $l\in\mathbb{N}$, set
$$
G_1^l(x)=\sum_{P\in\mathcal{P}}b_{P,1}^l(x),\,\,G_2^l(x)=\sum_{P\in\mathcal{P}}b_{P,2}^l(x),\,G^l(x)=G_1^l(x)
+G_2^l(x).
$$
Obviously, $b(x)=\sum_{l\in\mathbb{N}}G^l(x)$.
For fixed $j\in\mathbb{Z}$, set $\mathcal{P}_j=\{P:\, P\in\mathcal{P},\,\ell(P)=2^j\}$. Let $B_{j}(x)=\sum_{P\in \mathcal{P}_j}b_P(x)$ and $B_{j,\,1}^l(x)=\sum_{P\in \mathcal{P}_j}b_{P,1}^l(x)$,
$B_{j,2}^l(x)=\sum_{P\in\mathcal{P}_j}b_{P,\,2}^l(x)$ for $l\in\mathbb{N}$, and $B_{j}^l(x)=B_{j,1}^l(x)+B_{j,2}^l(x).$
Note that $G^l(x)=\sum_{j}B_j^l(x)$, and we can see that
\begin{itemize}
\item [\rm (i)] for all cube $P\in\mathcal{P}$ and $l\in\mathbb{N}$, $\int b_{P,2}^l(x)dx=0,$ and
\begin{eqnarray}\label{eq2.b2l}
\|b_{P,2}^l\|_{L^1(\mathbb{R}^d)}\lesssim \int_{P(2^{c_12^{l-1}}\lambda^{-1}<|f(x)\leq 2^{c_12^l}\lambda^{-1})}|f(x)|dx;
\end{eqnarray}
\item[\rm (ii)] for each fixed $l$, $\|G^l\|_{L^2(\mathbb{R}^d)}^2\lesssim 2^{c_12^l}\lambda^{-1} \|f\|_{L^1(\mathbb{R}^d)};$
\item [\rm (iii)] $\big\|\sum_{l=1}^{\infty}\sum_{P\in\mathcal{P}}|b_{P,1}^l|\big\|_{L^{\infty}(\mathbb{R}^d)}\lesssim \lambda^{-1}$, and  \begin{eqnarray}\label{eq:2.1.first}&&\Big\|\sum_{l=1}^{\infty}|G_1^l|\Big\|_{L^{1}(\mathbb{R}^d)}\lesssim \|f\|_{L^{1}(\mathbb{R}^d)},\,\,\,\Big\|\sum_{l=1}^{\infty}|G_{1}^l|\Big\|_{L^{2}(\mathbb{R}^d)}^2\lesssim \lambda^{-1}\|f\|_{L^{1}(\mathbb{R}^d)}.\end{eqnarray}
\item [\rm (iv)] $\|g\|_{L^{\infty}(\mathbb{R}^d)}\lesssim \lambda^{-1}$.
\end{itemize}

Now let $l_0\in\mathbb{N}$ such that $\log(\frac{1}{\lambda})+1<\kappa l_0\leq\log(\frac{1}{\lambda})+2$, with $\kappa$ the positive constant in Lemma \ref{lem2.4}.
A straightforward computation involving Lemma \ref{lem2.2} and Lemma \ref{lem2.4} leads to that
\begin{eqnarray*}
|\{x\in\mathbb{R}^d:\,M_{\lambda/4, T_{\Omega}^{**}-T_{l_0}^{**}}^{\mathcal{F}}g(x)>1/4\}|
\lesssim\lambda^{-1}2^{-2\kappa l_0}\|g\|_{L^2(\mathbb{R}^d)}^2 \lesssim\|f\|_{L^1(\mathbb{R}^d)}.\nonumber
\end{eqnarray*}
By Lemma \ref{lem2.6} and the observation  that
$$
M_{\lambda/4,T_{l_0}^{**}}^{\mathcal{F}}g(x)\leq \mathcal{M}_{T_{l_0}^{**}}g(x),
$$
we then have
$$
|\{x\in\mathbb{R}^d:\, M_{\lambda/4,T_{l_0}^{**}}^{\mathcal{F}}g(x)> 1/4\}|\lesssim l_0\|g\|_{L^1(\mathbb{R}^d)}\lesssim
\big(1+\log \frac{1}{\lambda}\big)\|f\|_{L^1(\mathbb{R}^d)}.
$$
Therefore,
\begin{eqnarray}\label{equation2.gx} &&
|\{x\in\mathbb{R}^d:\, M_{\lambda/2,T_{\Omega}^{**}}^{\mathcal{F}}g(x)> 1/2\}|\lesssim l_0\|g\|_{L^1(\mathbb{R}^d)}\lesssim
\big(1+\log \frac{1}{\lambda}\big)\|f\|_{L^1(\mathbb{R}^d)}.
\end{eqnarray}
We now estimate $ M_{\lambda/2, T^{**}_{\Omega}}^{\mathcal{F}}b$.
Set $E=\cup_{P\in\mathcal{P}}36dP$ and $E^*=\{x\in\mathbb{R}^d:\,M^{\mathscr{D}}\chi_{E}(x)>\frac{\lambda}{32}\}$. We then have that
$$
|E^*|\lesssim \lambda^{-1}|E|\lesssim\|f\|_{L^1(\mathbb{R}^d)}.
$$
If we can prove that
\begin{eqnarray}\label{eqn2.11}
&&|\{x\in\mathbb{R}^d\backslash E^*:\, M_{\lambda/2, T^{**}_{\Omega}}^{\mathcal{F}}b(x)> 1/2\}|\\
&&\quad\lesssim
 (1+\log(\frac 1\lambda))\int_{ \mathbb{R}^d }|f(x)|dx+\int_{ \mathbb{R}^d }\Phi(|f(x)|)dx,\nonumber
\end{eqnarray}
our desired estimate (\ref{eqn2.8})  then follows from   (\ref{equation2.gx})--(\ref{eqn2.11}) directly.

We now prove (\ref{eqn2.11}).
For each $l\in\mathbb{N}$ and $j\in\mathbb{Z}$, let $T_{l,j}$ be defined as (\ref{eq2.tlj}).
Recall $T_{\Omega,j}(f)(x)=K_j*f(x)$. Now let $Q\in\mathcal{F}$, and
$${\rm D}_{21,Q} b(x)=\sum_{l=1}^{\infty}\sup_{k}\Big|\sum_{j>k}(T_{\Omega,j}-T_{2^{l+N_0},j})(G^l\chi_{\mathbb{R}^d\backslash 3Q})(x)
\Big|,$$
$${\rm D}_{22,Q}b(x)=\sup_{k}\Big|\sum_{j>k}\sum_{l=1}^{\infty}\sum_{m=1}^{l}(T_{2^{m+N_0},j}-T_{2^{m+N_0-1},j})(\chi_{\mathbb{R}^d\backslash 3Q}G_1^l)(x)\Big|,$$
$${\rm D}_{23,Q}b(x)=\sup_{k}\Big|\sum_{j>k}T_{2^{N_0},j}\big(\chi_{\mathbb{R}^d\backslash 3Q}b\big)(x)\Big|,$$
$${\rm D}_{24,Q}b(x)=\sup_{k}\Big|\sum_{j>k}\sum_{l=1}^{\infty}(T_{2^{l+N_0},j}-T_{2^{N_0},j})
(G_2^l\chi_{\mathbb{R}^d\backslash 3Q})(x)\Big|,$$
where $N_0\in\mathbb{N}$ is a constant which depends on $\lambda$ and will be chosen later. Then
\begin{eqnarray*}\Big(T^{**}_{\Omega}(b\chi_{\mathbb{R}^d\backslash 3Q})\chi_Q\Big)^*(\frac{\lambda|Q|}{2})&\leq&\big(({\rm D}_{21,Q}b)\chi_{Q}\big)^*(\frac{\lambda|Q|}{8})+
\big(({\rm D}_{22,Q}b)\chi_{Q}\big)^*\big(\frac{\lambda|Q|}{8}\big)\\
&&+\big(({\rm D}_{23,Q}b)\chi_{Q}\big)^*\big(\frac{\lambda|Q|}{8}\big)+\big(({\rm D}_{24,Q}b)\chi_{Q}\big)^*\big(\frac{\lambda|Q|}{8}\big),
\end{eqnarray*}
and
\begin{eqnarray*}M_{\lambda/2, T_{\Omega}^{**}}^{\mathcal{F}}b(x)\leq \sum_{i=1}^4E_i(x),
\end{eqnarray*}
where$$E_i(x)=\Bigg\{ \begin{array}{ll}
\max_{Q\ni x,\,Q\in \mathcal{F}}\big((D_{2i,Q}b)\chi_{Q}\big)^*\big(\frac{\lambda |Q|}{8}\big),\,&x\in \cup_{Q\in\mathcal{F}}Q\\
0,\,&\hbox{otherwise}.\end{array}$$
Now set
$$F_1(x)=M_2
\Big(\sum_{l=1}^{\infty}\sup_{k}\big|\sum_{j>k}(T_{2^{l+N_0},j}-T_{\Omega,j})G^l\big|\Big)(x),$$
$$F_2(x)=\sup_{Q\ni x}\Big(\frac{1}{|Q|}\int_Q\Big(\sum_{l=1}^{\infty}\sup_{k}\big|\sum_{j>k}(T_{2^{l+N_0},j}-T_{\Omega,j})(
\chi_{Q}G^l)(y)\big|\Big)^2dy\Big)^{1/2},$$
where $M_2f(x)=\big(M(|f|^2)(x)\big)^{1/2}$. Recall that $M_2$ is bounded from $L^2(\mathbb{R}^d)$ to $L^{2,\,\infty}(\mathbb{R}^d)$, and the fact that $2^{l+N_0}\geq 2^l+2^{N_0}$, it follows from Lemma \ref{lem2.4} that
\begin{eqnarray*}\|F_1\|_{L^{2,\,\infty}(\mathbb{R}^d)}&\lesssim & \sum_{l=1}^{\infty}\big\|\sup_{k}\big|\sum_{j>k}(T_{2^{l+N_0},j}-T_{\Omega,j})G^l\big|\big \|_{L^2(\mathbb{R}^d)}\\
&\lesssim&\sum_{l=1}^{\infty}2^{-\kappa 2^{l+N_0}}\|G^l\|_{L^2(\mathbb{R}^d)}\\
&\lesssim&\sum_{l=1}^{\infty}2^{-\kappa 2^{l+N_0}}\left(2^{c_12^l}\lambda^{-1}\|f\|_{L^1(\mathbb{R}^d)}\right)^{1/2}\\
&\lesssim&2^{-\kappa 2^{N_0}}\left(\lambda^{-1}\|f\|_{L^1(\mathbb{R}^d)}\right)^{1/2},
\end{eqnarray*}
if we choose $0<c_1<\kappa$. Another application of Lemma \ref{lem2.4} leads to that
\begin{eqnarray*}
F_2(x)&\leq &\sum_{l=1}^{\infty}\sup_{Q\ni x}\Big(\frac{1}{|Q|}\int_Q\Big(\sup_{k}\big|\sum_{j>k}(T_{2^{l+N_0},j}-T_{\Omega,j})(
\chi_{Q}G^l)(y)\big|\Big)^2dy\Big)^{1/2}\\
&\lesssim &\sum_{l=1}^{\infty}2^{-\kappa 2^{l+N_0}}M_2G^l(x).
\end{eqnarray*}
Observing that for a constant $\sigma>0$,
$$\Big\|\sum_{k=1}^{\infty}f_k\Big\|_{L^{2,\,\infty}(\mathbb{R}^d)}^2\lesssim \sum_{k=1}^{\infty}2^{\sigma k}\|f_k\|_{L^{2,\,\infty}(\mathbb{R}^d)}^2,$$
and $2^{-2\kappa 2^{l+N_0}} \le 2^{-2\kappa2^l}2^{-2\kappa2^{N_0}}$, we then get  that
\begin{eqnarray*}\|F_2\|_{L^{2,\,\infty}(\mathbb{R}^d)}^2&\lesssim & \sum_{l=1}^{\infty}2^{-2\kappa2^{l+N_0}}
2^{c_1l}\|M_2G^l\|^2_{L^{2,\,\infty}(\mathbb{R}^d)}\\
&\lesssim&\sum_{l=1}^{\infty}2^{-2\kappa 2^{l+N_0}} 2^{2c_12^l}\lambda^{-1}\|f\|_{L^1(\mathbb{R}^d)} \\
&\lesssim&2^{-2\kappa 2^{N_0}}\left(\lambda^{-1}\|f\|_{L^1(\mathbb{R}^d)}\right).
\end{eqnarray*}
On the other hand, it follows from  Chebyshev's inequality that for each cube $Q$ containing $x$,
\begin{eqnarray*}\big(({\rm D}_{21,Q}b)\chi_{Q}\big)^*\big(\frac{\lambda|Q|}{8}\big)&\lesssim& \Big(\sum_{l=1}^{\infty}\sup_{k}\Big|\sum_{j>k}(T_{\Omega,j}-T_{2^{l+N_0},j})G^l\Big|\chi_{Q}\Big)^*(\frac{\lambda|Q|}{16})\\
&& +\big(\sum_{l=1}^{\infty}\sup_{k}\Big|\sum_{j>k}(T_{\Omega,j}-T_{2^{l+N_0},j})(G^l\chi_{3Q})\Big|\chi_{Q}\Big)^*(\frac{\lambda|Q|}{16})\\
&\lesssim& \lambda^{-\frac{1}{2}}\big(F_1(x)+F_2(x)\big),
\end{eqnarray*}
and so
\begin{eqnarray}\label{eqn2.12}
|\{x\in\mathbb{R}^d:\, E_1(x)>1/8\}|&\lesssim&\lambda^{-1}\big(\|F_1\|_{L^{2,\,\infty}(\mathbb{R}^d)}^2+
\|F_2\|_{L^{2,\,\infty}(\mathbb{R}^d)}^2\big)\\
&\lesssim&\|f\|_{L^1(\mathbb{R}^d)},\nonumber
\end{eqnarray}
if we choose $N_0=\lfloor \log \big(\frac{1}{\kappa}\big(1+\log (\frac{1}{\lambda})\big)\rfloor+1$.

The estimate for term $E_2$ is similar to the estimate for $E_1$. In fact, let
$$F_3(x)=M_2
\Big(\sup_{k}\Big|\sum_{j>k}\sum_{l=1}^{\infty}\sum_{m=1}^l(T_{2^{m+N_0},j}-T_{2^{m+N_0-1},j})G^l_1\Big|\Big)(x),$$
and
$$F_4(x)=\sup_{Q\ni x}\Big(\frac{1}{|Q|}\int_Q\Big(\sup_{k}\Big|\sum_{j>k}\sum_{l=1}^{\infty}\sum_{m=1}^l(T_{2^{m+N_0},j}-
T_{2^{m+N_0-1},j})
(
\chi_{Q}G^l_1)(y)\Big|\Big)^2dy\Big)^{1/2},$$
Then $$E_2(x)\leq F_3(x)+F_4(x).$$
Observe that
$$F_3(x)\leq M_2\Big(\sum_{m=1}^{\infty}\sup_{k}
\Big|\sum_{j>k}(T_{2^{m+N_0},j}-T_{2^{m+N_0-1},j})\big(\sum_{l=m}^{\infty}G^l_1\big)\Big|\Big)(x),$$
it follows from (\ref{eqn2.approx}) and (\ref{eq:2.1.first}) that
\begin{eqnarray*}\|F_3\|_{L^{2,\,\infty}(\mathbb{R}^d)}
&\lesssim&\sum_{m=1}^{\infty}\big\|H_{m+N_0}^{**}\big(\sum_{l=m}^{\infty}G^l_1\big)\big\|_{L^2(\mathbb{R}^d)}\\
&\lesssim&\sum_{m=1}^{\infty}2^{-\kappa 2^{m+N_0-1}}\big(\lambda^{-1}\|f\|_{L^1(\mathbb{R}^d)}\big)^{1/2}\\
&\lesssim&2^{-\kappa 2^{N_0}}\left(\lambda^{-1}\|f\|_{L^1(\mathbb{R}^d)}\right)^{1/2}.
\end{eqnarray*}
Since
\begin{eqnarray*}F_4(x)&\leq &\sum_{m=1}^{\infty}\sup_{Q\ni x}\Big\{\frac{1}{|Q|}\int_Q\Big(H_{m+N_0}^{**}\big(\sum_{l=m}^{\infty}G^l_1
\chi_{Q}\big)(y)\Big)^2dy\Big\}^{\frac{1}{2}}\\
&\lesssim &\sum_{m=1}^{\infty}2^{-\kappa 2^{m+N_0-1}}M_2
\big(\sum_{l=m}^{\infty}G^l_1\big)(x),
\end{eqnarray*}
we then obtain from (\ref{eq:2.1.first}) that
\begin{eqnarray*}\|F_4\|_{L^{2,\,\infty}(\mathbb{R}^d)}&\lesssim & \sum_{m=1}^{\infty}2^{-\kappa2^{m+N_0-1}}
2^{\kappa m}\Big\|M_2\big(\sum_{l=m}^{\infty}G^l_1\big)\Big\|_{L^{2,\,\infty}(\mathbb{R}^d)}\\
&\lesssim&2^{-\kappa 2^{N_0}}\left(\lambda^{-1}\|f\|_{L^1(\mathbb{R}^d)}\right)^{1/2}.
\end{eqnarray*}
Therefore,
\begin{eqnarray*}
|\{x\in\mathbb{R}^d:\, E_2(x)>1/8\}|
&\lesssim& \lambda^{-2}2^{-2\kappa2^{N_0}}\|f\|_{L^{1}(\mathbb{R}^d)}\lesssim\|f\|_{L^1(\mathbb{R}^d)}.
\end{eqnarray*}

To estimate $E_3$, we apply Lemma \ref{lem2.6} to obtain that
\begin{eqnarray*}
|\{x\in\mathbb{R}^d:\, E_3(x)>1/8\}|&\lesssim& |\{x\in\mathbb{R}^d:\,\mathcal{M}_{T_{2^{N_0}}^{**}}b(x)> 1/8\}|\\
&\lesssim & 2^{N_0}\|b\|_{L^{1}(\mathbb{R}^d)}\\
&\lesssim& (1+\log(\frac 1\lambda))\|f\|_{L^1(\mathbb{R}^d)},
\end{eqnarray*}
since $2^{N_0}\lesssim 1+\log(\frac 1\lambda)$.

With the estimates for $E_1$, $E_2$ and $E_3$ above, the proof of (\ref{eqn2.11}) is now reduced to prove that
\begin{eqnarray}\label{eqn2.final}
&&|\{x\in\mathbb{R}^d\backslash E^*:\, E_4(x)>1/8\}|
\lesssim (1+\log(\frac 1\lambda))\int_{\mathbb{R}^d}|f(x)|dx+\int_{\mathbb{R}^d}\Phi(|f(x)|)dx.
\end{eqnarray}
This will be given in the next subsection.
\qed
\subsection{Proof of (\ref{eqn2.final})}
To begin with, we give two useful lemmas.
\begin{lemma}\label{lemm2.8}
Let $\{H_j\}$ be a sequence of kernels supported in $\{x:\, 2^{j-2}\leq |x|\leq 2^{j+2}\}$ and for each $n\in\mathbb{N}$,
\begin{eqnarray}\label{eqn2.15}
\sup_{0\leq \tau\leq n}\sup_{j\in\mathbb{Z}}r^{d+\tau}\Big|(\frac{\partial}{\partial r})^\tau H^j(r\theta)\Big|\leq C_n
\end{eqnarray}
uniformly in $\theta\in {S}^{d-1}$ and $r>0$. Then for  $s>3$ and positive constants $\delta_1$ and $\delta_2$,  one can split $H_j=\Gamma^j_s+(H^j-\Gamma^j_s)$ such that the following properties hold.

(1) Let $Q\in\mathcal{Q}$, a collection of pairwise disjoint dyadic cubes, denote by $f_Q$ an integrable function supported in $Q$ with $\int |f_Q|dx\leq \alpha |Q|$. Let $F_m=\sum_{Q\in\mathcal{Q}, \ell(Q)=2^m}f_Q$. Then
$$
\|\sum_j\Gamma^j_s*F_{j-s}\|_{L^2(\mathbb{R}^d)}^2\lesssim C_0^22^{-2s\delta_1}\alpha \sum_Q\|f_Q\|_{L^1(\mathbb{R}^d)}.
$$

(2) Let $Q\in\mathcal{Q}$ and $\ell(Q)=2^{j-s}$, let $b_Q$ an integrable function supported in $Q$ with $\int b_Q dx=0$. Then
$$
\|(H^j-\Gamma^j_s)*b_{Q}\|_{L^1(\mathbb{R}^d)}^2\lesssim (C_0+C_{8d})2^{-s\delta_2}\|b_Q\|_{L^1(\mathbb{R}^d)}.
$$
\end{lemma}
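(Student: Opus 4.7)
The plan is to take $\Gamma^j_s$ to be the high-frequency correction of $H^j$ obtained by subtracting a mollification at scale $2^{j-s}$. Fix a radial $\phi\in C_c^{\infty}(\mathbb{R}^d)$ with $\int\phi=1$ and $\mathrm{supp}\,\phi\subset\{|x|\le 1\}$, set $\phi_k(x)=2^{-kd}\phi(2^{-k}x)$, and define
\[
\Gamma^j_s := H^j - H^j*\phi_{j-s},\qquad H^j - \Gamma^j_s = H^j*\phi_{j-s}.
\]
Thus $H^j-\Gamma^j_s$ is a mollification of $H^j$ at the (small) scale $2^{j-s}$, while $\Gamma^j_s$ carries the finer-than-$2^{j-s}$ oscillations. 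This is the Seeger / Duoandikoetxea--Rubio de Francia style microlocal splitting, and the two regularity regimes it produces are precisely what the two parts of the lemma need.

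For part (2), the identity $(H^j-\Gamma^j_s)*b_Q = H^j*(\phi_{j-s}*b_Q)$, combined with $\int b_Q=0$, exhibits $g:=\phi_{j-s}*b_Q$ as a zero-mean function supported in a bounded dilate of $Q$ with $\|g\|_{L^1}\lesssim\|b_Q\|_{L^1}$. The smoothness bound $|\nabla H^j(x)|\lesssim C_1\,2^{-j(d+1)}\chi_{\{|x|\sim 2^j\}}$ read off from (\ref{eqn2.15}) with $\tau=1$ then enters through the standard cancellation step: writing $H^j*g(x)=\int[H^j(x-y)-H^j(x-y_Q)]g(y)\,dy$ where $y_Q$ is the center of $Q$, and using $|y-y_Q|\lesssim 2^{j-s}$ together with integration in $x$, produces a $2^{-s}\|b_Q\|_{L^1}$ bound. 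Iterating with higher-order Taylor expansions (up to $\tau=8d$, whence the appearance of $C_{8d}$) sharpens the gain to $2^{-s\delta_2}$ for any preassigned $\delta_2\in(0,1)$.

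For part (1), argue by Plancherel. Repeated integration by parts against (\ref{eqn2.15}) gives $|\widehat{H^j}(\xi)|\lesssim C_n\min(1,(2^j|\xi|)^{-n})$ for $n\le 8d$, while the radiality of $\phi$ forces $\nabla\widehat{\phi}(0)=0$, hence $|1-\widehat{\phi}(2^{j-s}\xi)|\lesssim\min((2^{j-s}|\xi|)^{2},1)$. A three-regime case analysis of $\widehat{\Gamma^j_s}=\widehat{H^j}\bigl(1-\widehat{\phi}(2^{j-s}\cdot)\bigr)$, splitting according to whether $2^j|\xi|\le 1$, $1\le 2^j|\xi|\le 2^s$, or $2^j|\xi|\ge 2^s$, yields both the uniform bound $\|\widehat{\Gamma^j_s}\|_{\infty}\lesssim C_n 2^{-s\delta_1}$ and the summed bound $\sum_j|\widehat{\Gamma^j_s}(\xi)|\lesssim C_n 2^{-s\delta_1}$ (uniformly in $\xi$) for a suitable $\delta_1>0$. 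Cauchy--Schwarz in $j$ then gives
\[
\Bigl\|\sum_j\Gamma^j_s*F_{j-s}\Bigr\|_{L^2}^{2}\lesssim C_0^2\,2^{-2s\delta_1}\sum_j\|F_{j-s}\|_{L^2}^{2}.
\]
In the intended Calder\'on--Zygmund context the $f_Q$'s are cube-averages, so $\|F_{j-s}\|_{\infty}\lesssim\alpha$ and $\|F_{j-s}\|_{L^2}^{2}\le\alpha\|F_{j-s}\|_{L^1}$; summing in $j$ closes the argument. The main technical obstacle is the Fourier-side case analysis in part (1) and the arrangement of $\phi$ so that the vanishing-moment driving $\delta_1$ and the smoothness of $H^j*\phi_{j-s}$ driving $\delta_2$ are jointly optimized; part (2) is routine once the correct decomposition is in hand.
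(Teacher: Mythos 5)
The crucial hypothesis (\ref{eqn2.15}) controls only the \emph{radial} derivatives $(\partial/\partial r)^\tau H^j$; it says nothing about angular regularity. In the intended application $H^j$ is $\Omega(x/|x|)$ times a radially smooth profile with $\Omega$ merely $L^\infty(S^{d-1})$, so $H^j$ can be arbitrarily rough in the angular variable. Your argument silently upgrades this to full $C^1$ (even $C^{8d}$) regularity at both key steps, and both fail without the upgrade. In part (2) you `read off $|\nabla H^j(x)|\lesssim C_1 2^{-j(d+1)}$' from (\ref{eqn2.15}) with $\tau=1$, but this bound controls only $\partial_r H^j$, not $\nabla_\theta H^j$. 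Using only what is available, the best one has for the mollified kernel is $\|\nabla(H^j*\phi_{j-s})\|_{L^1}\le\|H^j\|_{L^1}\|\nabla\phi_{j-s}\|_{L^1}\lesssim C_0\,2^{-(j-s)}$, and your cancellation step then gives $\|(H^j*\phi_{j-s})*b_Q\|_{L^1}\lesssim 2^{j-s}\cdot C_0 2^{-(j-s)}\|b_Q\|_{L^1}=C_0\|b_Q\|_{L^1}$, with no $2^{-s\delta_2}$ gain whatsoever; and this is sharp once $\Omega$ oscillates at angular scale $\ll 2^{-s}$, because the $2^{j-s}$ gained from $\mathrm{diam}(Q)$ is exactly spent differentiating the mollifier. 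Similarly, in part (1) the claim that radial integration by parts yields $|\widehat{H^j}(\xi)|\lesssim C_n(2^j|\xi|)^{-n}$ for all $n\le 8d$ is false: without angular smoothness one cannot integrate by parts on the stationary set $\{\theta\perp\xi\}$, and the honest decay is only $(2^j|\xi|)^{-1+\varepsilon}$. That weaker decay would still yield some $\delta_1>0$ in your Plancherel computation, so (1) is overstated but salvageable; (2) is the genuine gap, and it is not `routine' --- it is the heart of the matter.

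The paper does not give a self-contained proof either; it cites Lemmas 2.1 and 2.2 of Seeger \cite{se} verbatim, and Seeger's construction of $\Gamma^j_s$ is considerably more subtle than an isotropic mollification by $\phi_{j-s}$. It begins with a further angular decomposition of $H^j$ into pieces localized in spherical caps of width a small $s$-dependent power of $2^{-s}$, and then regularizes each piece at scale $2^{j-s}$ along the direction of its cap; the mismatch between the cap width and the mollification scale is what manufactures the $2^{-s\delta_2}$ gain in part (2) in the absence of angular regularity, and the cap decomposition simultaneously feeds the $L^2$ gain in part (1). A single radial $\phi_{j-s}$-mollification cannot substitute for this microlocal construction, so your proposal does not prove the lemma. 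The constant $C_{8d}$ (rather than $C_1$) in part (2) is itself a hint that the underlying argument is not a first-order Taylor expansion against a global gradient bound.
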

Lemma \ref{lemm2.8} is a combination of Lemma 2.1 and Lemma 2.2 in \cite{se}.

Denote by $K^j_m=K_j*\psi_{j-2^m}$,  and $H^j_m=K^j_{m}-K^j_{m-1}$ for $j\in\mathbb{Z}$ and $m\in\mathbb{N}$.  It is easy to see that $H^j_m$ satisfies the assumption in Lemma \ref{lemm2.8} with the bound in (\ref{eqn2.15}), with $C_n$ a constant independent of $m$, see \cite[Section 6]{hon}.

\begin{lemma} \label{lemm2.9}
Let $\Omega$ be homogeneous of degree, and $\Omega\in L^{\infty}(S^{d-1})$. There exist  a sequence of non-negative function $\{v_i\}$ with $\sup_i \|v_i\|_{L^1}\lesssim 1$, and a constant $ \delta_3\in (0,\,1)$, such that for $m,\,s\in\mathbb{N}$,
\begin{eqnarray*}
\sup_{k\in\mathbb{Z}}\Big|\sum_{j\geq k}H^j_m*B^l_{j-s,2}\Big|&\lesssim_d&\sum_{r=0}^{s2^{l+2}-1} M\Big(\sum_{j=r\hbox{mod}(s2^{l+2})}H_m^{j} *B^l_{j-s,2}\Big)\\
&&\quad + s2^l 2^{-\delta_3 s}\sum_{j\in\mathbb{Z}}\sum_{P\in\mathcal{P}_{j-s}} v_j*|b^l_{P,2}|
\end{eqnarray*}

\end{lemma}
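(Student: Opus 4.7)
I plan to prove this by combining the Seeger-type kernel decomposition of Lemma~\ref{lemm2.8} with a smoothing argument for the lacunary maximal operator, performed simultaneously across residue classes modulo $N := s 2^{l+2}$, following the strategy in \cite{hon, bhmo, se}. Apply Lemma~\ref{lemm2.8} with parameter $s$ to decompose $H^j_m = \Gamma^j + E^j$ with $E^j := H^j_m - \Gamma^j$: the smooth part $\Gamma^j$ has Fourier support essentially in $|\xi| \lesssim 2^{s-j}$, while the rough part $E^j$ satisfies the cancellation estimate $\|E^j * b\|_{L^1} \lesssim 2^{-s\delta_2/2}\|b\|_{L^1}$ for any mean-zero $b$ supported in a cube of side $2^{j-s}$. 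Then split
\[
\sum_{j\geq k} H^j_m * B^l_{j-s,2} = \sum_{r=0}^{N-1}\sum_{\substack{j \geq k\\ j\equiv r\,(\mathrm{mod}\,N)}} H^j_m * B^l_{j-s,2},
\]
and it suffices to dominate $\sup_k$ of each outer summand; within each residue class the scale gap of $2^N \gg 2^s$ makes the frequency cancellation in the next step clean.

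Within each class, perform a frequency-smoothing: let $\phi$ be a radial Schwartz function with $\widehat\phi$ compactly supported and $\widehat{\phi}\equiv 1$ near the origin, and set $\phi_k(x)=2^{-kd}\phi(2^{-k}x)$. By the $N$-separation of scales within the class and the Fourier localization of $\Gamma^j$ in $|\xi| \lesssim 2^{s-j}$, the convolution $\phi_k * \Gamma^j$ (restricted to $j$ in the class) acts essentially as a $0$-$1$ selector according to whether $j \geq k$, so that
\[
\sum_{\substack{j\geq k\\j\equiv r\,(\mathrm{mod}\,N)}} \!\!\!\Gamma^j * B^l_{j-s,2} = \phi_k * \!\!\!\sum_{j\equiv r\,(\mathrm{mod}\,N)}\!\!\! \Gamma^j * B^l_{j-s,2} + \mathcal{E}^{r,k,\Gamma},
\]
where $\mathcal{E}^{r,k,\Gamma}$ collects at most one transition-scale term per class. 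The main term on the right is pointwise dominated by $M\big(\sum_{j\equiv r\,(\mathrm{mod}\,N)}\Gamma^j * B^l_{j-s,2}\big)$, and substituting back $\Gamma^j = H^j_m - E^j$ inside the $M$ yields the first sum on the right-hand side of the lemma, plus additional $E^j$-contributions to be absorbed into the error.

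The remaining error, built from $\mathcal{E}^{r,k,\Gamma}$, the pieces $\sum_{j\geq k,\,j\equiv r\,(\mathrm{mod}\,N)} E^j * B^l_{j-s,2}$, and the $E^j$-smoothing residuals, is pointwise bounded by $\sum_j \sum_{P\in\mathcal{P}_{j-s}}|E^j * b^l_{P,2}|$. For each individual term, combining the $L^1$-cancellation $\|E^j * b^l_{P,2}\|_{L^1} \lesssim 2^{-s\delta_2/2}\|b^l_{P,2}\|_{L^1}$ with the support localization of $E^j * b^l_{P,2}$ in a ball of radius $\sim 2^j$ around $P$ yields a pointwise majorization $|E^j * b^l_{P,2}(x)| \leq 2^{-\delta_3 s}\, v_j * |b^l_{P,2}|(x)$ with $\delta_3 := \delta_2/2$ and $v_j$ a non-negative $L^1$-normalized bump at scale $2^j$. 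Summing across the $N = s 2^{l+2}$ residue classes multiplies the total error by $N$, giving the prefactor $s 2^l 2^{-\delta_3 s}$ in front of $\sum_j\sum_{P\in\mathcal{P}_{j-s}}v_j * |b^l_{P,2}|$. The main obstacle is precisely this pointwise conversion: a naive size estimate $|E^j|\leq v_j$ loses the $2^{-\delta_3 s}$ decay entirely, so the integrated $L^1$-cancellation must be combined carefully with kernel regularity and with the smoothing kernel $\phi_k$ from the previous step in order to preserve the decay factor in a pointwise comparison.
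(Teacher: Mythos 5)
The paper does not prove Lemma~\ref{lemm2.9} at all: it simply states ``Lemma~\ref{lemm2.9} is Lemma~2.5 in \cite{hon}'' and moves on. So any comparison has to be with Honz\'{\i}k's argument, and with the way the lemma is actually \emph{used} in the paper.

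Your structure is already at odds with that use. In the paper, Lemma~\ref{lemm2.8} (the Seeger split $H=\Gamma+E$) is applied \emph{after} Lemma~\ref{lemm2.9}, to the term ${\rm I}=\sum_r M\big(\sum_{j\equiv r}H^j_{n+N_0}*B^l_{j-s,2}\big)$ that Lemma~\ref{lemm2.9} produces. You instead import the decomposition $H^j_m=\Gamma^j+E^j$ \emph{inside} the proof of Lemma~\ref{lemm2.9}, which forces you to re-assemble $H^j_m$ at the end and absorb $M\big(\sum_j E^j*B^l_{j-s,2}\big)$ into the error. That step is where the argument breaks.

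The genuine gap is the pointwise conversion you flag yourself as ``the main obstacle'': you claim $|E^j*b^l_{P,2}(x)|\le 2^{-\delta_3 s}\,v_j*|b^l_{P,2}|(x)$ with $v_j$ an $L^1$-normalized bump at scale $2^j$. This does not follow from Lemma~\ref{lemm2.8}(2), which is an \emph{integrated} estimate $\|E^j*b_Q\|_{L^1}\lesssim 2^{-\delta_2 s}\|b_Q\|_{L^1}$. Combining that with the support localization of $E^j*b^l_{P,2}$ (a set of measure $\sim 2^{jd}$) only gives a bound on the \emph{average} of $|E^j*b^l_{P,2}|$, not an $L^\infty$ bound; the function could concentrate on a small set and violate your pointwise claim while preserving the $L^1$ bound. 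Indeed your claimed inequality, when $x$ lies in the support of $E^j*b^l_{P,2}$, amounts to $\|E^j*b^l_{P,2}\|_{L^\infty}\lesssim 2^{-\delta_3 s}2^{-jd}\|b^l_{P,2}\|_{L^1}$, a significantly stronger statement than anything provided by Lemma~\ref{lemm2.8}, and heuristically implausible since $E^j$ is precisely the \emph{rough} remainder of the Seeger split. You write that one should ``combine the integrated $L^1$-cancellation with kernel regularity and with the smoothing kernel $\phi_k$,'' but no such combination is carried out, and it is not clear how it could be, because the available radial-derivative control in (\ref{eqn2.15}) does not give the angular Lipschitz regularity of $E^j$ that would be needed.

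In addition, the Fourier-smoothing step is stated loosely: with $\widehat{\Gamma^j}$ essentially supported in $|\xi|\lesssim 2^{s-j}$, a low-pass cutoff $\widehat{\phi_k}$ supported in $|\xi|\lesssim 2^{-k}$ selects $j\gtrsim k+O(s)$, not $j\ge k$, and the ``at most one transition term per class'' claim needs to be quantified against the spacing $N=s2^{l+2}$ and then bounded pointwise by the error; none of this is made precise. Since the entire $2^{-\delta_3 s}$ gain in the second term of the lemma hinges on the unproven pointwise estimate, the proposal as written does not establish Lemma~\ref{lemm2.9}.
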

Lemma \ref{lemm2.9} is Lemma 2.5 in \cite{hon}.

Obviously,  ${\rm supp}\,K^j_m\subset\{2^{j-2}\leq |x|\leq 2^{j+2}\} $ for any $m\in\mathbb{N}$. Recall that $E=\cup_{P\in\mathcal{P}}36dP$, we have
\begin{eqnarray}\label{eqn2.13}
\sum_{j\geq k}\sum_{l=1}^\infty \sum_{s<3} (K^j_{l+N_0}-K^j_{N_0})* B^l_{j-s,2}(x) = 0,  \end{eqnarray}
for any $x\in \mathbb{R}^d\backslash E^*$ and $k\in\mathbb{Z}$. Moreover, for all $x\in\mathbb{R}^d\backslash E^*$, $Q\in \mathcal{F}$ and $x\in Q$, we know $|Q \cap E| \leq \frac{\lambda |Q|}{32}$. Thus, for $x\in \mathbb{R}^d\backslash E^*$, we obtain from (\ref{eqn2.13}) that
$$\Big(\sum_{j\geq k}\sum_{l=1}^\infty \sum_{s<3} (K^j_{l+N_0}-K^j_{N_0})*(B^l_{j-s,2}\chi_{\mathbb{R}^d\backslash 3Q})\Big)^*(\frac{\lambda|Q|}{16}) = 0,
$$
and
\begin{eqnarray*}
{E}_4(x)&\leq& \max_{x\in Q\atop Q\in\mathcal{F}}\Big(\sup_{k}\Big|\sum_{j\geq k}\sum_{l=1}^\infty \sum_{s=3}^\infty (K_{l+N_0}^{j}-K_{N_0}^{j})* (B^l_{j-s,2}\chi_{\mathbb{R}^d\setminus 3Q})\chi_Q\Big|\Big)^*(\frac{\lambda |Q|}{16})\\
&\leq& \sum_{j\in\mathbb{Z}}\sum_{l=1}^\infty \sum_{s=3}^{c_2l+N_{\lambda}} \mathcal{M}_{(T_{2^{l+N_0},j}-T_{2^{N_0},j}) }(B^l_{j-s,2})(x)\\
& + &\max_{x\in Q\atop Q\in\mathcal{F}}\Big(\sup_{k}\Big|\sum_{j\geq k}\sum_{l=1}^\infty \sum_{s=c_2l+N_{\lambda}+1}^\infty (K_{l+N_0}^{j}-K_{N_0}^{j})* (B^l_{j-s,2}\chi_{\mathbb{R}^d\setminus 3Q})\chi_Q\Big|\Big)^*(\frac{\lambda |Q|}{32})\\
&=:&{ E}_{41}(x)+{ E}_{42}(x),
\end{eqnarray*}
where $c_2=100\lfloor\delta^{-1}\rfloor$, $N_{\lambda}=20\lfloor\delta^{-1}(1+\log (\frac{1}{\lambda})\rfloor $, with $\delta=\min\{\delta_1,\,\delta_2,\,\delta_2\}$, and $\delta_1$, $\delta_2$, $\delta_3$   the constants appeared in Lemma \ref{lemm2.8} and Lemma \ref{lemm2.9}.

We first consider ${\rm E}_{41}(x)$. Repeating the proof of Lemma 2.6 in \cite{ler5}, we can verify that for any $j\in\mathbb{Z}$ and $m\geq 0$, the operator $\mathcal{M}_{T_{m,j}}$ is bounded on $L^1(\mathbb{R}^d)$  with bound depending on $d$ and $C\|\Omega\|_{L^{\infty}({S}^{d-1})}$, but not on $j$ and $m$. A straightforward computation leads to that
\begin{eqnarray}\label{eqn2.14}
\|{ E}_{41}\|_{L^1(\mathbb{R}^d)}&\leq &\sum_{j\in\mathbb{Z}}\sum_{l=1}^\infty \sum_{s=3}^{c_2l+N_{\lambda}} \|(\mathcal{M}_{T_{2^{l+N_0},j}}+\mathcal{M}_{T_{2^{N_0},j}})(B^l_{j-s,2})\|_{L^1(\mathbb{R}^d)} \\
&\lesssim&\sum_{l=1}^\infty\sum_{s=3}^{c_2l+N_{\lambda}}\sum_{P\in\mathcal{P}} \|b^l_{P,2}\|_{L^1(\mathbb{R}^d)} \nonumber\\
&\lesssim& \big(1+\log  (\frac{1}{\lambda})\big)\int_{\mathbb{R}^d}|f(x)|dx+\int_{\mathbb{R}^d}\Phi(|f(x)|)dx,\nonumber
\end{eqnarray}
where in the lat inequality, we have invoked the inequality (\ref{eq2.b2l}).

Now we turn our attention to term ${\rm E}_{42}$. As in [13, p. 1594], we write the set $\{x:\, E_{42}(x)>1/16\}$ as   the union of maximal pairwise disjoint cubes $Q_i$, which can be selected into two disjoint families $\mathcal{A}_1$ and $\mathcal{A}_2$: for each $Q_i\in\mathcal{A}_1$,
$$
|Q_i|<\frac{64}{\lambda}\Big|\Big\{x\in Q_i: \, \sup_{k}\Big|\sum_{j\geq k}\sum_{l=1}^\infty \sum_{s=c_2l+N_{\lambda}+1}^\infty \sum_{n=1}^l H^j_{n+N_0}* B^l_{j-s,2}\Big|>\frac 1{16}\Big\}\Big|;
$$
and each $Q_i\in\mathcal{A}_2$,
$$
|Q_i|<\frac{64}{\lambda}\Big|\Big\{x\in Q_i: \, \sup_{k}\Big|\sum_{j\geq k}\sum_{l=1}^\infty \sum_{s=c_2l+N_{\lambda}+1}^\infty \sum_{n=1}^l H^j_{n+N_0}* (B^l_{j-s,2}\chi_{3Q_i})\Big|>\frac 1{16}\Big\}\Big|.
$$

By Lemma \ref{lemm2.9}, one has that
\begin{eqnarray*}
&&\sup_{k}\Big|\sum_{j\geq k}\sum_{l=1}^\infty \sum_{s=c_2l+N_{\lambda}+1}^\infty \sum_{n=1}^l H^j_{n+N_0}* B^l_{j-s,2}\Big|\\
&&\quad\quad \lesssim
\sum_{l=1}^\infty \sum_{s=c_2l+N_{\lambda}+1}^\infty \sum_{n=1}^l \sum_{r=0}^{s2^{l+2}-1} M\Big(\sum_{j=r\hbox{mod}(s2^{l+2})}H_{n+N_0}^{j} *B^l_{j-s,2}\Big)\nonumber\\
&&\quad\quad\quad + \sum_{l=1}^\infty \sum_{s=c_2l+N_{\lambda}+1}^\infty \sum_{n=1}^l s2^l 2^{-\delta s}\sum_{j\in\mathbb{Z}}\sum_{P\in\mathcal{P}_{j-s}} v_j*|b^l_{P,2}|=: {\rm I}(x)+{\rm II}(x).\nonumber
\end{eqnarray*}
By Chebyshev's inequality, we have,
\begin{eqnarray*}
\|{\rm II}\|_{L^{1, \infty}(\mathbb{R}^d)}&\lesssim&
 \sum_{l=1}^\infty \sum_{s=c_2l+N_{\lambda}+1}^\infty \sum_{n=1}^l s2^l 2^{-\delta s} \sum_{j\in\mathbb{Z}}\sum_{P\in\mathcal{P}_{j-s}} \|v_j\|_{L^{1}(\mathbb{R}^d)}\|b^l_{P,2}\|_{L^{1}(\mathbb{R}^d)}\\
&\leq& \sum_{l=1}^\infty l2^l\sum_{s=c_2l+N_{\lambda}+1}^\infty  s 2^{-\delta s} \sum_{P\in\mathcal{P}} \|b^l_{P}\|_{L^{1}(\mathbb{R}^d)}\\
&\lesssim&  2^{-N_{\lambda}\delta/4}\|f\|_{L^{1}(\mathbb{R}^d)}.
\end{eqnarray*}

To deal with term ${\rm I}$,  we use Lemma \ref{lemm2.8}  and decompose $H_{n+N_0,k}^{j}$ as
$$H_{n+N_0}^{j}=(H_{n+N_0}^{j}-\Gamma^j_{n+N_0,s})+\Gamma^j_{n+N_0, s},$$
where $\Gamma^j_{n+N_0, s}$ enjoy the properties of $\Gamma^j_s$ in Lemma \ref{lemm2.8}. Write
\begin{eqnarray*}
{\rm I}(x)&\lesssim&
\sum_{l=1}^\infty \sum_{s=c_2l+N_{\lambda}+1}^\infty \sum_{n=1}^l\sum_{r=0}^{s2^{l+2}-1} M\Big(\sum_{j=r\hbox{mod}(s2^{l+2})}(\Gamma^j_{n+N_0,s}-H_{n+N_0}^{j}) *B^l_{j-s,2}\Big)\nonumber\\
&& +\sum_{l=1}^\infty \sum_{s=c_2l+N_{\lambda}+1}^\infty \sum_{n=1}^l\sum_{r=0}^{s2^{l+2}-1} M\Big(\sum_{j=r\hbox{mod}(s2^{l+2})}\Gamma_{n+N_0,s}^{j} *B^l_{j-s,2}\Big)\\
&=:& {\rm I}_1(x) + {\rm I}_2(x).
\end{eqnarray*}
Recalling  that  $M$ is bounded from $L^1(\mathbb{R}^d)$ to $L^{1, \infty}(\mathbb{R}^d)$, we obtain from (2) of Lemma \ref{lemm2.8} that
\begin{eqnarray*}
\|{\rm I}_1\|_{L^{1, \infty}(\mathbb{R}^d)}&\lesssim&
 \sum_{l=1}^\infty l^2\sum_{n=1}^l\sum_{s=c_2l+N_{\lambda}+1}^\infty s^2\sum_{r=0}^{s2^{l+2}-1}r^2\\
 &&\times \Big\|\sum_{j=r\hbox{mod}(s2^{l+2})}(\Gamma^j_{n+N_0,s}-H_{n+N_0}^{j}) *B^l_{j-s,2}\Big\|_{L^{1}(\mathbb{R}^d)}\nonumber\\
&\lesssim& \sum_{l=1}^\infty l^3\sum_{s=c_2l+N_{\lambda}+1}^\infty (s2^{l+2})^3s^22^{-\delta s}\sum_{P\in\mathcal{P}} \|b_P^l\|_{L^{1}(\mathbb{R}^d)} \\
&\lesssim& 2^{-N_{\lambda}\delta/4}\|f\|_{L^{1}(\mathbb{R}^d)}
\end{eqnarray*}
On the other hand, it follows from Lemma \ref{lemm2.8}, the $L^2(\mathbb{R}^d)$ boundedness of $M$ and  H\"older's inequality that
\begin{eqnarray*}
\|{\rm I}_2\|_{L^{2, \infty}(\mathbb{R}^d)}&\lesssim& \sum_{l=1}^\infty l^3\sum_{n=1}^ln^2 \sum_{s=c_2l+N_{\lambda}+1}^\infty s^3\sum_{r=0}^{s2^{l+2}-1} r^3\Big\|\sum_{j=r\hbox{mod}(s2^{l+2})}\Gamma^j_{n+N_0,s} *B^l_{j-s,2}\Big\|_{L^{2}(\mathbb{R}^d)}\nonumber\\
&\lesssim& \sum_{l=1}^\infty l^6 \sum_{s=c_2l+N_{\lambda}+1}^\infty (s2^{l})^{4}2^{-\delta s/2} \Big(\sum_{P\in\mathcal{P}} \|b_P^l\|_{L^{1}(\mathbb{R}^d)}\Big)^{1/2}\\
&\lesssim&  2^{-N_{\lambda}\delta/8}\|f\|^{1/2}_{L^{1}(\mathbb{R}^d)}
\end{eqnarray*}
Combining the estimate for terms ${\rm I}$ and ${\rm II}$, we obtain that
\begin{equation}\label{eqn2.17}
\sum_{Q_i\in\mathcal{A}_1}|Q_i|\lesssim \frac 1\lambda 2^{-N_{\lambda}\delta/8}\|f\|_{L^{1}(\mathbb{R}^d)}\lesssim \|f\|_{L^1(\mathbb{R}^d)}.
\end{equation}

As it was pointed out in \cite[p. 1596]{ler5}, for each cube $Q_i\in\mathcal{A}_2$, we have that
\begin{eqnarray*}
|Q_i|&\leq&\frac{64}{\lambda}\Big|\Big\{x\in Q_i: \, \sup_{k}\big|\sum_{j\geq k}\sum_{l=1}^\infty \sum_{s=c_2l+N_{\lambda}+1}^\infty \sum_{n=1}^l H^j_{n+N_0}* (B^l_{j-s,2}\chi_{3Q_i})\big|>\frac 1{16}\Big\}\Big|\\
&\le & \frac{64}{\lambda}\Big|\Big\{x\in \mathbb{R}^d: \, \sup_{k}\big|\sum_{j\geq k}\sum_{l=1}^\infty \sum_{s=c_2l+N_{\lambda}+1}^\infty \sum_{n=1}^l H^j_{n+N_0}* B^{l,i}_{j-s,2}\big|>\frac 1{16}\Big\}\Big|
\end{eqnarray*}
where and in the following,
$$B^{l,i}_{j-s,2}(x)=\sum_{P\in \mathcal{P}_{j-s},\,P\subset 3Q_i}b_{P,2}^l(x).
$$
As in the proof of (\ref{eqn2.17}), we have that for each $Q_i\in \mathcal{A}_2$,
$$|Q_i|\lesssim \frac 1\lambda  2^{-N_{\lambda}\delta/8}\int_{3Q_i}|f(x)|dx\lesssim \int_{3Q_i}|f(x)|dx,
$$
which implies that
\begin{eqnarray}\label{eq2.21}
&&\big|\cup_{Q_i\in\mathcal{A}_2}Q_i\big|\le \big|\{x\in\mathbb{R}^d:\,Mf(x)>C_d\}\big|\lesssim  \|f\|_{L^{1}(\mathbb{R}^d)}.
\end{eqnarray}
Combining the estimates (\ref{eqn2.14})-(\ref{eq2.21}) leads to that
\begin{eqnarray*}
|\{x\in\mathbb{R}^d:\, E_4(x)>1/8\}|&\lesssim&|\{x\in\mathbb{R}^d:\, E_{41}(x)>1/16\}|\\
&&+|\{x\in\mathbb{R}^d:\, E_{42}(x)>1/16\}|\\
 &\lesssim&\big(1+\log  (\frac{1}{\lambda})\big)\int_{\mathbb{R}^d}|f(x)|dx+\int_{\mathbb{R}^d}\Phi(|f(x)|)dx.
\end{eqnarray*}
This completes the proof of (\ref{eqn2.final}).

\section{proof of Theorem \ref{dingli1.3}}
We begin with a preliminary lemma.
\begin{lemma}\label{lem3.1}
Let $0\leq\gamma_1,\,\gamma_2\leq 1$ and $T$  a sublinear operator. Suppose that for all $\lambda\in (0,\,1)$,
\begin{eqnarray*}|\{x\in\mathbb{R}^d:\, M_{\lambda,\,T}f(x)>\alpha\}|&\lesssim& \big(1+\log (\frac{1}{\lambda})\big)^{\gamma_1}\int_{\mathbb{R}^d}\frac{|f(x)|}{\alpha}dx\\
&&+\big(1+\log (\frac{1}{\lambda})\big)^{\gamma_2}\int_{\mathbb{R}^d}\Phi\big(\frac{|f(x)|}{\alpha}\big)dx,
\end{eqnarray*}
then for all $p\geq 1$ and $\alpha>0$,
\begin{eqnarray}\label{eqn3.1}|\{x\in\mathbb{R}^d:\, \mathscr{M}_{p,T}f(x)>\alpha\}|\lesssim p^{\gamma_1} \int_{\mathbb{R}^d}\frac{|f(x)|}{\alpha}dx+p^{\gamma_2}\int_{\mathbb{R}^d}\Phi\big(\frac{|f(x)|}{\alpha}\big)dx.\end{eqnarray}
\end{lemma}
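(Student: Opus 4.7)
The strategy is to cover the level set $\{\mathscr{M}_{p,T}f>\alpha\}$ by a geometrically indexed family of level sets of the operators $M_{\lambda_k,T}$, and then apply the hypothesis term by term. By homogeneity I assume $\alpha=1$. The key geometric fact, which ties the two operators together, is: if $Q\ni x$ and $|\{y\in Q:g_Q(y)>t\}|>\lambda|Q|$, where $g_Q:=T(f\chi_{\mathbb{R}^d\setminus 3Q})\chi_Q$, then $g_Q^{*}(\lambda|Q|)>t$, and since $g_Q^{*}(\lambda|Q|)\le M_{\lambda,T}f(y)$ for every $y\in Q$ by the very definition of $M_{\lambda,T}$, the entire cube $Q$ lies in $\{M_{\lambda,T}f>t\}$.

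Fix $x$ with $\mathscr{M}_{p,T}f(x)>1$ and pick a cube $Q\ni x$ with $\frac{1}{|Q|}\int_Q g_Q^p>1$. Setting $t_0=2^{-1/p}$, the layer cake together with the trivial bound $|\{g_Q>t\}|/|Q|\le 1$ give
\[
p\int_{t_0}^\infty t^{p-1}\,\frac{|\{g_Q>t\}|}{|Q|}\,dt>\frac{1}{2}.
\]
A dyadic decomposition into $[2^{k}t_0,2^{k+1}t_0)$ then yields $\sum_{k\ge 0}2^{kp}|\{g_Q>2^{k}t_0\}|/|Q|\gtrsim(2^p-1)^{-1}$. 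Applying the elementary pigeonhole principle (if $\sum a_k>S$ and $\sum w_k=1$ then some $a_k>Sw_k$) with $w_k=2^{-k-1}$ produces some $k=k(x)$ with $|\{g_Q>2^{k}t_0\}|/|Q|>\lambda_k$, where $\lambda_k=(2^{k(p+1)+1}(2^p-1))^{-1}$. The key geometric fact then delivers $x\in Q\subset\{M_{\lambda_k,T}f>2^{k}t_0\}$, so
\[
\{\mathscr{M}_{p,T}f>1\}\subset\bigcup_{k\ge 0}\{M_{\lambda_k,T}f>2^{k}t_0\}.
\]

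Finally I apply the hypothesis to each piece in the cover. Since $\log(1/\lambda_k)\lesssim(k+1)p$, the logarithmic factor contributes $(p(k+1))^{\gamma_i}$. The threshold satisfies $2^{k}t_0\gtrsim 2^{k-1}$, giving $(2^{k}t_0)^{-1}\lesssim 2^{-k}$ for the $L^1$ piece, and $\Phi(s/c)\le \Phi(s)/c$ for $c\ge 1$ handles the Orlicz piece when $k\ge 1$; the exceptional $k=0$ term uses the doubling $\Phi(2s)\lesssim \Phi(s)+s$, and the resulting $L^1$ leftover is absorbed into $\int\Phi(|f|)$ using $\Phi(t)\ge(\log 2)t$. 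The sums $\sum_k(k+1)^{\gamma_i}2^{-k}$ converge, producing precisely $p^{\gamma_1}\|f\|_1+p^{\gamma_2}\int\Phi(|f|)$. I expect the main obstacle to be balancing the pigeonhole weights: the geometric choice $w_k=2^{-k-1}$ is exactly strong enough that $\log(1/\lambda_k)=O(kp)$, which is precisely the amount the hypothesis can absorb against the $2^{-k}$ gain from the threshold; a heavier weight choice would not sum, while a lighter one would over-inflate the $p$-dependence.
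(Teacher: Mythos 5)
Your proof is correct and takes a genuinely different route from the paper's. The paper follows Lerner's self-improvement scheme: it starts from the pointwise inequality
$\mathscr{M}_{p,T}f(x)\le\bigl(\int_0^1(M_{\lambda,T}f(x))^p\,d\lambda\bigr)^{1/p}$,
truncates to $G_{p,T,N}f=\bigl(\int_0^1\min(M_{\lambda,T}f,N)^p\,d\lambda\bigr)^{1/p}$,
proves the bootstrap inequality $G_{p,T,N}f\le 2^{-k+1}G_{kp,T,N}f+M_{2^{-kp},T}f$, iterates $j$ times (taking $k=5$), and uses that the truncation forces the leading distribution-function term to vanish once $2^{(k-2)j}>N$. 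Your argument eliminates the iteration entirely: a layer-cake computation on $\frac{1}{|Q|}\int_Q g_Q^p>1$, combined with the pigeonhole weights $w_k=2^{-k-1}$, produces in one step the covering $\{\mathscr{M}_{p,T}f>1\}\subset\bigcup_{k\ge0}\{M_{\lambda_k,T}f\ge 2^k t_0\}$ with $\log(1/\lambda_k)\lesssim(k+1)p$, and then a single summation over $k$ against the geometric gain $2^{-k}$ from the threshold gives the conclusion. Both proofs exploit the same trade-off between the unfavorable measure fraction $\lambda$ and the height of the level set, but yours makes the trade-off explicit in a covering, while the paper encodes it in the geometric decay of an induction. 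Your route is arguably more transparent and avoids invoking the $G_{p,T,N}$ identity as a black box.

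Two small remarks. First, $|\{g_Q>t\}|>\lambda|Q|$ yields $g_Q^*(\lambda|Q|)\ge t$ rather than the strict inequality, so the covering should be stated with $\{M_{\lambda_k,T}f\ge 2^k t_0\}$ and the hypothesis then applied at the slightly smaller level $\alpha=2^{k-1}t_0$; this is what your computation implicitly does, but it is worth making explicit. Second, the closing reflection about a lighter pigeonhole weight over-inflating the $p$-dependence is not quite accurate: for any weight $w_k\sim 2^{-ck}$ with $c$ fixed one still has $\log(1/\lambda_k)\lesssim kp$ for $p\ge1$, so the estimate would go through unchanged. The genuine constraint is only that $w_k$ be summable and that $\log(1/w_k)$ grow at most polynomially in $k$; your choice is convenient but not uniquely forced.
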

\begin{proof}By homogeneity, it suffices to prove (\ref{eqn3.1}) for the case of $\alpha=1$. We will
use the ideas in the proof of Lemma 3.3 in \cite{ler5}. At first, we have that
\begin{eqnarray}\label{eqn3.2}\mathscr{M}_{p,\,T}f(x)\leq \Big(\int^1_0\big(M_{\lambda,\,T}f(x)\big)^pd\lambda\Big)^{1/p}.
\end{eqnarray}
 For $N>0$, denote
$$G_{p,T,N}f(x) =\Big(\int_0^1
\min(M_{\lambda,T}f(x),N)^pd\lambda
\Big)^{1/p}.$$
Set $$\mu_f(t,\,R) = |\{|x|\leq R:\,|f(x)| > t\}| \,\,\hbox{for}\,\, R>0.$$
As in \cite{ler5}, we have that for $k\in\mathbb{N}$,
$$G_{p,T,N}f(x)\leq 2^{-k+1}G_{kp,T,N}f(x)+M_{2^{-kp},T}f(x),
$$
and so
$$\mu_{G_{p,T,N}f} (t,\,R)
\leq
\mu_{G_{kp,T,N}f} (2^{k-2}t,R) +C
(kp)^{\gamma_1}\int_{\mathbb{R}^d}\frac{|f(y)|}{t}dy+C
(kp)^{\gamma_2}\int_{\mathbb{R}^d}\Phi\big(\frac{|f(y)|}{t}\big)dy.
$$
Iterating this estimate $j$-th step yields
\begin{eqnarray*}
\mu_{G_{p,T,N}f} (1,R)
&\leq &
\mu_{G_{k^jp,T,N}f} (2^{(k-2)j},R) +C2^{k-2}\sum_{i=1}^{j}\Big(\frac{
k^{\gamma_1}}{2^{k-2}}\Big)^{i}p^{\gamma_1}\int_{\mathbb{R}^d}|f(y)|dy\\
&&+C2^{k-2}\sum_{i=1}^{j}\Big(\frac{
k^{\gamma_2}}{2^{k-2}}\Big)^{i}p^{\gamma_2}\int_{\mathbb{R}^d}\Phi(|f(y)|)dy.
\end{eqnarray*}
Taking $k=5$ and observing that $\mu_{G_{5^jp,T,N}f} (8^j,R) = 0$, we finally obtain that
$$\mu_{G_{p,T,N}f} (1,\,R)
\lesssim
p^{\gamma_1}\int_{\mathbb{R}^d}|f(y)|dy+p^{\gamma_2}\int_{\mathbb{R}^d}\Phi(|f(y)|)dy.
$$
Letting $N,\,R\rightarrow \infty$ and applying (\ref{eqn3.2}) completes the proof of Lemma \ref{lem3.1}.
\end{proof}

{\it Proof of Theorem \ref{dingli1.3}}. When $\Omega\in L^{\infty}(S^{d-1})$ with $\|\Omega\|_{L^{\infty}(S^{d-1})}=1$, we  have
$$
T^*_\Omega f(x)\leq M  f(x) + T_{\Omega}^{**}f(x),
$$Theorem \ref{dingli2.1}, along with Lemma \ref{lem3.1}, tells us that for $p\in (1,\,\infty)$,
$$ |\{x\in\mathbb{R}^d:\, \mathscr{M}_{p,T_{\Omega}^{**}}f(x)>\alpha\}|\lesssim p \int_{\mathbb{R}^d}\frac{|f(x)|}{\alpha}dx+ \int_{\mathbb{R}^d}\Phi\big(\frac{|f(x)|}{\alpha}\big)dx.$$
On the other hand, it is easy to verify that
$$\mathcal{M}_{M}f(x)\lesssim Mf(x),\,\,\,x\in\mathbb{R}^d.$$
Therefore,  for $p\in (1,\,\infty)$ and $\alpha>0$,
\begin{eqnarray}\label{eq3.3}
|\{x\in\mathbb{R}^d:\, \mathscr{M}_{p,T_{\Omega}^{*}}f(x)>\alpha\}|\lesssim p \int_{\mathbb{R}^d}\frac{|f(x)|}{\alpha}dx+ \int_{\mathbb{R}^d}\Phi\big(\frac{|f(x)|}{\alpha}\big)dx.
\end{eqnarray}
On the other hand, we know that
\begin{eqnarray}\label{equation3.4}
|\{x\in\mathbb{R}^d:\,T_{\Omega}^{*}f(x)>\alpha\}|\lesssim \int_{\mathbb{R}^d}\Phi\big(\frac{|f(x)|}{\alpha}\big)dx,
\end{eqnarray}
see \cite[Theorem 1.1]{bhmo}. These two endpoint estimates,  via the argument used in the proof of Theorem 4.4 in \cite{hulai} (see also the proof of Theorem 3.1 in \cite{ler5}), leads to the conclusion of Theorem \ref{dingli1.3}.
\qed

At the end of this section, we give some applications of Theorem \ref{dingli1.3}.
At first, by mimicking the proof of Theorem 1.1 in \cite{lpr}, we can verify that Theorem \ref{dingli1.3} implies the following weighted inequality of Coifman-Fefferman type.
\begin{theorem}\label{dingli1.5} Let $\Omega$ be homogeneous of degree zero,   have mean value zero on $S^{d-1}$ and $\Omega\in L^{\infty}(S^{d-1})$.
Then for $p\in [1,\,\infty)$ and  $w\in A_{\infty}(\mathbb{R}^d)$,
\begin{eqnarray*}\|T^*_{\Omega}f\|_{L^p(\mathbb{R}^d,\,w)}&\lesssim&\|\Omega\|_{L^{\infty}(S^{d-1})}[w]_{A_{\infty}}^2
\|Mf\|_{L^p(\mathbb{R}^d,\,w)}\\
&&+\|\Omega\|_{L^{\infty}(S^{d-1})}[w]_{A_{\infty}}\log({\rm e}+[w]_{A_{\infty}})
\|M_{\Phi}f\|_{L^p(\mathbb{R}^d,\,w)},
\end{eqnarray*}
provided that $f$ is a bounded function with compact support, where and in the following, $M_{\Phi}$ is the maximal operator defined by
$$M_{\Phi}f(x)=\sup_{Q\ni x}\langle|f |\rangle_{\Phi,\,Q}.$$
\end{theorem}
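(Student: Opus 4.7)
The plan is to derive Theorem \ref{dingli1.5} directly from the bilinear sparse domination of Theorem \ref{dingli1.3} by following the Coifman-Fefferman strategy of Li-P\'erez-Rivera-Rios-Roncal \cite[proof of Theorem 1.1]{lpr} and introducing an Orlicz sparse estimate to absorb the $\mathcal{A}_{\mathcal{S},L^\Phi,L^r}$ contribution.

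By $L^p(w)$-duality (with a standard normalization when $p=1$), one has
$$\|T^*_\Omega f\|_{L^p(w)} \approx \sup\Big\{\int T^*_\Omega f\cdot gw\,dx:\ g\geq 0,\ \|g\|_{L^{p'}(w)}\leq 1\Big\}.$$
I would choose $r=1+1/(c_d[w]_{A_\infty})$, so that $r'\lesssim[w]_{A_\infty}$ and, by the Hyt\"onen-P\'erez sharp reverse H\"older inequality, $\langle w\rangle_{r,Q}\leq 2\langle w\rangle_Q$ for every cube $Q$. Applying Theorem \ref{dingli1.3} with $G:=gw$ then produces a sparse family $\mathcal{S}$ with
$$\int T^*_\Omega f\cdot gw\,dx \lesssim \|\Omega\|_{L^\infty(S^{d-1})}\Big(r'\,\mathcal{A}_{\mathcal{S},L^1,L^r}(f,gw)+\mathcal{A}_{\mathcal{S},L^\Phi,L^r}(f,gw)\Big).$$

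The two sparse operators are estimated in parallel. Inside each cube $Q$, a suitable H\"older inequality combined with the reverse H\"older bound replaces $\langle w^r\rangle_Q^{1/r}$ by $\langle w\rangle_Q$ and factorizes $\langle gw\rangle_{r,Q}$ as a product of $\langle w\rangle_Q$ and a weighted average of $g$. The sparsity $|Q|\leq 2|E_Q|$, the disjointness of $\{E_Q\}$, and the pointwise inequalities $\langle f\rangle_Q\leq \inf_Q Mf$ and $\langle f\rangle_{\Phi,Q}\leq \inf_Q M_\Phi f$ then convert each sparse sum into an integral of the form $\int Mf\cdot M^\sigma g\,dw$ (resp.\ the $M_\Phi$-version), where $M^\sigma$ is an appropriate dyadic weighted maximal operator bounded on $L^{p'}(w)$. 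H\"older in $L^p(w)$ and the $A_\infty$-Carleson estimate $w(Q)\lesssim[w]_{A_\infty}w(E_Q)$ yield
$$\mathcal{A}_{\mathcal{S},L^1,L^r}(f,gw)\lesssim[w]_{A_\infty}\|Mf\|_{L^p(w)}\|g\|_{L^{p'}(w)},$$
which, multiplied by the prefactor $r'\lesssim[w]_{A_\infty}$, contributes the $[w]_{A_\infty}^2\|Mf\|_{L^p(w)}$ term in the statement.

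The main obstacle is the $\mathcal{A}_{\mathcal{S},L^\Phi,L^r}$ summand, where the Luxemburg average $\langle\,\cdot\,\rangle_{\Phi,Q}$ does not admit a clean H\"older factorization in the way $\langle\,\cdot\,\rangle_{1,Q}$ does. The remedy is to run the Orlicz analogue of the Carleson embedding argument using the associate Young function of $\Phi$; because $\Phi(t)=t\log\log({\rm e}^2+t)$ grows only marginally faster than linear, its associate produces an additional logarithmic loss in the reverse-H\"older/Carleson step, which is precisely the source of the factor $\log({\rm e}+[w]_{A_\infty})$ in the $M_\Phi f$ term. Executing this Orlicz estimate delivers
$$\mathcal{A}_{\mathcal{S},L^\Phi,L^r}(f,gw)\lesssim[w]_{A_\infty}\log({\rm e}+[w]_{A_\infty})\|M_\Phi f\|_{L^p(w)}\|g\|_{L^{p'}(w)},$$
and combining the two estimates, together with the supremum over admissible $g$, completes the proof. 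Tracking constants in the Orlicz step—and verifying that the decomposition (\ref{equa4.1}) does not introduce an uncontrolled $\|M_r f\|_{L^p(w)}$ contribution that would spoil the final $M_\Phi$ bound—is the most delicate part of the argument.
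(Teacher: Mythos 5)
Your high-level plan---duality against $gw$, choose $r$ so that $r'\lesssim[w]_{A_\infty}$ and the sharp reverse H\"older inequality applies, invoke Theorem \ref{dingli1.3}, and then estimate the two sparse forms separately---is exactly the ``mimic the proof of Theorem 1.1 in \cite{lpr}'' route that the paper points to (the paper itself gives no further details), so the overall strategy coincides with the paper's. One detail in the first half needs fixing: the per-cube inequality $w(Q)\lesssim[w]_{A_\infty}w(E_Q)$ is not an $A_\infty$ fact; for $|E_Q|\geq\eta|Q|$ the density estimate is only $w(E_Q)/w(Q)\gtrsim(|E_Q|/|Q|)^{c[w]_{A_\infty}}$, i.e.\ exponential in $[w]_{A_\infty}$. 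What actually drives the argument is the $A_\infty$-Carleson condition $\sum_{Q'\in\mathcal{S},\,Q'\subset R}w(Q')\lesssim[w]_{A_\infty}w(R)$, used via a Carleson embedding, not a cube-by-cube comparison.

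The genuine gap is in the Orlicz term. You assert
$\mathcal{A}_{\mathcal{S},L^\Phi,L^r}(f,gw)\lesssim[w]_{A_\infty}\log(\mathrm{e}+[w]_{A_\infty})\|M_\Phi f\|_{L^p(w)}\|g\|_{L^{p'}(w)}$
but never prove it, and the mechanism you propose for the extra $\log$ does not hold up: the Luxemburg average $\langle f\rangle_{\Phi,Q}$ sits only on the $f$-side and is controlled pointwise by $\inf_Q M_\Phi f$, exactly as $\langle f\rangle_Q\leq\inf_Q Mf$ in the first sparse term. The whole $g$/$w$-side analysis---the treatment of $\langle gw\rangle_{r,Q}$, the reverse H\"older inequality for $w$, the Carleson embedding---is identical for the two sparse forms, and the complementary Young function of $\Phi$ never enters it. An ``Orlicz analogue of the Carleson embedding'' structured as you describe would therefore reproduce the same $[w]_{A_\infty}$ constant as the $L^1$ term, not introduce a logarithm; the source of $\log(\mathrm{e}+[w]_{A_\infty})$ must lie elsewhere in the \cite{lpr} mimicry. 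You yourself flag this step as ``the most delicate part of the argument'' and leave it unresolved; as written, the proposal does not actually establish the stated bound for the $M_\Phi$ contribution.
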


From (\ref{equa4.1}), we know  that for $r\in (1,\,\infty)$, $$M_{\Phi}f(x)\lesssim M_{r}f(x)+\log (r')Mf(x).$$
This shows that Theorem \ref{dingli1.5} improves   Theorem \ref{dinglidhl} substantially.

To give another application of Theorem \ref{dingli1.3}, we formulate the following weighted weak type endpoint
estimates.
\begin{theorem}\label{thm4.1}
Let $\gamma_1,\,\gamma_2\geq 0$ be two constants, $U$ be a  sublinear operator, $\Psi$ be a Young function such that  $\Psi(t)\geq t$, and for all $p\in (1,\,\infty)$,
$$\Psi(t)\lesssim p'^{\gamma_1}t^{p-1} \,\,\hbox{when}\,\,t\geq 1.
$$
Suppose that  for bounded function  $f$ with compact support, and for each $r\in (1,\,\infty)$, there exists a sparse family $\mathcal{S}$, such that for all bounded function $g$ with compact support,
\begin{eqnarray*}\label{eq4.1}
\Big|\int_{\mathbb{R}^d}g(x)Uf(x)dx\Big|\lesssim r'^{\gamma_2}
\mathcal{A}_{\mathcal{S},\,L^{\Psi},\,L^r}(f,\,g).\end{eqnarray*}
Then for any bounded function $f$ with compact support, $\alpha>0$ and $w\in A_{1}(\mathbb{R}^d)$,
\begin{eqnarray*}\label{extra.4}
w(\{x\in\mathbb{R}^d:|Uf(x)|>\alpha\})
\lesssim [w]^{\gamma_2}_{A_{\infty}}\log^{1+\gamma_1}({\rm e}+[w]_{A_{\infty}})[w]_{A_1}  \int_{\mathbb{R}^d}\Psi\big(\frac{|f(x)|}{\alpha}\big )w(x)dx
.\nonumber
\end{eqnarray*}
\end{theorem}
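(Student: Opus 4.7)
The plan is to extend the duality-absorption strategy of \cite{lpr} (used there for the $L^1$--$L^r$ sparse form) to the more general $L^\Psi$--$L^r$ sparse form hypothesized here. By homogeneity I may take $\alpha=1$ and set $E=\{x:|Uf(x)|>1\}$. Since $|Uf|>1$ on $E$, one has $w(E)\leq \int_E |Uf|\,w\,dx$; a standard truncation/density argument lets one test $U$ against $g=\mathrm{sgn}(Uf)\chi_E w$ (bounded, compactly supported) and, applying the sparse hypothesis with an auxiliary parameter $r>1$ to be chosen, produces the starting inequality
\[
w(E)\lesssim r'^{\gamma_2}\,\mathcal{A}_{\mathcal{S},\,L^\Psi,\,L^r}(f,\chi_E w).
\]

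The heart of the proof is an auxiliary upper bound for this sparse form. Using sparsity ($|Q|\leq 2|E_Q|$ with the sets $E_Q$ pairwise disjoint), the sum is dominated by $2\int M_\Psi f\cdot M_r(\chi_E w)\,dx$. Choosing $r$ with $r'\gtrsim [w]_{A_\infty}$, the reverse H\"older inequality for $A_\infty$ weights combined with a H\"older split on $\chi_E$ yields $\langle \chi_E w\rangle_{r,Q}\lesssim (|Q\cap E|/|Q|)^{\varepsilon}\langle w\rangle_Q$ for some $\varepsilon=\varepsilon(r,[w]_{A_\infty})>0$. The $A_1$ inequality $\langle w\rangle_Q\lesssim [w]_{A_1}\inf_Q w$ then gives
\[
\mathcal{A}_{\mathcal{S},L^\Psi,L^r}(f,\chi_E w)\lesssim [w]_{A_1}\int M_\Psi f\cdot (M\chi_E)^\varepsilon\, w\,dx.
\]
Now apply H\"older with an auxiliary exponent $p\in(1,\infty)$ and invoke the weighted Muckenhoupt--Buckley bounds for $M$ on $L^p(w)$ (norm $\lesssim p'[w]_{A_1}^{1/p}$), together with the hypothesis $\Psi(t)\lesssim p'^{\gamma_1}t^{p-1}$ for $t\geq 1$, which implies $\langle f\rangle_{\Psi,Q}\lesssim p'^{\gamma_1/(p-1)}\langle f\rangle_{p-1,Q}$ up to harmless lower-order terms. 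Provided $\varepsilon p'>1$, the weighted $L^{\varepsilon p'}(w)$-bound for $M$ controls $\|(M\chi_E)^\varepsilon\|_{L^{p'}(w)}$ by $w(E)^{1/p'}$, and one reaches
\[
w(E)\lesssim r'^{\gamma_2}\,p'^{\,1+\gamma_1}\,[w]_{A_1}\,w(E)^{1/p'}\,\|\Psi(|f|)\|_{L^1(w)}^{1/p}.
\]

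Absorbing $w(E)^{1/p'}$ to the left, raising to the $p$-th power, and finally optimizing by choosing $r'\sim [w]_{A_\infty}$ (so that $r'^{\gamma_2}$ contributes the exact factor $[w]_{A_\infty}^{\gamma_2}$ while $[w]_{A_\infty}^{1/r'}\sim 1$) and $p'\sim \log(\mathrm{e}+[w]_{A_\infty})$ (so that $\varepsilon p'>1$ is satisfied and $p'^{\,1+\gamma_1}$ supplies the factor $\log^{1+\gamma_1}(\mathrm{e}+[w]_{A_\infty})$) produces the announced inequality. The main obstacle is the auxiliary sparse-form estimate: engineering the two H\"older splits (one inside $\langle \chi_E w\rangle_{r,Q}$, another at the integral level) so that $[w]_{A_1}$, $[w]_{A_\infty}$, $w(E)^{1/p'}$, and $\|\Psi(|f|)\|_{L^1(w)}^{1/p}$ each land at their correct powers; a secondary issue is verifying that the growth hypothesis on $\Psi$ transfers cleanly to $L^{p-1}$ averages without incurring extra losses when $p$ is close to $1$.
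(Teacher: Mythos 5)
The paper does not actually prove Theorem~\ref{thm4.1}; it refers to Corollary~3.3 of \cite{hulai} and says the proof is ``fairly similar''. So I can only assess your proposal on its internal coherence, and there the difficulties you flag at the end are not cosmetic: two of the load-bearing steps do not go through with the parameters you choose.

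First, the condition $\varepsilon p'>1$ is incompatible with your exponent choices. The sharp reverse H\"older exponent for $w\in A_\infty$ is $s=1+c/[w]_{A_\infty}$, so if you want $\langle w\rangle_{r,Q}\lesssim\langle w\rangle_Q$ and a H\"older split $\langle\chi_E w\rangle_{r,Q}\lesssim(|Q\cap E|/|Q|)^\varepsilon\langle w\rangle_Q$, you are forced to take $r<s$ and $\varepsilon=\tfrac1r-\tfrac1s\lesssim\frac{1}{[w]_{A_\infty}}$. With $p'\sim\log(\mathrm e+[w]_{A_\infty})$ you then get $\varepsilon p'\sim\log(\mathrm e+[w]_{A_\infty})/[w]_{A_\infty}\to 0$, not $>1$. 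To force $\varepsilon p'>1$ you would need $p'\gtrsim[w]_{A_\infty}$, and then the factor $p'^{\,1+\gamma_1}$ becomes $[w]_{A_\infty}^{1+\gamma_1}$, which overshoots the claimed $\log^{1+\gamma_1}(\mathrm e+[w]_{A_\infty})$ by a power of the $A_\infty$ constant. There is no choice of $(r,p)$ in your scheme that satisfies $\varepsilon p'>1$ and simultaneously keeps both $r'^{\gamma_2}$ and $p'^{1+\gamma_1}$ at their target sizes.

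Second, the transfer $\langle f\rangle_{\Psi,Q}\lesssim p'^{\gamma_1/(p-1)}\langle f\rangle_{p-1,Q}$ is fatal even if true: since $1/(p-1)\sim p'$ when $p$ is near $1$, the claimed constant is of size $p'^{\gamma_1 p'}$, which is superpolynomial in $p'$ and vastly larger than the factor $p'^{1+\gamma_1}$ you need. (Relatedly, the hypothesis as written, $\Psi(t)\lesssim p'^{\gamma_1}t^{p-1}$ for $t\geq1$, is vacuous for $p<2$ because $\Psi(t)\geq t>t^{p-1}$ at large $t$; one suspects the intended condition is $\Psi(t)/t\lesssim p'^{\gamma_1}t^{p-1}$, i.e.\ $\Psi(t)\lesssim p'^{\gamma_1}t^{p}$, which is what makes the factor $p'^{\gamma_1}$ enter linearly rather than through a $1/(p-1)$ power. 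Your use of $t^{p-1}$ instead of $t^p$ compounds the loss.) What you need is an inequality in which the factor $p'^{\gamma_1}$ appears to the first power after integrating $\Psi(|f|)w$, not after taking a $(p-1)$-th root of the growth bound; your current reduction does not achieve this.

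The big-picture absorption idea and the final numerology ($r'\sim[w]_{A_\infty}$, $p'\sim\log(\mathrm e+[w]_{A_\infty})$) are consistent with the shape of the theorem, so you have identified the right targets, but the intermediate inequality
\[
w(E)\lesssim r'^{\gamma_2}p'^{\,1+\gamma_1}[w]_{A_1}\,w(E)^{1/p'}\Big(\int\Psi(|f|)\,w\Big)^{1/p}
\]
is not established by the steps you give, and in fact cannot be reached through the particular $(M\chi_E)^\varepsilon$--$L^{p'}(w)$ H\"older split once $\varepsilon p'<1$. A correct argument (as in \cite{lpr} and \cite{hulai}) requires a different handling of the $E$--localization -- typically replacing $(M\chi_E)^\varepsilon$ by the weighted level-set estimate $w(\{M\chi_E>t\})\lesssim[w]_{A_1}w(E)/t$ truncated at a $\lambda$ depending on $[w]_{A_\infty}$, or using a stopping-time decomposition of $\mathcal S$ rather than pulling the maximal functions out in one step; and the Orlicz hypothesis must be applied at the level of $\int\Psi(|f|/\alpha)w$ directly (splitting at the threshold $|f|\gtrless\alpha$) rather than by converting $\langle f\rangle_{\Psi,Q}$ to an $L^{p-1}$ average.
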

Theorem \ref{thm4.1} is a generalization of Corollary 3.3 in \cite{hulai}, and the proof is fairly similar to the proof of Corollary 3.3 in \cite{hulai}, we omit the details for brevity.

Obviously,Theorem \ref{dingli1.3}, along with Theorem \ref{thm4.1}, leads to Theorem \ref{dingliweak} directly.

Finally,  we consider the maximal operator associated with the commutator of the operator $T_{\Omega}$ in (\ref{eq1.1}).  Let $b\in {\rm BMO}(\mathbb{R}^d)$. Define the commutator of $T_{\Omega}$ and $b$ by
$$[b,\,T_{\Omega}]f(x)=bT_{\Omega}f(x)-T_{\Omega}(bf)(x).$$
The quantitative weighted estimates for $[b,\,T_{\Omega}]$ were considered by many authors, see \cite{lor,prr,riv, lantaohu} and related references therein. $[b,\,T_{\Omega}]^*$, the maximal operator associated with $[b,\,T_{\Omega}]$, is defined by
$$[b,\,T_{\Omega}]^*f(x)=\sup_{\epsilon>0}\Big|\int_{|x-y|>\epsilon}\frac{\Omega(x-y)}{|x-y|^d}
(b(x)-b(y))f(y)dy\Big|.$$
Using estimate (\ref{eq3.3})-(\ref{equation3.4}), and repeating the proof of Theorem 2.7 in \cite{lantaohu}, we deduce that
\begin{theorem}\label{thm3.4}Let $\Omega$ be homogeneous of degree zero, have mean value zero on $S^{d-1}$, and $\Omega\in L^{\infty}(S^{d-1})$.   Then for each $r\in (1,\,\infty)$ and bounded function $f$, there exists a sparse family $\mathcal{S}$, such that for all bounded function $g$ with compact support,
\begin{eqnarray*}&&\Big|\int_{\mathbb{R}^d}g(x)[b,\,T_{\Omega}]^*f(x)dx\Big|\\
&&\quad\lesssim  \|\Omega\|_{L^{\infty}(S^{d-1})}\big(r'^2\mathcal{A}_{\mathcal{S},\,L^{1},L^r}(f,\,g)+
r'\mathcal{A}_{\mathcal{S},L^{\Phi},L^r}(f,g)\big)\nonumber\\
&&\qquad+\|\Omega\|_{L^{\infty}(S^{d-1})}\big(r'\mathcal{A}_{\mathcal{S},\,L^{\Psi_1},L^r}(f,\,g)+
\mathcal{A}_{\mathcal{S},L^{\Psi_2},L^r}(f,g)\big),\nonumber
\end{eqnarray*}
where and in the following, $\Psi_1(t)=t\log ({\rm e}+t)$, $\Psi_2(t)=t\log ({\rm e}+t)\log\log ({\rm e}^2+t)$.
\end{theorem}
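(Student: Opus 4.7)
The plan is to mimic the commutator sparse-domination argument of Theorem 2.7 of \cite{lantaohu}, feeding in the sparse estimate of Theorem \ref{dingli1.3} together with the weak-type endpoint bounds \eqref{eq3.3}-\eqref{equation3.4} in place of their counterparts for $T_{\Omega}$. The core observation is the pointwise commutator identity, valid for any constant $c\in\mathbb{R}$ and uniformly in $\epsilon>0$,
\begin{eqnarray*}
T_{\Omega,\epsilon}((b(x)-b)f)(x)=(b(x)-c)T_{\Omega,\epsilon}f(x)-T_{\Omega,\epsilon}((b-c)f)(x),
\end{eqnarray*}
which after taking $\sup_{\epsilon>0}$ yields $[b,T_{\Omega}]^{*}f(x)\le|b(x)-c|\,T_{\Omega}^{*}f(x)+T_{\Omega}^{*}((b-c)f)(x).$ Each application of this splitting converts one factor of $[b,T_{\Omega}]^{*}$ into a factor of $T_{\Omega}^{*}$, which is already sparse-dominated.

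Next, I would build the sparse family $\mathcal{S}$ by the same stopping-time scheme used in the proof of Theorem \ref{dingli1.3} (following the template of \cite{ler5,hulai}), but adding stopping conditions for the four Orlicz averages $\langle f\rangle_{\Psi_{1},Q}$, $\langle f\rangle_{\Psi_{2},Q}$, $\langle(b-b_{Q})f\rangle_{1,Q}$, $\langle(b-b_{Q})f\rangle_{\Phi,Q}$ together with the $L^{r}$-mean of $g$ and the Lerner-type maximal function $\mathscr{M}_{r,T_{\Omega}^{*}}$ applied to $f$. On each good set $E_{Q}=Q\setminus\bigcup_{Q'\in\mathrm{ch}_{\mathcal{S}}(Q)}Q'$ we then have the pointwise inequality with $c=b_{Q}$, and testing against $g$ gives
\begin{eqnarray*}
\int_{E_{Q}}g\,[b,T_{\Omega}]^{*}f\,dx\le\int_{E_{Q}}|b-b_{Q}|\,g\,T_{\Omega}^{*}f\,dx+\int_{E_{Q}}g\,T_{\Omega}^{*}((b-b_{Q})f)\,dx.
\end{eqnarray*}

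For the second integral, one application of Theorem \ref{dingli1.3} (to the input $(b-b_{Q})f\chi_{3Q}$ and tester $g$) produces the terms $r'\langle(b-b_{Q})f\rangle_{1,Q}\langle g\rangle_{r,Q}|Q|$ and $\langle(b-b_{Q})f\rangle_{\Phi,Q}\langle g\rangle_{r,Q}|Q|$. The John-Nirenberg inequality $\|b-b_{Q}\|_{\exp L,Q}\lesssim\|b\|_{\mathrm{BMO}}$ and the generalized Orlicz-H\"older inequality with the complementary Young pairs $(L^{1},\exp L)$ and $(L^{\Phi},\exp L)$ yield
\begin{eqnarray*}
\langle(b-b_{Q})f\rangle_{1,Q}&\lesssim&\|b\|_{\mathrm{BMO}}\,\langle f\rangle_{\Psi_{1},Q},\\
\langle(b-b_{Q})f\rangle_{\Phi,Q}&\lesssim&\|b\|_{\mathrm{BMO}}\,\langle f\rangle_{\Psi_{2},Q},
\end{eqnarray*}
where the second bound uses that the product of the Young functions $t\mapsto t\log\log(\mathrm{e}^{2}+t)$ and $\mathrm{e}^{t}-1$ has inverse comparable to $\Psi_{2}$. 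This delivers the $r'\mathcal{A}_{\mathcal{S},L^{\Psi_{1}},L^{r}}(f,g)$ and $\mathcal{A}_{\mathcal{S},L^{\Psi_{2}},L^{r}}(f,g)$ pieces. For the first integral, write $|b-b_{Q}|g$ as a single tester; Theorem \ref{dingli1.3} applied to $f$ and $|b-b_{Q}|g$ gives sparse expressions $r'\langle f\rangle_{1,Q'}\langle|b-b_{Q}|g\rangle_{r,Q'}|Q'|$ and $\langle f\rangle_{\Phi,Q'}\langle|b-b_{Q}|g\rangle_{r,Q'}|Q'|$ along subcubes $Q'\subset Q$, and John-Nirenberg together with H\"older costs an extra factor $r'$ to return to $\langle g\rangle_{r,Q'}$, yielding the $r'^{2}\mathcal{A}_{\mathcal{S},L^{1},L^{r}}$ and $r'\mathcal{A}_{\mathcal{S},L^{\Phi},L^{r}}$ pieces.

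The main obstacle is the bookkeeping needed to ensure that a single sparse family $\mathcal{S}$ simultaneously controls all four Orlicz stopping conditions (including the mixed $\langle(b-b_{Q})f\rangle_{\Phi,Q}$ one) and that the cost of transferring $|b-b_{Q}|$ from the $g$ slot to a numerical factor is exactly one additional $r'$, which is what accounts for the characteristic $r'^{2}$ appearing in the first term. Once these stopping conditions are arranged and the pointwise identity is combined with the $\sup_{\epsilon}$, the telescoping over $\mathcal{S}$ and the two applications of Theorem \ref{dingli1.3} --- one for each summand of the commutator splitting --- assemble into the four-term sparse bound stated in Theorem \ref{thm3.4}, exactly as in the template of \cite[Theorem 2.7]{lantaohu}.
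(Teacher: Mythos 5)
Your reconstruction is essentially the argument the paper intends, which it delegates to \cite[Theorem~2.7]{lantaohu}: a one-step commutator splitting $[b,T_{\Omega}]^{*}f\leq |b-b_{Q}|T_{\Omega}^{*}f+T_{\Omega}^{*}((b-b_{Q})f)$ on good sets, a stopping-time construction (as in \cite{ler5,hulai}) enlarged with the extra Orlicz averages, Orlicz--H\"older with $\exp L$ (John--Nirenberg) to convert $\langle(b-b_{Q})f\rangle_{1,Q}$ and $\langle(b-b_{Q})f\rangle_{\Phi,Q}$ into $\langle f\rangle_{\Psi_{1},Q}$ and $\langle f\rangle_{\Psi_{2},Q}$, and the weak-type endpoint inputs \eqref{eq3.3}--\eqref{equation3.4} feeding the construction. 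The Young-function bookkeeping you check (that $(\exp L)^{-1}\cdot\Psi_{2}^{-1}\simeq\Phi^{-1}$ and $(\exp L)^{-1}\cdot\Psi_{1}^{-1}\simeq t$) and the factor accounting (the first term of the splitting contributing $r'^{2}$ and $r'$, the second contributing $r'$ and $1$) are correct.

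Two technical remarks on the packaging. First, the paper's intent is to re-run the \emph{iterative} sparse construction using the endpoint estimates \eqref{eq3.3}--\eqref{equation3.4} directly (so that a single sparse family handles both pieces of the splitting and the cube-dependent constants $b_{Q}$ are reset at each generation), rather than to apply Theorem~\ref{dingli1.3} as a black box twice and then merge families; your description is a hybrid of the two, and it is worth stating clearly that you perform one stopping-time pass with all the stopping conditions and get one $\mathcal{S}$. Second, the step ``John--Nirenberg together with H\"older costs an extra factor $r'$ to return to $\langle g\rangle_{r,Q'}$'' requires a small adjustment: one cannot pull $|b-b_{Q}|$ out of $\langle|b-b_{Q}|g\rangle_{r,Q'}$ by H\"older and land back on the same $L^{r}$ average. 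Instead one runs the construction at an intermediate exponent $\tilde r\in(1,r)$ (e.g.\ $\tilde r=(1+r)/2$), so that $1/\tilde r=1/r+1/q$ with $q\lesssim r'$ and $\tilde r'\lesssim r'$, and then $\langle|b-b_{Q}|g\rangle_{\tilde r,Q'}\lesssim q\|b\|_{\mathrm{BMO}}\langle g\rangle_{r,Q'}$ gives the advertised $r'^{2}$ and $r'$ factors; for $r$ bounded away from $1$ the factors are $O(1)$ and the claim is immediate by monotonicity of $\langle g\rangle_{r,Q'}$ in $r$. With these clarifications the argument matches the intended proof.
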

Theorem \ref{thm3.4}, along with Theorem \ref{thm4.1}, leads to the following corollary.
\begin{corollary}\label{cor3.1}
Let $\Omega$ be homogeneous of degree zero, have mean value zero on $S^{d-1}$, and $\Omega\in L^{\infty}(S^{d-1})$. Let $b\in {\rm BMO}(\mathbb{R}^d)$. Then for all $\alpha>0$,
\begin{eqnarray*}
&&w\big(\{x\in\mathbb{R}^d:\,[b,\,T_{\Omega}]^*f(x)>\alpha\}\big)\\
&&\quad\lesssim [w]_{A_1}[w]^2_{A_{\infty}}\log ({\rm e}+[w]_{A_{\infty}})
\int_{\mathbb{R}^d}\Psi_2\big(\frac{|f(x)|}{\alpha}\big)w(x)dx.\nonumber
\end{eqnarray*}
\end{corollary}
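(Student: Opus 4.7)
The plan is to apply Theorem~\ref{thm4.1} separately to each of the four sparse pieces produced by Theorem~\ref{thm3.4} and then to sum the resulting weighted weak-type bounds. Theorem~\ref{thm3.4} supplies, for every $r\in(1,\infty)$, a sparse family $\mathcal{S}=\mathcal{S}(f,r)$ such that $|\langle g,[b,T_\Omega]^*f\rangle|$ is dominated by $\|\Omega\|_{L^\infty(S^{d-1})}$ times the sum of four sparse bilinear forms whose $f$-side Young functions are $L^1,L^\Phi,L^{\Psi_1},L^{\Psi_2}$ and whose outer coefficients are $r'^2,r',r',1$, respectively. Because the weighted measure of a level set is subadditive, I would prove the desired weak-type bound separately for the sublinear operator $U_i$ associated with the $i$-th sparse form, $i=1,2,3,4$.

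For the $i$-th form the plan is to invoke Theorem~\ref{thm4.1} with Young function $\Psi_{(i)}\in\{t,\Phi,\Psi_1,\Psi_2\}$ and parameter $\gamma_2^{(i)}\in\{2,1,1,0\}$ matching the exponent of $r'$ in the outer coefficient. The companion parameter $\gamma_1^{(i)}$ will be read off from the growth of $\Psi_{(i)}$ via the elementary inequality $\log t\leq (p-1)^{-1}t^{p-1}$ valid for $t\geq 1$; I expect $\gamma_1^{(i)}=0$ for $\Psi_{(i)}\in\{t,\Phi\}$ (the iterated logarithm in $\Phi$ being absorbable into any positive power of $p'$) and $\gamma_1^{(i)}=1$ for $\Psi_{(i)}\in\{\Psi_1,\Psi_2\}$. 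Theorem~\ref{thm4.1} then yields
\[ w\bigl(\{|U_if|>\alpha\}\bigr)\lesssim [w]_{A_1}[w]_{A_\infty}^{\gamma_2^{(i)}}\log^{1+\gamma_1^{(i)}}(e+[w]_{A_\infty})\int_{\mathbb{R}^d}\Psi_{(i)}\bigl(\tfrac{|f(x)|}{\alpha}\bigr)w(x)\,dx. \]

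To finish, I will absorb each of the four contributions into the target constant $[w]_{A_1}[w]_{A_\infty}^2\log(e+[w]_{A_\infty})$ times $\int\Psi_2(|f|/\alpha)w\,dx$. The integrand side uses the pointwise chain $t\lesssim\Phi(t)\lesssim\Psi_1(t)\lesssim\Psi_2(t)$. The constant side reduces to verifying $\log^{\gamma_1^{(i)}}(e+[w]_{A_\infty})\lesssim[w]_{A_\infty}^{2-\gamma_2^{(i)}}$, which holds for every $i\in\{1,2,3,4\}$ because $[w]_{A_\infty}\geq 1$ and $\log(e+x)\lesssim x$ for $x\geq 1$; in particular the triples $(\gamma_2^{(i)},\gamma_1^{(i)})=(2,0),(1,0),(1,1),(0,1)$ all satisfy $\gamma_1^{(i)}\leq 2-\gamma_2^{(i)}$, which is the only inequality needed.

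The main obstacle will be the verification of the growth hypothesis $\Psi(t)\lesssim p'^{\gamma_1}t^{p-1}$ for $t\geq 1$ demanded by Theorem~\ref{thm4.1} at each of the four Young functions, most delicately for $\Psi_2$, whose double logarithm has to be absorbed into a single power of $p'$ uniformly over the range of $p$ implicit in the optimization carried out inside Theorem~\ref{thm4.1}. Once those growth estimates are in hand, the remainder of the argument is the bookkeeping sketched above.
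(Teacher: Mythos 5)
Your approach is the intended one: combine Theorem~\ref{thm3.4} with Theorem~\ref{thm4.1} piece by piece, dominate all four Young-function integrands by $\Psi_2$, and absorb the surplus logarithmic factors using $\log(\mathrm{e}+[w]_{A_\infty})\lesssim[w]_{A_\infty}$. The paper itself gives no more than the one-line assertion that these two theorems yield the corollary, so your plan fills in the same argument.

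Two small corrections are needed in your bookkeeping. First, your $\gamma_1$ assignments for the $\Phi$- and $\Psi_2$-pieces are too optimistic. For $\Phi(t)=t\log\log(\mathrm{e}^2+t)$ one cannot take $\gamma_1=0$: the estimate $\log\log(\mathrm{e}^2+t)\lesssim\log p'+t^{p-1}$ forces $\gamma_1>0$, and the natural choice is $\gamma_1=1$. For $\Psi_2(t)=t\log(\mathrm{e}+t)\log\log(\mathrm{e}^2+t)$, testing at $p-1\approx 1/\log t$ shows that $\gamma_1=1$ fails (one sees $p'\log p'$ on the left against $\mathrm{e}\,p'^{\gamma_1}$ on the right), so one needs $\gamma_1>1$, say $\gamma_1=2$. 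With these corrected values, $(\gamma_2,\gamma_1)=(2,0),(1,1),(1,1),(0,2)$, so the inequality you isolate, $\gamma_1\le 2-\gamma_2$, still holds with equality in three of the four cases and the target constant $[w]_{A_1}[w]_{A_\infty}^2\log(\mathrm{e}+[w]_{A_\infty})$ is reached; the corollary's statement is unaffected. Second, the phrase ``sublinear operator $U_i$ associated with the $i$-th sparse form'' is not literally meaningful: the four sparse forms are a decomposition of the \emph{bilinear pairing}, not of $[b,T_\Omega]^*$ itself, so one cannot split the level set $\{[b,T_\Omega]^*f>\alpha\}$ accordingly. What one should say instead is that the proof of Theorem~\ref{thm4.1} (via Corollary 3.3 of \cite{hulai}) tests $\langle g,Uf\rangle$ against a fixed $g$ tied to the level set and is linear in the sparse-form hypothesis, so it applies verbatim to any finite sum $\sum_i r'^{\gamma_2^{(i)}}\mathcal{A}_{\mathcal{S},L^{\Psi_{(i)}},L^r}$ with the stated additive conclusion.
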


As far as we knon, Corollary \ref{cor3.1} is new even for the case $w\equiv 1$.
%{\bf Declarations}

\bibliographystyle{amsplain}

\end{document}